\begin{document}
\newcommand {\emptycomment}[1]{} 

\baselineskip=15pt
\newcommand{\nc}{\newcommand}
\newcommand{\delete}[1]{}
\nc{\mfootnote}[1]{\footnote{#1}} 
\nc{\todo}[1]{\tred{To do:} #1}

\nc{\mlabel}[1]{\label{#1}}  
\nc{\mcite}[1]{\cite{#1}}  
\nc{\mref}[1]{\ref{#1}}  
\nc{\meqref}[1]{\eqref{#1}} 
\nc{\mbibitem}[1]{\bibitem{#1}} 

\delete{
\nc{\mlabel}[1]{\label{#1}  
{\hfill \hspace{1cm}{\bf{{\ }\hfill(#1)}}}}
\nc{\mcite}[1]{\cite{#1}{{\bf{{\ }(#1)}}}}  
\nc{\mref}[1]{\ref{#1}{{\bf{{\ }(#1)}}}}  
\nc{\meqref}[1]{\eqref{#1}{{\bf{{\ }(#1)}}}} 
\nc{\mbibitem}[1]{\bibitem[\bf #1]{#1}} 
}

\newcommand {\comment}[1]{{\marginpar{*}\scriptsize\textbf{Comments:} #1}}
\nc{\mrm}[1]{{\rm #1}}
\nc{\id}{\mrm{id}}  \nc{\Id}{\mrm{Id}}
\nc{\admset}{\{\pm x\}\cup (-x+K^{\times}) \cup K^{\times} x^{-1}}

\def\a{\alpha}
\def\admt{admissible to~}
\def\ad{associative D-}
\def\asi{ASI~}
\def\aybe{aYBe~}
\def\b{\beta}
\def\bd{\boxdot}
\def\bbf{\overline{f}}
\def\bF{\overline{F}}
\def\bg{\overline{g}}
\def\bG{\overline{G}}
\def\bT{\overline{T}}
\def\bt{\overline{t}}
\def\bR{\overline{R}}
\def\br{\overline{r}}
\def\bu{\overline{u}}
\def\bU{\overline{U}}
\def\bw{\overline{w}}
\def\bW{\overline{W}}
\def\btl{\blacktriangleright}
\def\btr{\blacktriangleleft}
\def\calo{\mathcal{O}}
\def\ci{\circ}
\def\d{\delta}
\def\dd{\diamondsuit}
\def\D{\Delta}
\def\frakB{\mathfrak{B}}
\def\G{\Gamma}
\def\g{\gamma}
\def\k{\kappa}
\def\l{\lambda}
\def\lh{\leftharpoonup}
\def\lr{\longrightarrow}
\def\N{Nijenhuis~}
\def\o{\otimes}
\def\om{\omega}
\def\opa{\cdot_{A}}
\def\opb{\cdot_{B}}
\def\p{\psi}
\def\sadm{$S$-admissible~}
\def\r{\rho}
\def\ra{\rightarrow}
\def\rh{\rightharpoonup}
\def\rr{r^{\#}}
\def\s{\sigma}
\def\st{\star}
\def\ti{\times}
\def\tl{\triangleright}
\def\tr{\triangleleft}
\def\v{\varepsilon}
\def\vp{\varphi}
\def\vth{\vartheta}

\newtheorem{thm}{Theorem}[section]
\newtheorem{lem}[thm]{Lemma}
\newtheorem{cor}[thm]{Corollary}
\newtheorem{pro}[thm]{Proposition}
\theoremstyle{definition}
\newtheorem{defi}[thm]{Definition}
\newtheorem{ex}[thm]{Example}
\newtheorem{rmk}[thm]{Remark}
\newtheorem{pdef}[thm]{Proposition-Definition}
\newtheorem{condition}[thm]{Condition}
\newtheorem{question}[thm]{Question}
\renewcommand{\labelenumi}{{\rm(\alph{enumi})}}
\renewcommand{\theenumi}{\alph{enumi}}

\title{Nijenhuis operators and mock-Lie bialgebras}

 \author[Ma]{Tianshui Ma }
 \address{School of Mathematics and Information Science, Henan Normal University, Xinxiang 453007, China}
         \email{matianshui@htu.edu.cn}

 \author[Mabrouk]{Sami Mabrouk}
 \address{University of Gafsa, Faculty of Sciences, 2112 Gafsa, Tunisia}

         \email{mabrouksami00@yahoo.fr, sami.mabrouk@fsgf.u-gafsa.tn}

 \author[Makhlouf]{Abdenacer Makhlouf\textsuperscript{*}}
 \address{Universit{\'e} de Haute Alsace, IRIMAS-D\'epartement  de Math{\'e}matiques,18  rue des Fr{\`e}res Lumi{\`e}re F-68093 Mulhouse, France}

         \email{abdenacer.makhlouf@uha.fr}

 \author[Song]{Feiyan Song}
 \address{School of Mathematics and Information Science, Henan Normal University, Xinxiang 453007, China}
         \email{songfeiyan@stu.htu.edu.cn}

 \thanks{\textsuperscript{*}Corresponding author}

\date{\today}

 \begin{abstract} A Nijenhuis mock-Lie algebra is a mock-Lie algebra equipped with a Nijenhuis operator. The purpose of this paper is  to extend the well-known results about Nijenhuis mock-Lie algebras to the realm of mock-Lie bialgebras. It aims to  characterize Nijenhuis mock-Lie bialgebras by generalizing the concepts of matched pairs and Manin triples of mock-Lie algebras to the context of Nijenhuis mock-Lie algebras. Moreover, we discuss formal deformation theory and explore  infinitesimal  formal deformations of Nijenhuis mock-Lie algebras, demonstrating that the associated cohomology corresponds to a deformation cohomology. Moreover, we define  abelian extensions of Nijenhuis mock-Lie algebras and show that equivalence classes of such extensions are linked to cohomology groups. The coboundary case leads to the introduction of an admissible mock-Lie-Yang-Baxter equation (mLYBe) in Nijenhuis mock-Lie algebras, for which the antisymmetric solutions give rise to Nijenhuis mock-Lie bialgebras. Furthermore, the notion of $\mathcal O$-operator on Nijenhuis mock-Lie algebras is  introduced and connected to mock-Lie-Yang-Baxter equation.\end{abstract} 
 \subjclass[2020]{
16T10,   
16T25,   
17B38,  
16W99,  
17B62.   
}

\keywords{Nijenhuis operator, mock-Lie algebra, mock-Lie bialgebra, deformation, abelian extension, coboundary, $\mathcal{O}$-operator.}

\maketitle


 \tableofcontents


\allowdisplaybreaks

\section{Introduction}
\subsection{Mock-Lie algebras}

In the world of non-associative algebras, {\bf mock-Lie algebras} 
are rather special objects. A mock-Lie algebra is a vector space $\mathcal{A}$
together with a commutative bilinear map $[~,~] : \mathcal{A} \times \mathcal{A} \to \mathcal{A}$ such
that 
 \begin{eqnarray}
 &[x, y]=[y, x],&\label{eq:1-1}\\
 &[x, [y, z]]+[y, [z, x]]+[z, [x, y]]=0,\mlabel{eq:i}&
 \end{eqnarray}
 where $x, y, z\in \mathcal{A}$.
A detailed history of mock-Lie algebras and  several conjectures are presented in \cite{zu2}. This class of algebras was first introduced in \cite{Zh}. Since then, they have been independently studied in various works \cite{BB0, BB, BB1, BB2, BF, Braiek, Camacho, GK, Ok, wa, Buse, Zh2} under different names, including Lie-Jordan algebras, Jordan algebras of nilindex 3, pathological algebras, mock-Lie algebras, or Jacobi-Jordan algebras. In this paper, we will use the term mock-Lie algebra.

\subsection{Deformations, Nijenhuis operators and abelian extensions} 

The theory of deformations of algebraic structures was first introduced by Gerstenhaber for associative algebras \cite{Gerst}, and later extended to Lie algebras by Nijenhuis and Richardson \cite{NijenhuisRichardson,NR,NR1}. The core idea is to take a non-associative algebra 
$(A,\mu)$ and define a new multiplication 
$\mu_t$  on the formal space 
$A[[t]]$ by incorporating additional terms in powers of the formal parameter $t$ into $\mu$. The new multiplication $\mu_t$ will then be of the form:
\begin{equation*}
    \mu_t=\mu +\sum_{i \geq 1}\mu_it^i,
\end{equation*}
where $\mu_i:A\otimes A \rightarrow A$ are bilinear maps. Depending on the structure we wish to have on $A[[t]]$, certain conditions are necessary on the $\mu_i$ applications. An infinitesimal  deformation $(t^2=0)$ is said to be trivial if there exists a linear map $N : A\to A$ such
that for all $t$, $T_t= id + tN $ satisfies $T_t(\mu_t(a,b))=\mu(T_t(a),T_t(b))$. Thus $N$ satisfying, for all $a,b\in A$, $$\mu(N(a), N(y))=N(\mu(N( a), b))+N(\mu(a, N(b))-N(\mu(a, b))).$$
Such a  linear map $N$ is called Nijenhuis operator.  Dorfman \cite{Dorfman1993} examined Nijenhuis operators through the deformation of Lie algebras. Additionally, these operators are significant in the context of the integrability of nonlinear evolution equations \cite{Dorfman1993}. Dorfman's introduction of Dirac structures provided new insights into existing Nijenhuis frameworks. In 2004, Gallardo and Nunes \cite{CN04} introduced the concept of Dirac Nijenhuis structures, while Longguang and Baokang \cite{LB04} independently developed Dirac Nijenhuis manifolds that same year. In 2011, Kosmann-Schwarzbach \cite{kos} investigated Dirac Nijenhuis structures within Courant algebroids. A concept of a Nijenhuis operator on a 3-Lie algebra was proposed in \cite{zhang} to explore first-order deformations of such algebras. In \cite{Wang}, the authors defined a Nijenhuis operator for pre-Lie algebras that leads to a trivial deformation of the algebra. Das and Sen \cite{DS} analyzed Nijenhuis operators on Hom-Lie algebras from a cohomological perspective. For further research on Nijenhuis operators across various algebraic structures, see \cite{LSZB, HCM}.

Non-abelian extensions were first introduced by Eilenberg and Maclane \cite{nonC3}, leading to the development of low-dimensional non-abelian cohomology groups. These extensions serve as valuable mathematical tools for examining the underlying structures. Among the various types of extensions as central, abelian, and non-abelian, the non-abelian extension is particularly general. As a result, many studies have concentrated on abelian extensions of diverse algebraic structures, including Lie (super)algebras, Leibniz algebras, Lie $2$-algebras, Lie Yamaguti algebras, Rota-Baxter groups, Rota-Baxter Lie algebras, and Rota-Baxter Leibniz algebras (see \cite{nab6, DR, GKM, nab13, nab10, nonC2}). For the cohomology and deformations of mock-Lie algebras, we refer to \cite{BB2}.

\subsection{Bialgebras} A bialgebra structure is obtained from a corresponding set of comultiplication together with a set of compatibility conditions between the multiplication and the comultiplication. For example, for a finite-dimensional vector space $V$ with a given algebraic structure, this can be achieved by equipping the dual space $V^*$ with the same algebraic
structure and a set of compatibility conditions between the structures on $V$ and those on $V^*$.
One important reason for the usefulness of the Lie bialgebra is that it has a coboundary theory, which
leads to the construction of Lie bialgebras from solutions of the classical Yang-Baxter equation.
The origin of the Yang-Baxter-equations is purely physics. They were first introduced by Baxter, and Yang in \cite{R.J1,R.J2, C.N}.
The notion of mock-Lie bialgebra was introduced in \cite{BCHM} as an equivalent to a Manin triple of mock-Lie algebras (see also \cite{Karima, Ismail}). The authors also studied a special case called coboundary mock-Lie
bialgebra which leads to the introduction  of the mock-Lie Yang-Baxter equation on a mock-Lie algebra as an analogue of the classical Yang-Baxter equation on a Lie algebra.
\subsection{Layout of the paper}
The paper is organized as follows: In Section \ref{Sec2},  we introduce the notion of  Nijenhuis mock-Lie algebra and provide some relevant  definitions and properties. Moreover, we revisit mock-Lie bialgebras and discuss classical mock-Lie Yang-Baxter equation and (dual)quasitriangular mock-Lie bialgebras. Furthermore, we recall the construction Symplectic mock-Lie algebras from dual quasitriangular mock-Lie bialgebras. Then, we show that they lead to Nijenhuis mock-Lie algebras. Section \ref{Sec3} is dedicated to Deformations and extensions of Nijenhuis mock-Lie algebras. We study infinitesimal deformations and abelian extensions of  Nijenhuis mock-Lie algebras. In Section \ref{Sec4}, we introduce and study Matched pairs and Manin triples of Nijenhuis mock-Lie algebras. Therefore, we show their relationship with Nijenhuis mock-Lie bialgebras. The class of Coboundary Nijenhuis mock-Lie bialgebras is also considered through $S$-adjoint-admissible Nijenhuis mock-Lie algebras,  $S$-admissible mock-Lie-Yang-Baxter equation and $\mathcal O$-operators.

 \noindent{\bf Notations:} Throughout this paper, all vector spaces, tensor products, and linear homomorphisms are over a field
$K$ of characteristic zero. We denote by $\id_M$ the identity map from $M$ to $M$.

\section{From mock-Lie bialgebras to Nijenhuis mock-Lie algebras}\label{Sec2}
In this section, We introduce the concept of Nijenhuis mock-Lie algebras and present key definitions and properties. Additionally, we revisit mock-Lie bialgebras (see \cite{BCHM}), exploring the classical mock-Lie Yang-Baxter equation and (dual) quasitriangular mock-Lie bialgebras. Furthermore, we recall the construction of symplectic mock-Lie algebras derived from dual quasitriangular mock-Lie bialgebras. We demonstrate how these constructions give rise to Nijenhuis mock-Lie algebras.
\subsection{Nijenhuis mock-Lie algebras}

 \begin{defi}[\cite{BCHM}]\label{de:ho} Let $(\mathcal{A},[~,~])$ be a mock-Lie algebra, $V$ be a vector space and $\rho : \mathcal{A}\longrightarrow  End(V)$ be a linear map. The pair $(V,\rho)$ is called a {\bf representation} of $\mathcal{A}$ if for all $x,y \in \mathcal{A}, v\in V$
 \begin{eqnarray}\label{eq:kp}
 \rho([x,y])(v)=-\rho(x)(\rho(y)v)-\rho(y)(\rho(x)v).
 \end{eqnarray}
 \end{defi}

 \begin{rmk}[\cite{BCHM}]\label{rmk:gho} Let $(\mathcal{A},[~,~])$ be a mock-Lie algebra, $V$ be a vector space and $\rho : \mathcal{A}\longrightarrow End(V)$ be a linear map. The pair $(V,\rho)$ is a representation of $\mathcal{A}$ if and only if the direct sum $\mathcal{A}\oplus V$ of vector spaces is turned into a mock-Lie algebra by defining multiplication on $\mathcal{A}\oplus V$ by
 \begin{eqnarray}\label{eq:m}
 [x+u,y+v]_\star=[x,y]+\rho(x)v+\rho(y)u, \forall x,y\in \mathcal{A}, u,v\in V.
 \end{eqnarray}
 We denote this mock-Lie algebra by $\mathcal{A}{\ltimes_\rho} V$.
 \end{rmk}

\delete{\begin{proof}
 For all $ x,y,z \in \mathcal{A}$   and  $u,v,w \in V$, we have
\begin{eqnarray*}
&&[x+u,[y+v,z+w]_\star]_\star+[y+v,[z+w,x+u]_\star]_\star+[z+w,[x+u,y+v]_\star]_\star\\
&\stackrel {(\ref{eq:m})}{=}&[x+u,[y,z]+\rho(y)w+\rho(z)v]_\star+[y+v,[z,x]+\rho(z)u+\rho(x)w]_\star+[z+w,[x,y]+\rho(x)v+\rho(y)u]_\star\\,      
&=&[x,[y,z]]+\rho(x)(\rho(y)w+\rho(z)v)+\rho([y,z])u+[y,[z,x]]+\rho(y)(\rho(z)u+\rho(x)w)+\rho([z,x])v\\
&&+[z,[x,y]]+\rho(z)(\rho(x)v+\rho(y)u)+\rho([x,y])w
\end{eqnarray*}
Hence, $(V,\rho)$ is a module over mock-Lie algebra $\mathcal{A}$ if and only if $\mathcal{A}\oplus V$ is a mock-Lie algebra.
\end{proof}
}

 \begin{lem}\label{lem:bgu} Let $\mu:\mathcal{A}\otimes \mathcal{A}\longrightarrow \mathcal{A}$, $\rho: \mathcal{A}\longrightarrow End(V)$ and $m:\mathcal{A}\otimes \mathcal{A}\longrightarrow \mathcal{A}$, $\varphi: \mathcal{A}\longrightarrow End(V)$ be linear maps (we write $\mu(x\otimes y)=[x,y]$, $m(x\otimes y)=[x,y]_\cdot$), $s, t$ be parameters. Consider the following new multiplication and linear map:
 \begin{eqnarray*}
 [x,y]_\circ=s[x,y]+t[x,y]_\cdot,\quad \xi(x)=s\rho(x)+t\varphi(x),~\forall~ x,y\in \mathcal{A}.
 \end{eqnarray*}
 Then
 \begin{enumerate}[(1)]
   \item $(\mathcal{A},[~,~]_\circ)$ is a mock-Lie algebra if and only if $(\mathcal{A}, [~,~]_\cdot)$ is a mock-Lie algebra and for all $x,y,z\in \mathcal{A}$,
   \begin{eqnarray}\label{eq:p4}
   [x,[y,z]]_\cdot+[x,[y,z]_\cdot]+[y,[z,x]]_\cdot+[y,[z,x]_\cdot]+[z,[x,y]]_\cdot+[z,[x,y]_\cdot]=0.
   \end{eqnarray}
   \item $(V,\xi)$ is a representation of $(\mathcal{A},[~,~]_\circ)$ if and only if $(V,\vp)$ is a representation of $(\mathcal{A},[~,~]_\cdot)$ and for all $x,y \in \mathcal{A}$,
   \begin{eqnarray}\label{eq:rft}
   \rho([x,y]_\cdot)+\varphi([x,y])=-\varphi(x)\rho(y)-\rho(x)\varphi(y)-\varphi(y)\rho(x)-\rho(y)\varphi(x).
   \end{eqnarray}
 \end{enumerate}
 \end{lem}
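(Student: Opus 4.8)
The strategy is a direct expansion: substitute the definitions of $[~,~]_\circ$ and $\xi$ into the defining axioms of a mock-Lie algebra and of a representation, collect terms by the monomials $s^2$, $st$, $t^2$, and read off the conditions. Since the field has characteristic zero and $s,t$ are free parameters, each homogeneous component in $s,t$ must vanish separately, which is the mechanism that isolates \eqref{eq:p4} and \eqref{eq:rft}.

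For part (1), commutativity of $[~,~]_\circ$ is automatic from commutativity of $[~,~]$ and $[~,~]_\cdot$, so only the mock-Lie-Jacobi identity \eqref{eq:i} needs checking. I would compute
$[x,[y,z]_\circ]_\circ + [y,[z,x]_\circ]_\circ + [z,[x,y]_\circ]_\circ$
by expanding the inner and outer brackets. The $s^2$-component is exactly the Jacobi identity for $[~,~]$, which holds by hypothesis (as $(\mathcal{A},[~,~])$ is given to be a mock-Lie algebra — here $[~,~] = \mu$ is the fixed structure). The $t^2$-component is the Jacobi identity for $[~,~]_\cdot$, giving one direction of the stated equivalence, and its vanishing is required for the converse. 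The mixed $st$-component, after using commutativity to pair up the six cross terms appropriately, is precisely \eqref{eq:p4}. Thus $(\mathcal{A},[~,~]_\circ)$ is mock-Lie iff both the $t^2$ and $st$ components vanish, i.e. iff $(\mathcal{A},[~,~]_\cdot)$ is mock-Lie and \eqref{eq:p4} holds.

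For part (2), I would substitute $\xi(x) = s\rho(x) + t\varphi(x)$ and $[~,~]_\circ = s[~,~] + t[~,~]_\cdot$ into the representation axiom \eqref{eq:kp}, namely $\xi([x,y]_\circ) = -\xi(x)\xi(y) - \xi(y)\xi(x)$. The left side expands to $s^2\rho([x,y]) + st\big(\rho([x,y]_\cdot) + \varphi([x,y])\big) + t^2 \varphi([x,y]_\cdot)$, and the right side expands to $-s^2(\rho(x)\rho(y)+\rho(y)\rho(x)) - st(\varphi(x)\rho(y)+\rho(x)\varphi(y)+\varphi(y)\rho(x)+\rho(y)\varphi(x)) - t^2(\varphi(x)\varphi(y)+\varphi(y)\varphi(x))$. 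The $s^2$-terms match because $(V,\rho)$ is a representation of $(\mathcal{A},[~,~])$ by hypothesis; the $t^2$-terms matching is equivalent to $(V,\varphi)$ being a representation of $(\mathcal{A},[~,~]_\cdot)$; and the $st$-terms matching is exactly \eqref{eq:rft}. Equating coefficients of the independent powers $s^2, st, t^2$ then yields the claimed equivalence.

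The computation is entirely routine; the only mild subtlety — and the one place I would be careful — is the bookkeeping of the six mixed terms in part (1), making sure the cyclic sum is organized so that the $st$-coefficient collapses to the symmetric six-term expression \eqref{eq:p4} rather than some unsymmetrized variant, which is where commutativity of the two brackets is used. No genuine obstacle is expected.
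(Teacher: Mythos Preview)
Your proposal is correct and matches the paper's approach: the paper records the proof as simply ``Straightforward,'' and the direct expansion in powers of $s,t$ that you outline is exactly the intended computation. Your remark about separating the $s^2$, $st$, $t^2$ components (using that $s,t$ are free parameters) is the right mechanism, and your bookkeeping for both parts is accurate.
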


 \begin{proof}
 Straightforward.
 \end{proof}

 \begin{thm}\label{thm:rl} Let $(V,\rho)$ be a representation $(\mathcal{A},[~,~])$, $(V,\xi)$ be a representation of $(\mathcal{A},[~,~]_\circ)$, $N\in End(\mathcal{A})$ and $\alpha\in End(V)$ be two linear maps. Then $(s\;\id_\mathcal{A}+t N, s\;\id_V+t \alpha)$ is a homomorphism from $(V, \xi)$ of $(\mathcal{A}, [~,~]_\ci)$ to $(V, \rho)$ of $(\mathcal{A}, [~,~])$ if and only if, for all $x,y\in \mathcal{A}$,
 \begin{eqnarray}
 &[x,y]_\cdot=[N(x),y]+[x,N(y)]-N[x,y],&\label{eq:vb}\\
 &N([x,y]_\cdot)=[N(x),N(y)],&\label{eq:yq}\\
 &\varphi(x)=\rho(N(x))+\rho(x) \alpha-\alpha \rho(x),&\label{eq:ljm}\\
 &\rho(N(x)) \alpha=\alpha \varphi(x).&\label{eq:fxm}
 \end{eqnarray}
 \end{thm}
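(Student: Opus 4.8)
The plan is to reduce the homomorphism condition to a polynomial identity in the parameters $s$ and $t$ and then compare coefficients. Recall that the asserted homomorphism is a pair $(f,g)=(s\,\id_\mathcal{A}+tN,\; s\,\id_V+t\alpha)$ in which $f\colon(\mathcal{A},[~,~]_\ci)\to(\mathcal{A},[~,~])$ is a homomorphism of mock-Lie algebras and $g$ intertwines the representations along $f$; equivalently, by Remark~\mref{rmk:gho}, the map $f\oplus g\colon\mathcal{A}\ltimes_\xi V\to\mathcal{A}\ltimes_\rho V$ is a homomorphism of mock-Lie algebras, i.e.
\[
(f\oplus g)\big([x+u,\,y+v]\big)=\big[(f\oplus g)(x+u),\,(f\oplus g)(y+v)\big]\qquad(x,y\in\mathcal{A},\ u,v\in V),
\]
with the semidirect multiplication \meqref{eq:m} understood on each side. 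Projecting this identity onto the $\mathcal{A}$-summand and the $V$-summand, and specializing to $u=0$ (the case $v=0$ being symmetric), shows it is equivalent to the two conditions
\[
f([x,y]_\ci)=[f(x),f(y)],\qquad g(\xi(x)v)=\rho(f(x))\big(g(v)\big)\qquad(x,y\in\mathcal{A},\ v\in V).
\]

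Next I would substitute $[x,y]_\ci=s[x,y]+t[x,y]_\cdot$, $\xi(x)=s\rho(x)+t\varphi(x)$, $f=s\,\id_\mathcal{A}+tN$ and $g=s\,\id_V+t\alpha$ into these two equations and expand. In the first equation the $s^2$-terms cancel identically; treating $s$ and $t$ as generic parameters (the same convention as in Lemma~\mref{lem:bgu}), the vanishing of the coefficient of $st$ is exactly \meqref{eq:vb}, and the vanishing of the coefficient of $t^2$ is exactly \meqref{eq:yq}. Likewise, in the second equation the $s^2$-terms cancel, the coefficient of $st$ reproduces \meqref{eq:ljm} (here the term $\rho(N(x))$ arises from expanding $\rho(f(x))$), and the coefficient of $t^2$ reproduces \meqref{eq:fxm}. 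This settles the ``only if'' direction. For the converse, assuming \meqref{eq:vb}--\meqref{eq:fxm}, one reassembles the $s^2$-, $st$- and $t^2$-contributions to recover the two displayed identities, whence $f\oplus g$ is a homomorphism.

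I expect no conceptual obstacle here; the argument is purely computational once the identity is split into its $\mathcal{A}$-valued and $V$-valued parts. The only points that need a little care are the bookkeeping in the coefficient comparison — in particular keeping the term $\rho(N(x))$ distinct from terms in which $N$ is applied to a bracket, and not conflating the representation-intertwining identity with the bracket-preserving one — together with the (standard) observation that a polynomial identity in the generic parameters $s,t$ is equivalent to the simultaneous vanishing of each of its coefficients.
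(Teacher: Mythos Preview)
Your proposal is correct and follows essentially the same approach as the paper, which simply declares the proof ``Straightforward'': expand both homomorphism conditions $f([x,y]_\ci)=[f(x),f(y)]$ and $g(\xi(x)v)=\rho(f(x))g(v)$ as polynomials in $s,t$ and compare the $st$- and $t^2$-coefficients to obtain \meqref{eq:vb}--\meqref{eq:yq} and \meqref{eq:ljm}--\meqref{eq:fxm} respectively. Your detour through the semidirect product $\mathcal{A}\ltimes V$ is a harmless repackaging that immediately reduces to these same two identities.
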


 \begin{proof}
 Straightforward.
 \end{proof}

\delete{
\begin{proof}
By Eq.$(\mref{eq:pkh})$, for all $x,y\in \mathcal{A}$, we have
\begin{eqnarray*}
&&(sId_\mathcal{A}+tN)[x,y]_\circ =(sId_\mathcal{A}+tN)(s[x,y]+t[x,y]_\cdot)\\
&&=s^2[x,y]+st[x,y]_\cdot+tsN([x,y])+t^2N([x,y]_\cdot)\\
&&=[(sId_\mathcal{A}+tN)(x),(sId_\mathcal{A}+tN)(y)]=[sx+tN(x),sy+tN(y)]\\
&&=s^2[x,y]+st[x,N(y)]+ts[N(x),y]+t^2[N(x),N(y)].
\end{eqnarray*}
From the above equation, we can deduce Eq.$(\mref{eq:vb})$ and $(\mref{eq:yq})$ hold. Similarly, By Eq.$(\mref{eq:rty})$, we have
\begin{eqnarray*}
&&(sId_V+t\alpha)\circ \xi(x)(v)=(sId_V+t\alpha)\circ (s\rho(x)v+t\varphi(x)v)\\
&&=s^2\rho(x)v+st\varphi(x)v+ts\alpha(\rho(x)v)+t^2\alpha(\varphi(x)v)\\
&&=\xi(sId_\mathcal{A}+tN)\circ (sId_V+t\alpha)(v)=\xi(sId_\mathcal{A}+tN)(sv+t\alpha(v))\\
&&=s^2\rho(x)v+st\rho(x)(\alpha(v))+ts\rho(N(x))(v)+t^2\rho(N(x))\alpha(v).
\end{eqnarray*}
\end{proof}
}

 By Eqs.(\mref{eq:vb}) and (\mref{eq:yq}) we have:
 \begin{defi}\label{de:ai} A {\bf \N mock-Lie algebra} is a triple $(\mathcal{A}, [~,~], N)$, where $(\mathcal{A}, [~,~])$ is a mock-Lie algebra and $N:\mathcal{A}\lr \mathcal{A}$ is a \N operator, i.e.,
 \begin{eqnarray}\label{eq:bja}
 [N(x), N(y)]+N^2([x, y])=N([N(x), y])+N([x, N(y)]),\quad \forall~x, y\in \mathcal{A}.
 \end{eqnarray}
 \end{defi}

 By Eqs.(\mref{eq:ljm}) and (\mref{eq:fxm}), one can obtain:
 \begin{defi}\label{de:a} Let $(\mathcal{A}, [~,~], N)$ be a Nijenhuis mock-Lie algebra. A {\bf representation} of $(\mathcal{A},[~,~],N)$ is a triple $(V,\rho,\alpha)$ where $(V,\rho)$ is a representation of $(\mathcal{A},[~,~])$ and $\alpha: V\longrightarrow V$ is a linear map such that, for all $x \in \mathcal{A}, v\in V$,
 \begin{eqnarray}\label{eq:1}
\rho(N(x))\alpha(v)+\alpha^2(\rho(x)v)=\alpha(\rho(N(x))v)+\alpha(\rho(x)\alpha (v)).
 \end{eqnarray}

 A linear map $\phi:V_1\longrightarrow V_2$ is called a {\bf homomorphism from $(V_1,\rho_1,\alpha_1)$ to $(V_2,\rho_2,\alpha_2)$} if, for all $x\in \mathcal{A}, v\in V_1$,
 \begin{eqnarray*}
 \phi(\rho_1 (x)v)=\rho_2(x)\phi(v), \quad \phi(\alpha_1 (v))=\alpha_2(\phi(v)).
 \end{eqnarray*}
 If $\phi$ is bijective, then we say that the representations $(V_1,\rho_1,\alpha_1)$ and $(V_2,\rho_2,\alpha_2)$ are {\bf equivalent}.
 \end{defi}

 \begin{ex}\label{ex:pl}
 Let $(\mathcal{A},[~,~],N)$ be a  Nijenhuis mock-Lie algebra, Then $(\mathcal{A},ad,N)$ is a representation of $(\mathcal{A},[~,~],N)$, called the {\bf adjoint representation} of $(\mathcal{A},[~,~],N)$, where $ad_x(y)=[x,y], \forall~x, y\in \mathcal{A}$.
 \end{ex}

 \begin{pro}\label{pro:B}
 Let $(V,\rho ,\alpha)$ be a representation of a Nijenhuis mock-Lie algebra $(\mathcal{A},[~,~],N)$. Define the bracket product on $\mathcal{A}\oplus V$ by Eq.(\mref{eq:m}) and a linear map $N+\alpha:\mathcal{A}\oplus V \longrightarrow \mathcal{A}\oplus V$ by
 \begin{eqnarray}\label{eq:d}
 (N+\alpha)(x+u):=N(x)+\alpha(u).
 \end{eqnarray}
 Then $(\mathcal{A}\oplus V, [~,~]_\star, N+\alpha)$ is a Nijenhuis mock-Lie algebra denoted by $(\mathcal{A}{\ltimes_\rho} V,N+\alpha)$.
 \end{pro}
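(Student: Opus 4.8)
The plan is to verify the two defining identities of a Nijenhuis mock-Lie algebra for the pair $(\mathcal{A}\oplus V, [~,~]_\star, N+\alpha)$: first that $(\mathcal{A}\oplus V, [~,~]_\star)$ is a mock-Lie algebra, and second that $N+\alpha$ satisfies the Nijenhuis condition \eqref{eq:bja} on it. The first point is exactly Remark \ref{rmk:gho}, since $(V,\rho)$ is in particular a representation of the underlying mock-Lie algebra $(\mathcal{A},[~,~])$; so $[~,~]_\star$ defined by \eqref{eq:m} already gives the semidirect product mock-Lie algebra $\mathcal{A}\ltimes_\rho V$, and nothing new is needed there.

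The substance is the second point. First I would take arbitrary elements $x+u, y+v \in \mathcal{A}\oplus V$ and expand both sides of
\[
[(N+\alpha)(x+u), (N+\alpha)(y+v)]_\star + (N+\alpha)^2([x+u,y+v]_\star) = (N+\alpha)([(N+\alpha)(x+u), y+v]_\star) + (N+\alpha)([x+u, (N+\alpha)(y+v)]_\star)
\]
using \eqref{eq:d} and \eqref{eq:m}. Each side splits into an $\mathcal{A}$-component and a $V$-component. The $\mathcal{A}$-component of the identity reduces precisely to the Nijenhuis equation \eqref{eq:bja} for $N$ on $\mathcal{A}$, which holds by hypothesis. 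The $V$-component, after collecting the terms involving $\rho$ and $\alpha$, reduces to
\[
\rho(N(x))\alpha(v) + \alpha^2(\rho(x)v) = \alpha(\rho(N(x))v) + \alpha(\rho(x)\alpha(v))
\]
together with the same identity with the roles of the $\mathcal{A}$- and $V$-arguments interchanged (i.e. the terms coming from $\rho(\cdot)u$); both of these are exactly equation \eqref{eq:1} in Definition \ref{de:a}, the compatibility condition defining a representation of a Nijenhuis mock-Lie algebra. So the whole verification is a bookkeeping of which representation axiom each homogeneous component invokes.

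I expect the main obstacle to be purely organizational rather than conceptual: keeping track of the six-or-so terms on each side of \eqref{eq:bja} after the double expansion, and making sure the $\rho(N(x))\alpha(v)$-type terms and the $\rho(x)\alpha(v)$-type terms from the two summands of $[~,~]_\star$ are matched correctly against both instances of \eqref{eq:1}. A clean way to present it is to note that the map $N+\alpha$ is block-diagonal with respect to the decomposition $\mathcal{A}\oplus V$ and that $[~,~]_\star$ restricts to $[~,~]$ on $\mathcal{A}$ and to the $\rho$-action mixing $\mathcal{A}$ with $V$, so the Nijenhuis identity on $\mathcal{A}\oplus V$ decouples into: (i) the Nijenhuis identity for $N$ on $\mathcal{A}$, and (ii) for each ordered pair of arguments with at least one in $V$, the identity \eqref{eq:1}. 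Since all of these hold by assumption, the proof is complete, and one may legitimately write ``the verification is a direct computation using \eqref{eq:m}, \eqref{eq:d}, \eqref{eq:bja} and \eqref{eq:1}.''
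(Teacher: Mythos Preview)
Your proposal is correct and matches the paper's proof essentially line for line: the paper invokes Remark~\ref{rmk:gho} for the mock-Lie structure and then expands the Nijenhuis identity for $N+\alpha$ using \eqref{eq:m} and \eqref{eq:d}, reducing it to \eqref{eq:bja} on the $\mathcal{A}$-component and two instances of \eqref{eq:1} on the $V$-component. Your block-diagonal observation is a nice way to phrase why the identity decouples, but the underlying computation is identical.
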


 \begin{proof} By Remark \ref{rmk:gho}, $\mathcal{A}\oplus V$ is a mock-Lie algebra. For all $x, y\in \mathcal{A}$ and $u, v\in V$, we calculate
 \begin{eqnarray*}
 &&\hspace{-16mm}[(N+\alpha)(x+u),(N+\alpha)(y+v)]_\star+(N+\alpha)^2[x+u,y+v]_\star
 -(N+\alpha)[(N+\alpha)(x+u),y+v]_\star\\
 &&-(N+\alpha)[x+u,(N+\alpha)(y+v)]_\star\\
 \hspace{-3mm}&\stackrel{(\ref{eq:m})(\ref{eq:d})}{=}&\hspace{-3mm}[N(x),N(y)]+\rho(N(x))\alpha(v)+\rho(N(y))\alpha(u)+(N+\alpha)(N[x,y]+\alpha(\rho(x)v)+\alpha(\rho(y)u)\\
 \hspace{-3mm}&&\hspace{-5mm}-(N+\alpha)([N(x),y]+\rho(N(x))(v)+\rho(y)\alpha(u))-(N+\alpha)([x,N(y)]+\rho(x)\alpha(v)+(\rho(N(y)))u)\\
 \hspace{-3mm}&\stackrel{(\ref{eq:d})}{=}&\hspace{-3mm}[N(x),N(y)]+\rho(N(x))\alpha(v)+\rho(N(y))\alpha(u)+N^2([x,y])+\alpha^2(\rho(x)v)+\alpha^2(\rho(y)u)\\
 \hspace{-3mm}&&\hspace{-3mm}-N[N(x),y]-\alpha(\rho(N(x))v)-\alpha(\rho (y)\alpha(u))-N([x,N(y)])-\alpha(\rho(x)\alpha(v))-\alpha(\rho(N(y))u))\\
 \hspace{-3mm}&\stackrel {(\ref{eq:bja})(\ref{eq:1})}{=}&\hspace{-3mm}0,
 \end{eqnarray*}
 finishing the proof.
 \end{proof}

 Let $(\mathcal{A},[~,~])$ be a mock-Lie algebra and $(V,\rho)$ be a representation of $(\mathcal{A},[~,~])$. Then by \cite{BCHM}, $(V^*,\rho^*)$ is a representation of $(\mathcal{A},[~,~])$, where $\rho^*:\mathcal{A}\longrightarrow End(V^*)$ is given by
 \begin{eqnarray}\label{eq:sr}
 \langle{\rho^*}(x)u^*,v\rangle=\langle{\rho(x)}v,u^*\rangle,\quad x\in \mathcal{A}, u^*\in V^*, v\in V.
 \end{eqnarray}

 \begin{lem}\label{lem:HO} Let $(\mathcal{A},[~,~],N)$ be a Nijenhuis mock-Lie algebra, $(V,\rho)$ be a representation of $(\mathcal{A},[~,~])$ and $\beta: V\longrightarrow V$ be a linear map. Then $(V^*,\rho^*,\beta^*)$ is a representation of $(\mathcal{A},[~,~],N)$ if and only if, for all $x\in \mathcal{A}, v\in V$,
 \begin{eqnarray}\label{eq:fb}
 &\beta(\rho(N(x))v)+\rho(x)\beta^2(v)=\rho(N(x))\beta (v)+\beta(\rho(x) \beta(v)).&
 \end{eqnarray}
 In particular, for a linear map $S:\mathcal{A}\longrightarrow \mathcal{A}$, the triple $(\mathcal{A}^*, ad^*, S^*)$ is a representation of $(\mathcal{A},[~,~],N)$ if and only if, for all $x, y\in \mathcal{A}$,
 \begin{eqnarray}\label{eq:sfh}
 S([N(x),y])+[x,S^2(y)]=[N(x),S(y)]+S([x,S(y)]).
 \end{eqnarray}
 \end{lem}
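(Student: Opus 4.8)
The plan is to reduce the claim to the single compatibility axiom of Definition \ref{de:a} and then dualize it pointwise back to $V$.

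First, since $(V,\rho)$ is a representation of $(\mathcal{A},[~,~])$, we know from \cite{BCHM} that $(V^*,\rho^*)$ is again a representation of $(\mathcal{A},[~,~])$. Hence, by Definition \ref{de:a}, proving that $(V^*,\rho^*,\beta^*)$ is a representation of the Nijenhuis mock-Lie algebra $(\mathcal{A},[~,~],N)$ amounts exactly to verifying Equation \eqref{eq:1} for the data $(V^*,\rho^*,\beta^*)$, namely
\begin{equation*}
\rho^*(N(x))\beta^*(u^*) + (\beta^*)^2\big(\rho^*(x)u^*\big) = \beta^*\big(\rho^*(N(x))u^*\big) + \beta^*\big(\rho^*(x)\beta^*(u^*)\big)
\end{equation*}
for all $x\in\mathcal{A}$ and $u^*\in V^*$. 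So the only thing left is to show this identity is equivalent to \eqref{eq:fb}.

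Second, I would evaluate both sides of the displayed identity on an arbitrary $v\in V$, using repeatedly the adjunction \eqref{eq:sr} for $\rho^*$ together with the transpose relation $\langle\beta^*(u^*),w\rangle=\langle u^*,\beta(w)\rangle$ for $w\in V$. Each of the four terms is moved across the pairing: for instance $\langle\rho^*(N(x))\beta^*(u^*),v\rangle=\langle\beta^*(u^*),\rho(N(x))v\rangle=\langle u^*,\beta(\rho(N(x))v)\rangle$, and similarly for the others. Collecting terms, the left-hand side becomes $\langle u^*,\;\beta(\rho(N(x))v)+\rho(x)\beta^2(v)\rangle$ and the right-hand side becomes $\langle u^*,\;\rho(N(x))\beta(v)+\beta(\rho(x)\beta(v))\rangle$. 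Since this must hold for all $u^*\in V^*$ and all $v\in V$, it is equivalent to \eqref{eq:fb}, which settles the first claim.

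Finally, the ``in particular'' statement is the special case $V=\mathcal{A}$, $\rho=ad$, $\beta=S$: one first notes that $(\mathcal{A},ad)$ is a representation of $(\mathcal{A},[~,~])$ (a rewriting of the mock-Lie Jacobi identity \eqref{eq:i} using commutativity), so the first part applies and \eqref{eq:fb} specializes verbatim to \eqref{eq:sfh}. The computation is entirely mechanical, and I do not expect a genuine obstacle; the one point requiring care is bookkeeping with the pairing $\langle-,-\rangle$, which the paper writes in both orders, so I would fix the convention $\langle u^*,v\rangle=u^*(v)=\langle v,u^*\rangle$ at the outset in order to apply the definitions of $\rho^*$ and $\beta^*$ consistently, noting that no signs intervene because mock-Lie representations are symmetric in the module variables.
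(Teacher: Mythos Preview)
Your proposal is correct and follows essentially the same approach as the paper's proof: reduce to the single compatibility condition~\eqref{eq:1} for $(V^*,\rho^*,\beta^*)$, then dualize termwise via the pairing to obtain~\eqref{eq:fb}, with the ``in particular'' statement following by specialization to $(\mathcal{A},ad,S)$. The paper is in fact more terse, writing out only the one-line pairing identity and declaring the rest obvious.
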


 \begin{proof} We note that $(V^*,\rho^*)$ is a representation of $(\mathcal{A},[~,~])$. For all $x\in \mathcal{A}, u^*\in V^*, v\in V$, we check Eq.$(\ref{eq:1})$ for $(V^*,\rho^*,\beta^*)$ as follows.
 \begin{eqnarray*}
 &&\hspace{-16mm}\langle\rho^* (N(x))\beta^*(u^*)+{\beta^*}^2(\rho^*(x)u^*)-\beta^*(\rho^*(N(x)u^*)-\beta^*(\rho^*(x) \beta^*(u^*)), v\rangle\\
 &\stackrel {(\ref{eq:sr})}{=}&\langle u^*,\beta(\rho(N(x))v)+\rho(x)(\beta^2(v))-\rho(N(x))\beta(v)-\beta(\rho(x)\beta(v))\rangle.
 \end{eqnarray*}
 The rest are obvious.
 \end{proof}


 \begin{defi}\label{de:mrt} With notations in Lemma \ref{lem:HO}. If Eq.(\ref{eq:fb}) holds, then we say that $\beta$ is {\bf admissible} to $(\mathcal{A},[~,~],N)$ on $(V, \rho)$. If Eq.(\ref{eq:sfh}) holds, we say that $S$ is {\bf adjoint-admissible} to $(\mathcal{A},[~,~],N)$ or $(\mathcal{A},[~,~],N)$ is {\bf $S$-adjoint-admissible}.
 \end{defi}

\subsection{Mock-Lie bialgebras revisited}\mlabel{se:tri}

 Dual to the notion of mock-Lie algebras, we have

 \begin{defi}\mlabel{de:cl} A {\bf mock-Lie coalgebra} is a pair $(\mathcal{A}, \D)$, where $\mathcal{A}$ is a vector space and $\D: \mathcal{A}\lr \mathcal{A}\o \mathcal{A}$ (Sweedler notation \cite{Sw}: $\D(x)=x_{(1)}\o x_{(2)}$ ) is a linear map such that for all $x\in\mathcal{A}$,
 \begin{eqnarray}
 &x_{(1)}\o x_{(2)}=x_{(2)}\o x_{(1)},&\mlabel{eq:cml-1}\\
 &x_{(1)}\o x_{(2)(1)}\o x_{(2)(2)}+x_{(1)(1)}\o x_{(1)(2)}\o x_{(2)}+x_{(2)(1)}\o x_{(1)}\o x_{(2)(2)}=0.&\mlabel{eq:cml}
 \end{eqnarray}
 \end{defi}

 \begin{pro}\mlabel{pro:de:cl} \begin{enumerate}[(1)]
   \item \mlabel{it:de:cl1} If $(\mathcal{A}, \D)$ is a mock-Lie coalgebra, then $(\mathcal{A}^*, \D^*)$ is a mock-Lie algebra, where $\D^*:\mathcal{A}^*\o \mathcal{A}^*\lr \mathcal{A}^*$ is defined by
   $$
   \langle\D^*(\xi\o \eta), x\rangle=\langle \xi, x_{(1)}\rangle \langle \eta, x_{(2)}\rangle, \quad \forall~~\xi, \eta\in \mathcal{A}^*, x\in \mathcal{A}.
   $$
   \item \mlabel{it:de:cl2} If $(\mathcal{A}, [~,~])$ is a finite-dimensional mock-Lie algebra, then $(\mathcal{A}^*, [~,~]^*)$ is a mock-Lie coalgebra, where $[~,~]^*:\mathcal{A}^*\lr \mathcal{A}^*\o \mathcal{A}^*$ is defined by
   $$
   \langle[~,~]^*(\xi), x\o y\rangle=\langle \xi, [x, y]\rangle, \quad \forall~~\xi\in \mathcal{A}^*, x, y\in \mathcal{A}.
   $$
 \end{enumerate}
 \end{pro}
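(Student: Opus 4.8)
The plan is to prove both parts by direct dualization, exploiting the symmetry between the defining identities of a mock-Lie algebra (\ref{eq:1-1})--(\ref{eq:i}) and those of a mock-Lie coalgebra (\ref{eq:cml-1})--(\ref{eq:cml}). For part (\ref{it:de:cl1}), I would first verify commutativity of $\D^*$: for $\xi,\eta\in\mathcal{A}^*$ and $x\in\mathcal{A}$, compute $\langle\D^*(\xi\o\eta),x\rangle=\langle\xi,x_{(1)}\rangle\langle\eta,x_{(2)}\rangle$ and use (\ref{eq:cml-1}) to swap $x_{(1)}\o x_{(2)}=x_{(2)}\o x_{(1)}$, obtaining $\langle\eta,x_{(1)}\rangle\langle\xi,x_{(2)}\rangle=\langle\D^*(\eta\o\xi),x\rangle$. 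Then for the mock-Lie identity (\ref{eq:i}), I would evaluate $\langle\D^*(\xi\o\D^*(\eta\o\zeta))+\text{(cyclic permutations)},x\rangle$ against an arbitrary $x$; unwinding the definition of $\D^*$ twice (noting that $\D^*(\eta\o\zeta)$ applied in the second slot forces a comultiplication $\D$ on that tensor factor of $x$), this becomes the pairing of $\xi\o\eta\o\zeta$ with the left-hand side of (\ref{eq:cml}), which vanishes. Hence $(\mathcal{A}^*,\D^*)$ is a mock-Lie algebra.

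For part (\ref{it:de:cl2}), the argument runs in the reverse direction and uses finite-dimensionality to identify $\mathcal{A}^{**}\cong\mathcal{A}$ and $(\mathcal{A}\o\mathcal{A})^*\cong\mathcal{A}^*\o\mathcal{A}^*$. I would first check cocommutativity (\ref{eq:cml-1}) for $[~,~]^*$: pairing $[~,~]^*(\xi)$ with $x\o y$ gives $\langle\xi,[x,y]\rangle$, and since $[x,y]=[y,x]$ by (\ref{eq:1-1}), this equals the pairing with $y\o x$, which says exactly that $[~,~]^*(\xi)$ is symmetric in $\mathcal{A}^*\o\mathcal{A}^*$. For the co-Jacobi identity (\ref{eq:cml}), I would pair the left-hand side (an element of $\mathcal{A}^*\o\mathcal{A}^*\o\mathcal{A}^*$) against an arbitrary $x\o y\o z\in\mathcal{A}\o\mathcal{A}\o\mathcal{A}$; each of the three coassociator-type terms becomes, after applying the definition of $[~,~]^*$ twice, a term of the form $\langle\xi,[x,[y,z]]\rangle$ (with the appropriate placement of variables dictated by which tensor slot the double-comultiplication lands in), so the total equals $\langle\xi\o\cdots, [x,[y,z]]+[y,[z,x]]+[z,[x,y]]\rangle=0$ by (\ref{eq:i}). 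Non-degeneracy of the pairing then forces (\ref{eq:cml}).

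The only genuinely delicate point is bookkeeping: making sure that when one applies $\D^*$ (resp.\ $[~,~]^*$) in a nested fashion, the Sweedler indices line up with the correct tensor slots so that the three cyclic terms in (\ref{eq:i}) match the three terms in (\ref{eq:cml}) in the right order. This is a standard but slightly fiddly transpose-of-a-transpose computation; there is no conceptual obstacle, and for a paper at this level it is entirely reasonable to record it as "straightforward" or to spell out just the co-Jacobi computation of (\ref{it:de:cl2}) in a couple of display lines, since (\ref{it:de:cl1}) is literally the $\mathcal{A}\rightsquigarrow\mathcal{A}^*$ mirror image.
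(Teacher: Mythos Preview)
Your proposal is correct and is exactly the standard dualization argument one expects here. The paper itself does not spell out any details: its entire proof reads ``Similar to the case of Hopf algebra, see \cite{Sw}.'' So your write-up is in fact more detailed than the paper's, but the underlying approach (transpose the coalgebra axioms through the evaluation pairing, use finite-dimensionality for the reverse direction) is the same one being invoked.
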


 \begin{proof} Similar to the case of Hopf algebra, see \cite{Sw}. \end{proof}

 \begin{defi}\mlabel{de:lb} A {\bf mock-Lie bialgebra} is a triple $(\mathcal{A}, [~,~], \D)$, where $(\mathcal{A}, [~,~])$ is a mock-Lie algebra and $(\mathcal{A}, \D)$ is a mock-Lie coalgebra such that, for all $x, y\in \mathcal{A}$,
 \begin{eqnarray}
 &\D([x, y])=-[x, y_{(1)}]\o y_{(2)}-y_{(1)}\o [x, y_{(2)}]-[y, x_{(1)}]\o x_{(2)}-x_{(1)}\o [y, x_{(2)}].& \mlabel{eq:b2}
 \end{eqnarray}
 \end{defi}

 \begin{rmk} \mlabel{rmk:de:lb} In Definition \mref{de:lb}, the condition that $(\mathcal{A}^*, [~,~]_{\mathcal{A}^*})$ is a mock-Lie algebra in \cite[Definition 3.5]{BCHM} is replaced by the condition that $(\mathcal{A}, \D)$ is a mock-Lie coalgebra. There is no dimensional limit, and Definition \mref{de:lb} is more common than \cite[Definition 3.5]{BCHM}.
 \end{rmk}

 Let $(\mathcal{A}, [~,~])$ be a mock-Lie algebra. For $r\in \mathcal{A}\o \mathcal{A}$, define $\D_r$ by
 \begin{equation}\mlabel{eq:cop}
 \D(x):=\D_r(x):=[x, r^{1}]\o r^{2}-r^{1}\o [x, r^{2}], \quad \forall x\in \mathcal{A}.
 \end{equation}

 \begin{pro} \label{de:qt} Let $(\mathcal{A}, [~,~])$ be a mock-Lie algebra. If $r\in \mathcal{A}\o \mathcal{A}$ is an antisymmetric solution of the following {\bf classical mock-Lie Yang-Baxter equation} (cmLYBe for short) in $(\mathcal{A}, [~,~])$:
 \begin{eqnarray}
 &[r^1, \br^1]\o r^2\o \br^2+r^1\o \br^1\o [r^2,\br^2]-r^1\o [\br^1, r^2]\o \br^2=0,&\mlabel{eq:cybe}
 \end{eqnarray}
 then $(\mathcal{A}, [~,~], \D_r)$ is a mock-Lie bialgebra, where $\D_r$ is defined by Eq.(\mref{eq:cop}). In this case, we call this mock-Lie bialgebra {\bf quasitriangular} and it is denoted by $(\mathcal{A}, [~,~], r, \D_r)$.
 \end{pro}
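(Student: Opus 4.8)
The plan is to verify directly that $(\mathcal{A},[~,~],\D_r)$ satisfies the two axioms of a mock-Lie coalgebra (Definition \mref{de:cl}) together with the compatibility condition \meqref{eq:b2}. Since $r$ is antisymmetric, the cocommutativity \meqref{eq:cml-1} is immediate: writing $r = r^1\o r^2 = -r^2\o r^1$, the definition \meqref{eq:cop} gives $\D_r(x) = [x,r^1]\o r^2 - r^1\o[x,r^2]$, and swapping the two tensor legs while using antisymmetry of $r$ returns the same expression. So the first axiom costs nothing.

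The substantive part is the co-Jacobi identity \meqref{eq:cml}. First I would expand $(\D_r\o\id)\D_r(x)$, $(\id\o\D_r)\D_r(x)$ and the cyclic term in terms of a single copy $r = r^1\o r^2$ and a second copy $\bar r = \br^1\o\br^2$. Applying $\D_r$ again produces iterated brackets such as $[[x,r^1],\br^1]\o\br^2\o r^2$; I would group the resulting nine (or so) terms in the threefold tensor product into two packages: those in which the two applications of $\D_r$ both "act on $x$" (governed by the mock-Lie Jacobi identity \meqref{eq:i} for $(\mathcal{A},[~,~])$, which forces the sum of such cyclic terms to vanish), and those in which the second $\D_r$ acts on a leg of $r$ (governed precisely by the cmLYBe \meqref{eq:cybe}). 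Concretely, after reindexing one expects the "mixed" terms to assemble, by $\mathbb{Z}/3$-cyclic symmetry of \meqref{eq:cml}, into three copies of the left-hand side of \meqref{eq:cybe} applied in the appropriate slot, each of which is zero by hypothesis; the remaining "pure $x$" terms cancel by \meqref{eq:i}. This bookkeeping is the main obstacle — keeping track of which tensor factor each bracket lands in and matching signs against the specific placement of the minus sign in \meqref{eq:cop} and in \meqref{eq:cybe}.

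Next I would check the bialgebra compatibility \meqref{eq:b2}. Here one computes $\D_r([x,y])$ from the definition, $\D_r([x,y]) = [[x,y],r^1]\o r^2 - r^1\o[[x,y],r^2]$, and on each bracket of the form $[[x,y],r^i]$ one applies the mock-Lie Jacobi identity \meqref{eq:i} to rewrite it via $[x,[y,r^i]]$ and $[y,[x,r^i]]$ (up to sign). Substituting $\D_r(y) = [y,r^1]\o r^2 - r^1\o[y,r^2]$ and the analogous formula for $\D_r(x)$ into the right-hand side of \meqref{eq:b2} and expanding, one should find exactly the same collection of terms. Since $r$ is antisymmetric, the symmetric appearance of $x$ and $y$ on the right-hand side of \meqref{eq:b2} is consistent with what the Jacobi rewriting produces; indeed this computation does not even need the cmLYBe, only \meqref{eq:i} and antisymmetry of $r$ — a useful sanity check on the placement of signs.

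Finally, having verified \meqref{eq:cml-1}, \meqref{eq:cml} and \meqref{eq:b2}, the triple $(\mathcal{A},[~,~],\D_r)$ is a mock-Lie bialgebra by Definition \mref{de:lb}, and we name it quasitriangular as in the statement. I would remark that the whole argument is the mock-Lie analogue of the classical Drinfeld computation showing that a solution of the classical Yang-Baxter equation yields a Lie bialgebra, with the Jacobi identity \meqref{eq:i} replacing the Lie-algebra Jacobi identity and the cmLYBe \meqref{eq:cybe} replacing the CYBE; the only genuinely new feature to watch is the all-plus-one-minus sign pattern peculiar to the mock-Lie setting, which is why I would carry the signs explicitly rather than invoke the Lie case verbatim.
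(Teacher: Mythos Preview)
Your plan is correct and complete in outline: cocommutativity from antisymmetry of $r$, co-Jacobi from the cmLYBe together with \meqref{eq:i}, and compatibility \meqref{eq:b2} from \meqref{eq:i} alone. The paper itself does not carry out any of this; its proof is the single line ``It is direct by \cite[Theorem 4.4]{BCHM}'', deferring entirely to the reference where the general coboundary case (Remark \mref{rmk:fm}) is worked out and then specialised. So your approach is not different in spirit---it is simply what the cited theorem does---but it is more self-contained than what the present paper records.

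One small correction: in your discussion of \meqref{eq:b2} you say the computation needs antisymmetry of $r$. In fact it does not. Writing out both sides using \meqref{eq:cop} and the mock-Lie Jacobi identity \meqref{eq:i}, the ``cross terms'' $[x,r^1]\o[y,r^2]$, $[y,r^1]\o[x,r^2]$ and their companions cancel in pairs purely by the symmetry of the right-hand side of \meqref{eq:b2} in $x$ and $y$, with no hypothesis on $r$. This is exactly why the construction is called coboundary: $\D_r$ is a $1$-coboundary for the adjoint action, hence automatically a $1$-cocycle, which is precisely \meqref{eq:b2}. Antisymmetry of $r$ is used only for \meqref{eq:cml-1}, and the cmLYBe only for \meqref{eq:cml}.
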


 \begin{proof} It is direct by \cite[Theorem 4.4]{BCHM}. \end{proof}

 \begin{pro}\mlabel{pro:qt} Let $(\mathcal{A}, [~,~])$ be a mock-Lie algebra and $r\in \mathcal{A}\o \mathcal{A}$ be an antisymmetric element. Then the quadruple $(\mathcal{A}, [~,~], r, \D_r)$ is a quasitriangular mock-Lie bialgebra, where $\D_r$ is defined by Eq.(\mref{eq:cop}), if and only if
 \begin{eqnarray}
 &(\D_r\o \id)(r)=-r^1\o \br^1\o [r^2, \br^2]&\mlabel{eq:qt1}
 \end{eqnarray}
 or
 \begin{eqnarray}
 &(\id\o \D_r)(r)=[r^1, \br^1]\o r^2\o \br^2&\mlabel{eq:qt2}
 \end{eqnarray}
 holds.
 \end{pro}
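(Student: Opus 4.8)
The plan is to show that each of the two displayed conditions is equivalent to the cmLYBe \meqref{eq:cybe}, so that the statement follows immediately from Proposition \mref{de:qt}. The natural approach is to compute $(\D_r\o\id)(r)$ directly from the definition \meqref{eq:cop} of $\D_r$, using the antisymmetry of $r$ throughout. Writing $r=r^1\o r^2=\br^1\o\br^2$ for two copies, we have
\begin{eqnarray*}
(\D_r\o\id)(r)=\D_r(r^1)\o r^2=[r^1,\br^1]\o\br^2\o r^2-\br^1\o[r^1,\br^2]\o r^2,
\end{eqnarray*}
where $\D_r$ is applied in the first tensor leg and the $\br$-copy is the one carrying the $\D_r$. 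Relabelling the dummy indices and using $r=-\tau(r)$ (i.e. $r^1\o r^2=-r^2\o r^1$) in each term, I would rewrite this expression so that it becomes $-r^1\o\br^1\o[r^2,\br^2]$ plus precisely the left-hand side of \meqref{eq:cybe} (up to the relabelling). Thus \meqref{eq:qt1} holds if and only if the cmLYBe holds.

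The second equivalence is handled the same way: compute $(\id\o\D_r)(r)=r^1\o\D_r(r^2)=r^1\o[r^2,\br^1]\o\br^2-r^1\o\br^1\o[r^2,\br^2]$, again relabel and apply antisymmetry, and check that vanishing of the difference between this and $[r^1,\br^1]\o r^2\o\br^2$ is again exactly \meqref{eq:cybe}. Alternatively, one can observe that \meqref{eq:qt1} and \meqref{eq:qt2} are carried to each other by a cyclic permutation of the three tensor factors combined with antisymmetry of $r$, and that the cmLYBe \meqref{eq:cybe} is visibly invariant (up to sign) under that same cyclic rotation; this gives the equivalence of \meqref{eq:qt1} and \meqref{eq:qt2} without redoing the computation, and then only one of the two needs to be matched against \meqref{eq:cybe}.

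Concretely, the steps in order: (i) expand $(\D_r\o\id)(r)$ via \meqref{eq:cop}; (ii) use antisymmetry of $r$ to flip tensor factors in each summand so that all three terms are written with the bracket in a consistent position; (iii) collect terms and identify the resulting three-tensor expression with $-r^1\o\br^1\o[r^2,\br^2]+(\text{LHS of }\meqref{eq:cybe})$, proving \meqref{eq:qt1} $\Leftrightarrow$ cmLYBe; (iv) repeat (i)--(iii) for $(\id\o\D_r)(r)$, or deduce \meqref{eq:qt2} from \meqref{eq:qt1} by the cyclic-symmetry remark; (v) invoke Proposition \mref{de:qt} to conclude that either condition is equivalent to $(\mathcal{A},[~,~],r,\D_r)$ being quasitriangular.

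The main obstacle is purely bookkeeping: keeping the Sweedler-style dummy indices $r^1,r^2,\br^1,\br^2$ straight through several applications of antisymmetry, and making sure the sign conventions in \meqref{eq:cop} (note the minus sign in $\D_r$) propagate correctly so that the three terms assemble into \meqref{eq:cybe} with the right signs rather than into some other cyclic combination. There is no conceptual difficulty beyond this; once the indices are tracked carefully the identification is forced. One mild subtlety worth flagging explicitly in the write-up is that $\D_r(x)$ as defined in \meqref{eq:cop} is automatically antisymmetric (by antisymmetry of $r$ and commutativity of $[~,~]$), so that it lands in the symmetric part $S^2\mathcal{A}$ as required for a mock-Lie coalgebra structure — this is what makes the cyclic-rotation argument in step (iv) legitimate.
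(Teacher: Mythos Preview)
Your approach is essentially the same as the paper's: expand $(\D_r\o\id)(r)$ from \meqref{eq:cop}, relabel dummy indices (the paper also uses commutativity \meqref{eq:1-1} here), and identify the result with the cmLYBe \meqref{eq:cybe}; the paper then handles \meqref{eq:qt2} by a one-line appeal to the antisymmetry of $r$, which is a compressed version of your cyclic-symmetry alternative in step (iv). One small slip to fix: in your final paragraph you say $\D_r(x)$ is ``automatically antisymmetric'' but then correctly say it lands in $S^2\mathcal{A}$ --- you mean \emph{symmetric} (cocommutative), as required by \meqref{eq:cml-1}.
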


 \begin{proof} By Eq.(\mref{eq:cop}), we have
 \begin{eqnarray*}\mlabel{eq:qt1-1}
 (\D_r\o \id)(r)
 &\stackrel{ }=&[r^1, \br^1]\o \br^2\o r^2-r^1\o [\br^2, r^2]\o \br^2\\
 &\stackrel{(\ref{eq:1-1})}=&[r^1, \br^1]\o r^2\o \br^2-r^1\o [\br^2, r^2]\o \br^2.
 \end{eqnarray*}
 Then Eq.(\ref{eq:cybe}) $\Leftrightarrow$ Eq.(\ref{eq:qt1}).
 By the skew-symmetry of $r$, we obtain that Eq.(\mref{eq:qt2}) $\Leftrightarrow$  Eq.(\mref{eq:cybe}).
 \end{proof}

 \begin{defi}\mlabel{de:ccybe} Let $(\mathcal{A}, \D)$ be a mock-Lie coalgebra and $\om\in (\mathcal{A}\o \mathcal{A})^*$ be a bilinear form. The equation
 \begin{eqnarray}\mlabel{eq:ccybe}
 \om(x_{(1)}, y)\om(x_{(2)}, z)+\om(x, z_{(1)})\om(y, z_{(2)})-\om(x, y_{(1)})\om(y_{(2)}, z)=0,\quad x, y, z\in \mathcal{A},
 \end{eqnarray}
 is called a {\bf co-classical mock-Lie Yang-Baxter equation (ccmLYBe for short) in $(\mathcal{A}, \D)$}.
 \end{defi}

 \begin{thm}\mlabel{de:cqt} Let $(\mathcal{A}, \D)$ be a mock-Lie coalgebra and $\om\in (\mathcal{A}\o \mathcal{A})^*$ be a skew-symmetric (in the sense of $\om(x, y)=-\om(y, x)$) solution of the ccmLYBe in $(\mathcal{A}, \D)$.
 Then $(\mathcal{A}, [~,~]_{\om}, \D)$ is a mock-Lie bialgebra, where the multiplication on $\mathcal{A}$ is defined by
 \begin{eqnarray}\mlabel{eq:p}
 [x, y]_{\om}=x_{(1)}\om(x_{(2)}, y)-y_{(1)}\om(x, y_{(2)}),\quad \forall~x, y\in \mathcal{A}.
 \end{eqnarray}
 This bialgebra is called a {\bf dual quasitriangular mock-Lie bialgebra} denoted by $(\mathcal{A}, \D, \om, [~,~]_{\om})$.
 \end{thm}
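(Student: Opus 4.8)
The plan is to check the three defining conditions of a mock-Lie bialgebra (Definition \ref{de:lb}) for $(\mathcal{A}, [~,~]_\om, \D)$: that $(\mathcal{A}, [~,~]_\om)$ is a mock-Lie algebra, that $(\mathcal{A}, \D)$ is a mock-Lie coalgebra, and that the compatibility \eqref{eq:b2} holds. The middle condition is part of the hypothesis, so only the first and the third require argument. Conceptually the statement is dual to Proposition \ref{de:qt}: when $\dim\mathcal{A}<\infty$ one identifies $\om\in(\mathcal{A}\o\mathcal{A})^*$ with an antisymmetric $r_\om\in\mathcal{A}^*\o\mathcal{A}^*$ and $(\mathcal{A},\D)$ with the mock-Lie algebra $(\mathcal{A}^*,\D^*)$ of Proposition \ref{pro:de:cl}(1); a short pairing computation shows that the ccmLYBe \eqref{eq:ccybe} for $\om$ is exactly the cmLYBe \eqref{eq:cybe} for $r_\om$ in $(\mathcal{A}^*,\D^*)$ --- the third term of \eqref{eq:cybe} being brought to the form appearing in \eqref{eq:ccybe} by cocommutativity \eqref{eq:cml-1} --- and that the transpose (dual map) of $\D_{r_\om}$ from \eqref{eq:cop} is precisely the multiplication $[~,~]_\om$ of \eqref{eq:p}. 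Proposition \ref{de:qt} then yields a mock-Lie bialgebra on $\mathcal{A}^*$ whose dual is $(\mathcal{A},[~,~]_\om,\D)$. For the general (possibly infinite-dimensional) statement one runs the corresponding computation directly on $\mathcal{A}$, which I now outline.

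Commutativity \eqref{eq:1-1} of $[~,~]_\om$ is immediate: expanding $[y,x]_\om=y_{(1)}\om(y_{(2)},x)-x_{(1)}\om(y,x_{(2)})$ and applying $\om(a,b)=-\om(b,a)$ to each term gives $x_{(1)}\om(x_{(2)},y)-y_{(1)}\om(x,y_{(2)})=[x,y]_\om$; only skew-symmetry of $\om$ is used. For the mock-Lie Jacobi identity \eqref{eq:i}, I would substitute \eqref{eq:p} twice into $[x,[y,z]_\om]_\om+[y,[z,x]_\om]_\om+[z,[x,y]_\om]_\om$, using linearity of $\D$ to pass it through the outer bracket (so e.g. $\D([y,z]_\om)=y_{(1)(1)}\o y_{(1)(2)}\,\om(y_{(2)},z)-z_{(1)(1)}\o z_{(1)(2)}\,\om(y,z_{(2)})$). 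This writes the cyclic sum as a linear combination of vectors --- first- and second-order coproduct components of $x,y,z$ --- weighted by products of two values of $\om$. The co-mock-Jacobi axiom \eqref{eq:cml} is then used to rewrite the second-order coproduct components into a uniform shape, and cocommutativity \eqref{eq:cml-1} together with skew-symmetry of $\om$ to align the arguments of the $\om$-factors; after this normalization the surviving combination is a sum of copies of the left-hand side of the ccmLYBe \eqref{eq:ccybe} (evaluated on $x,y,z$ and their first-order components), hence it is zero.

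For the compatibility \eqref{eq:b2}, I would compute $\D([x,y]_\om)$ by applying $\D$ to \eqref{eq:p} and using linearity, obtaining $x_{(1)(1)}\o x_{(1)(2)}\,\om(x_{(2)},y)-y_{(1)(1)}\o y_{(1)(2)}\,\om(x,y_{(2)})$, and separately expand each of the four terms on the right-hand side of \eqref{eq:b2} by rewriting $[a,b_{(i)}]_\om$ via \eqref{eq:p}. Both sides then become linear combinations in $\mathcal{A}\o\mathcal{A}$ of tensors built from coproduct components of $x$ and $y$ with a single $\om$-coefficient, and the two sides are matched using only that $(\mathcal{A},\D)$ is a mock-Lie coalgebra --- that is, \eqref{eq:cml} and \eqref{eq:cml-1} --- together with skew-symmetry of $\om$; the ccmLYBe is \emph{not} needed here. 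This is the dual of the fact, already built into Proposition \ref{de:qt}, that \eqref{eq:b2} holds for $\D_r$ for every antisymmetric $r$.

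The principal obstacle is the Jacobi verification in the second paragraph: after expanding \eqref{eq:p} twice one has a dozen terms with tangled Sweedler indices, and the delicate point is to regroup them --- guided by the coassociativity-type relation \eqref{eq:cml} --- so that each surviving block is visibly an instance of \eqref{eq:ccybe} with no leftover remainder. When $\dim\mathcal{A}<\infty$ this difficulty disappears, since the duality argument of the first paragraph reduces everything to Proposition \ref{de:qt}.
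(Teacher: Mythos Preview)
Your proposal is correct and follows essentially the paper's own route: the paper also expands the cyclic Jacobi sum via \eqref{eq:p} and reduces it to zero, declaring the remaining bialgebra conditions ``straightforward''. Your extra observations---the finite-dimensional duality with Proposition~\ref{de:qt}, that the compatibility \eqref{eq:b2} needs only the coalgebra axioms and skew-symmetry of $\om$ rather than the ccmLYBe, and that co-Jacobi \eqref{eq:cml} enters the final cancellation (the paper's last step cites only \eqref{eq:cml-1} and \eqref{eq:ccybe}, but grouping the nine surviving second-order terms by which variable carries the iterated coproduct and cancelling each triple does use \eqref{eq:cml})---are all accurate.
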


 \begin{proof} We check that Jacobi identity holds for $[~,~]_\om$ as follows, others conditions are straightforward. For all $x, y, z\in \mathcal{A}$, we have
 \begin{eqnarray*}
 &&\hspace{-10mm}[x, [y, z]_{\om}]_{\om}+[y, [z, x]_{\om}]_{\om}+[z, [x, y]_{\om}]_{\om}\\
 &&\stackrel{(\mref{eq:p})}{=}x_{(1)}\om(x_{(2)}, y_{(1)})\om(y_{(2)}, z)-y_{(1)(1)}\om(x, y_{(1)(2)})\om(y_{(2)}, z)-x_{(1)}\om(x_{(2)}, z_{(1)})\om(y, z_{(2)})\\
 &&\hspace{6mm}+z_{(1)(1)}\om(x, z_{(1)(2)})\om(y, z_{(2)})+y_{(1)}\om(y_{(2)}, z_{(1)})\om(z_{(2)}, x)-z_{(1)(1)}\om(y, z_{(1)(2)})\om(z_{(2)}, x)\\
 &&\hspace{6mm}-y_{(1)}\om(y_{(2)}, x_{(1)})\om(z, x_{(2)})+x_{(1)(1)}\om(y, x_{(1)(2)})\om(z, x_{(2)})+z_{(1)}\om(z_{(2)}, x_{(1)})\om(x_{(2)}, y)\\
 &&\hspace{6mm}-x_{(1)(1)}\om(z, x_{(1)(2)})\om(x_{(2)}, y)-z_{(1)}\om(z_{(2)}, y_{(1)})\om(x, y_{(2)})+y_{(1)(1)}\om(z, y_{(1)(2)})\om(x, y_{(2)})\\
 &&\stackrel{(\mref{eq:ccybe})}{=}x_{(1)}\om(x_{(2)(1)}, y)\om(x_{(2)(2)}, z)-y_{(1)(1)}\om(x, y_{(1)(2)})\om(y_{(2)}, z)+z_{(1)(1)}\om(x, z_{(1)(2)})\om(y, z_{(2)})\\
 &&\hspace{6mm}+y_{(1)}\om(y_{(2)(1)}, z)\om(y_{(2)(2)}, x)-z_{(1)(1)}\om(y, z_{(1)(2)})\om(z_{(2)}, x)+x_{(1)(1)}\om(y, x_{(1)(2)})\om(z, x_{(2)})\\
 &&\hspace{6mm}+z_{(1)}\om(z_{(2)(1)}, x)\om(z_{(2)(2)}, y)-x_{(1)(1)}\om(z, x_{(1)(2)})\om(x_{(2)}, y)+y_{(1)(1)}\om(z, y_{(1)(2)})\om(x, y_{(2)})\\
 &&\stackrel{(\mref{eq:cml-1})(\mref{eq:ccybe})}{=}0,
 \end{eqnarray*}
 finishing the proof.
 \end{proof}

 \begin{pro}\mlabel{pro:cqt} Let $(\mathcal{A}, \D)$ be a mock-Lie coalgebra and $\om\in (\mathcal{A}\o \mathcal{A})^*$ be a skew-symmetric 2-form. Then the quadruple $(\mathcal{A}, \D, \om, [~,~]_{\om})$ is a dual quasitriangular mock-Lie bialgebra, where $[~,~]_{\om}$ is defined by Eq.(\mref{eq:p}), if and only if
 \begin{eqnarray}\mlabel{eq:cqt1}
 \om([x, y]_{\om}, z)=-\om(x, z_{(1)})\om(y, z_{(2)})
 \end{eqnarray}
 or
 \begin{eqnarray}\mlabel{eq:cqt2}
 \om(x, [y, z]_{\om})=\om(x_{(1)}, y)\om(x_{(2)}, z)
 \end{eqnarray}
 holds, where $x, y, z\in \mathcal{A}$.
 \end{pro}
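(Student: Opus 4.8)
The plan is to imitate the proof of Proposition \ref{pro:qt}, using that by Theorem \ref{de:cqt} and the terminology introduced there, for a skew-symmetric $2$-form $\om$ the quadruple $(\mathcal{A}, \D, \om, [~,~]_{\om})$ being a dual quasitriangular mock-Lie bialgebra means exactly that $\om$ solves the ccmLYBe \eqref{eq:ccybe}. So it suffices to establish the equivalences $\eqref{eq:ccybe} \Leftrightarrow \eqref{eq:cqt1} \Leftrightarrow \eqref{eq:cqt2}$, which are purely a matter of unwinding Eq.~\eqref{eq:p}.

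First I would expand the left-hand side of \eqref{eq:cqt1}. By bilinearity of $\om$ together with the formula $[x, y]_{\om} = x_{(1)}\om(x_{(2)}, y) - y_{(1)}\om(x, y_{(2)})$,
\[
\om([x, y]_{\om}, z) = \om(x_{(1)}, z)\om(x_{(2)}, y) - \om(y_{(1)}, z)\om(x, y_{(2)}).
\]
Applying the cocommutativity \eqref{eq:cml-1} of $\D$ to each summand (swap the two tensor legs and relabel the Sweedler indices) rewrites the right-hand side as $\om(x_{(1)}, y)\om(x_{(2)}, z) - \om(x, y_{(1)})\om(y_{(2)}, z)$. Hence \eqref{eq:cqt1} holds for all $x,y,z\in\mathcal{A}$ if and only if
\[
\om(x_{(1)}, y)\om(x_{(2)}, z) + \om(x, z_{(1)})\om(y, z_{(2)}) - \om(x, y_{(1)})\om(y_{(2)}, z) = 0
\]
for all $x,y,z\in\mathcal{A}$, which is precisely \eqref{eq:ccybe}; this settles $\eqref{eq:ccybe} \Leftrightarrow \eqref{eq:cqt1}$.

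For the remaining equivalence I would invoke the skew-symmetry of $\om$: replacing $(x,y,z)$ by $(y,z,x)$ in \eqref{eq:cqt1} and applying $\om(a,b)=-\om(b,a)$ in each factor turns it into $\om(x, [y, z]_{\om}) = \om(x_{(1)}, y)\om(x_{(2)}, z)$, that is, \eqref{eq:cqt2}, and all steps are reversible. (Alternatively, one may expand the left-hand side of \eqref{eq:cqt2} directly via \eqref{eq:p} as $\om(x, y_{(1)})\om(y_{(2)}, z) - \om(x, z_{(1)})\om(y, z_{(2)})$ and note that \eqref{eq:cqt2} then reduces again to \eqref{eq:ccybe}.) The computation carries no genuine difficulty; the only point deserving attention is that the symmetrizations identifying the Sweedler-indexed products must come from the cocommutativity of $\D$ and not from the skew-symmetry of $\om$, since the latter would introduce sign changes --- exactly as commutativity of the bracket, rather than antisymmetry of $r$, was used at the analogous place in Proposition \ref{pro:qt}.
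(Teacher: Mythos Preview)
Your argument is correct and follows exactly the approach indicated by the paper, which merely states that the result is ``direct by Eqs.~\eqref{eq:p} and \eqref{eq:ccybe}.'' You have simply supplied the details of that direct computation, including the careful observation that cocommutativity of $\D$ (rather than skew-symmetry of $\om$) is what permits the reindexing of the Sweedler legs.
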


 \begin{proof} It is direct by Eqs. \eqref{eq:p} and \eqref{eq:ccybe}.
 \end{proof}

\subsection{Symplectic mock-Lie algebras from dual quasitriangular mock-Lie bialgebras}

 \begin{defi}(\cite[Definition 5.1]{BCHM})\mlabel{de:sym} Let $(\mathcal{A}, [~,~])$ be a mock-Lie algebra and $\om\in (\mathcal{A}\o \mathcal{A})^*$ be a skew-symmetric 2-form. If for all $x, y, z\in \mathcal{A}$, the equation below holds:
 \begin{eqnarray}\mlabel{eq:symp}
 \om([x, y], z)+\om([y, z], x)+\om([z, x], y)=0,
 \end{eqnarray}
 then $(\mathcal{A}, [~,~])$ is said a {\bf symplectic mock-Lie algebra} denoted by $(\mathcal{A}, [~,~], \om)$.
 \end{defi}

 \begin{pro}\mlabel{pro:de:sym} Let $(\mathcal{A}, \D)$ be a mock-Lie coalgebra and $\om\in (\mathcal{A}\o \mathcal{A})^*$ be a skew-symmetric 2-form. If $(\mathcal{A}, \D, \om, [~,~]_{\om})$ is a dual quasitriangular mock-Lie bialgebra, where $[~,~]_{\om}$ is defined by Eq.(\mref{eq:p}), then $(\mathcal{A}, [~,~]_\om, \om)$ is a symplectic mock-Lie algebra.
 \end{pro}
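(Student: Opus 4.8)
The goal is the cyclic identity $\om([x,y]_\om,z)+\om([y,z]_\om,x)+\om([z,x]_\om,y)=0$ from Definition \ref{de:sym}, and the plan is to deduce it from the ccmLYBe \eqref{eq:ccybe} by way of the characterization of dual quasitriangular mock-Lie bialgebras in Proposition \ref{pro:cqt}.

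First I would invoke Proposition \ref{pro:cqt}: since $(\mathcal{A},\D,\om,[~,~]_\om)$ is a dual quasitriangular mock-Lie bialgebra, the identity \eqref{eq:cqt1}, namely $\om([x,y]_\om,z)=-\om(x,z_{(1)})\om(y,z_{(2)})$, holds for all $x,y,z\in\mathcal{A}$. Applying it to each of the three summands, with $x,y,z$ cyclically permuted, turns the left-hand side of \eqref{eq:symp} for $[~,~]_\om$ into
\[
-\om(x,z_{(1)})\om(y,z_{(2)})-\om(y,x_{(1)})\om(z,x_{(2)})-\om(z,y_{(1)})\om(x,y_{(2)}).
\]
The remaining task is to recognize this as a sign times the left-hand side of \eqref{eq:ccybe}.

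Next I would massage the three terms into the three monomial shapes appearing in \eqref{eq:ccybe}, using only the skew-symmetry $\om(a,b)=-\om(b,a)$ and the co-commutativity \eqref{eq:cml-1} of $\D$. The first term $-\om(x,z_{(1)})\om(y,z_{(2)})$ already has the shape of the middle monomial of \eqref{eq:ccybe}. In the second term, skew-symmetry in both slots gives $-\om(y,x_{(1)})\om(z,x_{(2)})=-\om(x_{(1)},y)\om(x_{(2)},z)$, the shape of the first monomial. In the third term, skew-symmetry once followed by swapping the two Sweedler legs of $y$ via \eqref{eq:cml-1} gives $-\om(z,y_{(1)})\om(x,y_{(2)})=\om(y_{(1)},z)\om(x,y_{(2)})=\om(x,y_{(1)})\om(y_{(2)},z)$, the shape of the last monomial. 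Summing the three rewritten terms, the expression becomes exactly
\[
-\Big(\om(x_{(1)},y)\om(x_{(2)},z)+\om(x,z_{(1)})\om(y,z_{(2)})-\om(x,y_{(1)})\om(y_{(2)},z)\Big),
\]
which vanishes because $\om$ solves the ccmLYBe in $(\mathcal{A},\D)$. Hence $(\mathcal{A},[~,~]_\om,\om)$ is a symplectic mock-Lie algebra.

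The argument is essentially bookkeeping in Sweedler notation, so I do not anticipate a genuine obstacle; the one place demanding care is keeping the signs straight when invoking skew-symmetry and co-commutativity, so that the three cyclic summands line up with the three monomials of \eqref{eq:ccybe}. A slightly longer alternative avoids Proposition \ref{pro:cqt} entirely: expand each $[~,~]_\om$ by its definition \eqref{eq:p}, producing a six-term sum of products $\pm\,\om(a_{(1)},b)\om(a_{(2)},c)$, pair the terms using \eqref{eq:cml-1} and skew-symmetry, and apply \eqref{eq:ccybe}; this also works, but the route through \eqref{eq:cqt1} is the most economical.
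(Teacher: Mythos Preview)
Your proposal is correct and follows essentially the same route as the paper's own proof: apply \eqref{eq:cqt1} to each cyclic summand, then use skew-symmetry of $\om$ together with co-commutativity \eqref{eq:cml-1} to rewrite the resulting three terms as $-1$ times the left-hand side of the ccmLYBe \eqref{eq:ccybe}. The paper's computation is line-for-line the same, only with the bookkeeping compressed into a single display.
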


 \begin{proof} For all $x, y, z\in \mathcal{A}$, we have
 \begin{eqnarray*}
 &&\hspace{-13mm}\om([x, y]_\om, z)+\om([y, z]_\om, x)+\om([z, x]_\om, y)\\
 &\stackrel{(\mref{eq:cqt1})}{=}&-\om(x, z_{(1)})\om(y, z_{(2)})-\om(y, x_{(1)})\om(z, x_{(2)})-\om(z, y_{(1)})\om(x, y_{(2)})\\
 &\stackrel{(\ref{eq:cml-1})}{=}&-\om(z_{(1)}, x)\om(z_{(2)}, y)-\om(x_{(1)}, y)\om(x_{(2)}, z)+\om(x, y_{(1)})\om(y_{(2)}, z)\\
 &&\hspace{10mm} \hbox{~(by the skew-symmetry of } \om)\\
 &\stackrel{(\mref{eq:ccybe})}{=}&0,
 \end{eqnarray*}
 as desired.
 \end{proof}

\subsection{\N operators from symplectic mock-Lie algebras}

 \begin{thm}\mlabel{thm:ln} Let $(\mathcal{A}, [~,~], \om)$ be a symplectic mock-Lie algebra and $r\in \mathcal{A}\o \mathcal{A}$ be an element. If $(\mathcal{A}, [~,~], r, \D_r)$ is a quasitriangular mock-Lie bialgebra together with the comultiplication $\D_r$ defined in Eq.(\mref{eq:cop}) and further $(\mathcal{A}, \D_r, \om, [~,~]_{\om})$ is a dual quasitriangular mock-Lie bialgebra together with the multiplication $[~,~]_{\om}$ given in Eq.(\mref{eq:p}).
 Then $(\mathcal{A}, [~,~], N)$ is a \N mock-Lie algebra, where $N:\mathcal{A}\lr \mathcal{A}$ is given by
 \begin{eqnarray}
 &N(x)=\om(x, r^1)r^2,~~\forall~x\in \mathcal{A}.&\mlabel{eq:ys}
 \end{eqnarray}
 \end{thm}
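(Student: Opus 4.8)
The plan is to obtain the Nijenhuis identity \eqref{eq:bja} for $N$ from two auxiliary identities. Writing $[~,~]_{\om}$ for the multiplication of \eqref{eq:p} built from the comultiplication $\D=\D_r$, I will show that, for all $x,y\in\mathcal{A}$,
$$[x,y]_{\om}=[N(x),y]+[x,N(y)]-N([x,y]),\qquad N([x,y]_{\om})=[N(x),N(y)].$$
Granting these, one substitutes the first into the left-hand side of the second to get $N([N(x),y])+N([x,N(y)])-N^2([x,y])=[N(x),N(y)]$, which is exactly \eqref{eq:bja}. (This is the same mechanism as in Theorem \ref{thm:rl}, now with $[~,~]_\cdot=[~,~]_{\om}$: the first identity is \eqref{eq:vb} and the second is \eqref{eq:yq}, and together they force $N$ to be a Nijenhuis operator.) So the task reduces to proving the two displayed identities.

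For the first one, I would expand $[x,y]_{\om}=x_{(1)}\om(x_{(2)},y)-y_{(1)}\om(x,y_{(2)})$ by inserting $\D_r(x)=[x,r^1]\o r^2-r^1\o[x,r^2]$ and the analogous expression for $\D_r(y)$, obtaining four terms. Two of them, $[x,r^1]\om(r^2,y)$ and $-[y,r^1]\om(x,r^2)$, collapse to $[x,N(y)]$ and $[N(x),y]$ once one uses the skew-symmetry of $\om$ and the antisymmetry of $r$, in the form $\om(z,r^2)r^1=-\om(z,r^1)r^2=-N(z)$. For the remaining two, $-r^1\om([x,r^2],y)+r^1\om(x,[y,r^2])$, I would invoke the symplectic cocycle identity \eqref{eq:symp} applied to $\om([x,r^2],y)$, together with $\om(x,[y,r^2])=-\om([r^2,y],x)$ (skew-symmetry of $\om$ and commutativity of $[~,~]$); the $\om([r^2,y],x)$-contributions cancel and one is left with $\om([y,x],r^2)r^1=-N([x,y])$, again by the antisymmetry of $r$. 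Adding the four contributions gives the first identity; note it uses only that $(\mathcal{A},[~,~],\om)$ is symplectic, that $r$ is antisymmetric, and the definitions of $\D_r$ and $[~,~]_{\om}$.

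For the second identity I would bring in the two coboundary hypotheses. Since $(\mathcal{A},[~,~],r,\D_r)$ is quasitriangular, Proposition \ref{pro:qt} yields $(\D_r\o\id)(r)=-r^1\o\br^1\o[r^2,\br^2]$; since $(\mathcal{A},\D_r,\om,[~,~]_{\om})$ is dual quasitriangular, Proposition \ref{pro:cqt} yields $\om([x,y]_{\om},z)=-\om(x,z_{(1)})\om(y,z_{(2)})$. Applying the latter with $z=r^1$,
$$N([x,y]_{\om})=\om([x,y]_{\om},r^1)\,r^2=-\om\big(x,(r^1)_{(1)}\big)\,\om\big(y,(r^1)_{(2)}\big)\,r^2,$$
and since $(r^1)_{(1)}\o(r^1)_{(2)}\o r^2=(\D_r\o\id)(r)=-r^1\o\br^1\o[r^2,\br^2]$, the two minus signs cancel and this equals $\om(x,r^1)\,\om(y,\br^1)\,[r^2,\br^2]=[\om(x,r^1)r^2,\,\om(y,\br^1)\br^2]=[N(x),N(y)]$.

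I expect the bookkeeping in the first identity to be the only real obstacle: the cancellation inside $-r^1\om([x,r^2],y)+r^1\om(x,[y,r^2])$ depends on pairing the three cyclic terms of \eqref{eq:symp} with the correct occurrences of $r^2$ and of the bracket, and one must keep the convention $N(x)=\om(x,r^1)r^2=-\om(x,r^2)r^1$ rigidly consistent throughout. Once the first identity is in the clean form above, everything else is formal.
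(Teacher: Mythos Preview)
Your argument is correct and genuinely different from the paper's. You factor the Nijenhuis condition through the auxiliary bracket $[~,~]_{\om}$, proving separately the two identities $[x,y]_{\om}=[N(x),y]+[x,N(y)]-N([x,y])$ (from the symplectic condition and the antisymmetry of $r$) and $N([x,y]_{\om})=[N(x),N(y)]$ (from Propositions~\ref{pro:qt} and~\ref{pro:cqt}); this is precisely the mechanism of Theorem~\ref{thm:rl} with $[~,~]_\cdot=[~,~]_{\om}$, and it makes transparent which hypothesis is responsible for which half of the Nijenhuis identity. The paper instead writes out the ccmLYBe \eqref{eq:ccybe} for $\D_r$, combines it with the symplectic identity \eqref{eq:symp} to extract an intermediate relation (labelled \eqref{eq:cqt-sym}), and then attacks the full expression $[N(x),N(y)]-N([N(x),y])-N([x,N(y)])+N^2([x,y])$ directly, reducing it to zero by successive applications of \eqref{eq:cqt-sym}, \eqref{eq:symp} and \eqref{eq:cybe}. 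Your route is more structural and explains \emph{why} the operator $N$ is Nijenhuis (the deformed bracket $[~,~]_{\om}$ is exactly the one predicted by the trivial-deformation picture), at the cost of invoking the two auxiliary Propositions; the paper's route is a self-contained single computation but hides this interpretation. Both use the same hypotheses in full.
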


 \begin{proof} By Eqs.(\mref{eq:ccybe}) and (\mref{eq:cop}), for all $x, y, z\in \mathcal{A}$, one has
 \begin{eqnarray*}
 &\hspace{-15mm}0=\om([x,r^1], y)\om(r^2, z)-\om(r^1, y)\om([x,r^2], z)
 +\om(x, [z,r^1])\om(y, r^2)\\
 &-\om(x, r^1)\om(y, [z,r^2])-\om(x, [y,r^1])\om(r^2, z)+\om(x,r^1)\om([y,r^2], z)&\\
 &\hspace{-12mm}\stackrel{(\ref{eq:1-1})}{=}\om([x, r^1], y)\om(r^2, z)-\om(r^1, y)\om([x,r^2], z)-\om(r^1, y)\om([r^2,z], x)&\\
 &+\om(x, r^1)\om([r^2,z], y)+\om([r^1,y], x)\om(r^2, z)+\om(x, r^1)\om([y, r^2], z)&\\
 &\hspace{7mm} \hbox{~(by the skew-symmetry of } \om \hbox{~and the antisymmetry of } r)&
 \end{eqnarray*}
 Since, further, $(\mathcal{A}, [~,~], \om)$ is a symplectic mock-Lie algebra, we obtain
 \begin{eqnarray}
 &-\om([y,x], r^1)\om(r^2, z)+\om(r^1, y)\om([z,x], r^2)-\om(x, r^1)\om([z,y], r^2)=0.&\mlabel{eq:cqt-sym}
 \end{eqnarray}
 Now we can check that $N$ defined in Eq.(\mref{eq:ys}) is a Nijenhuis operator on $(\mathcal{A}, [~,~])$.
 \begin{eqnarray*}
 &&\hspace{-15mm}[N(x), N(y)]-N([N(x), y]+[x, N(y)]-N([x, y]))\\
 &\stackrel{(\mref{eq:ys})}{=}&\om(x, r^1)\om(y, \bar{r}^1)[r^2, \bar{r}^2]
 -\om(x, r^1)\om([r^2, y], \bar{r}^1)\bar{r}^2-\om(y, r^1)\om([x, r^2], \bar{r}^1)\bar{r}^2\\
 &&+\om([x, y], r^1)\om(r^2, \bar{r}^1)\bar{r}^2\\
 &\stackrel{(\mref{eq:cqt-sym})}{=}&\om(x, r^1)\om(y, \bar{r}^1)[r^2, \bar{r}^2]
 -\om(x, r^1)\om([r^2, y], \bar{r}^1)\bar{r}^2-\om(y, r^1)\om([x, r^2], \bar{r}^1)\bar{r}^2\\
 &&+\om(r^1, y)\om([\br^1, x], r^2)\br^2-\om(x, r^1)\om([\br^1, y], r^2)\br^2\\
 &\stackrel{(\mref{eq:symp})}{=}&\om(x, r^1)\om(y, \bar{r}^1)[r^2, \bar{r}^2]+\om(x, r^1)\om([\br^1, r^2], y)\br^2+\om(y, r^1)\om([r^2, \br^1], x)\br^2\\
 &\stackrel{(\mref{eq:cybe})}{=}&0. \hbox{~(also by the skew-symmetry of } \om \hbox{~and the antisymmetry of } r).
 \end{eqnarray*}
 These finish the proof.
 \end{proof}

 \begin{ex}\mlabel{ex:thm:ln} Let $(\mathcal{A},[~,~])$ be a 4-dimensional mock-Lie algebra with basis $\{e_1,e_2,e_3,e_4\}$ and the product $[~,~]$ given by the following table
 \begin{center}
        \begin{tabular}{r|rrrr}
          $[~,~]$ & $e_1$  & $e_2$ & $e_3$ & $e_4$  \\
          \hline
           $e_1$ & $e_2$  & $0$  & $e_4$ & $0$\\
           $e_2$ & $0$  &  $0$  & $0$ & $0$\\
           $e_3$ & $e_4$ & $0$ & $0$ & $0$\\
           $e_4$ & $0$ & $0$ & $0$ & $0$\\
        \end{tabular}.
        \end{center}
       Set $r=e_2 \o e_3-e_3 \o e_2$, then $(\mathcal{A},[~,~],r,\D_r)$ is a quasitriangular mock-Lie bialgebra, where $\D_r$ is given by:
        $$\left\{
            \begin{array}{l}
             \D_r(e_1)=-e_4\o e_2-e_2\o e_4\\
             \D_r(e_2)=0\\
             \D_r(e_3)=0\\
             \D_r(e_4)=0\\
            \end{array}
            \right..$$
 Set
 \begin{center}
        \begin{tabular}{r|rrrr}
          $\omega$ & $e_1$  & $e_2$ & $e_3$ & $e_4$  \\
          \hline
           $e_1$ & $0$  & $0$  & $\l$ & $\g$\\
           $e_2$ & $0$  &  $0$  & $2\g$ & $0$\\
           $e_3$ & $-\l$ & $2\g$ & $0$ & $0$\\
           $e_4$ & $-\g$ & $0$ & $0$ & $0$\\
        \end{tabular}\quad ($\l, \g$ are parameters).
        \end{center}
 Then $(\mathcal{A},[~,~],\omega)$ is a symplectic mock-Lie algebra. Specially, when
 \begin{center}
        \begin{tabular}{r|rrrr}
          $\omega$ & $e_1$  & $e_2$ & $e_3$ & $e_4$  \\
          \hline
           $e_1$ & $0$  & $0$  & $\l$ & $0$\\
           $e_2$ & $0$  &  $0$  & $0$ & $0$\\
           $e_3$ & $-\l$ & $0$ & $0$ & $0$\\
           $e_4$ & $0$ & $0$ & $0$ & $0$\\
        \end{tabular},
        \end{center}
 $(\mathcal{A},\D_r,\omega,[~,~]_\omega)$ is a dual quasitriangular mock-Lie bialgebra. Then by Theorem \ref{thm:ln}, let
   $$\left\{
            \begin{array}{l}
             N(e_1)=-\l e_2\\
             N(e_2)=0\\
             N(e_3)=0\\
             N(e_4)=0\\
            \end{array}
            \right., $$
 then $(\mathcal{A}, [~,~], N)$ is a Nijenhuis mock-Lie algebra.
 \end{ex}

 \begin{defi}\mlabel{de:csym} Let $(\mathcal{A}, \D)$ be a mock-Lie coalgebra and $r\in \mathcal{A}\o \mathcal{A}$ be an antisymmetric element. Assume that
 \begin{eqnarray}\mlabel{eq:csymp1}
 {r^1}_{(1)}\o {r^1}_{(2)}\o r^2+r^2\o {r^1}_{(1)}\o {r^1}_{(2)}+{r^1}_{(2)}\o r^2\o   {r^1}_{(1)}=0.
 \end{eqnarray}
 Then $(\mathcal{A}, \D)$ is called {\bf a cosymplectic mock-Lie coalgebra} denoted by $(\mathcal{A}, \D, r)$.
 \end{defi}

 \begin{pro}\mlabel{pro:de:csym} Let $(\mathcal{A}, [~,~])$ be a mock-Lie algebra and $r\in \mathcal{A}\o \mathcal{A}$ antisymmetric. If $(\mathcal{A}, [~,~], r, \D_r)$ is a quasitriangular mock-Lie bialgebra, where $\D_r$ is defined by Eq.(\mref{eq:cop}), then $(\mathcal{A}, \D_r, r)$ is a cosymplectic mock-Lie coalgebra.
 \end{pro}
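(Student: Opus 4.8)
The plan is to follow the template of the proof of Proposition \ref{pro:de:sym}, but on the tensor side: pass through the quasitriangular identity of Proposition \ref{pro:qt}, and then recognize the outcome as a consequence of the cmLYBe \eqref{eq:cybe}. Since $(\mathcal{A},[~,~],r,\D_r)$ is a quasitriangular mock-Lie bialgebra, $r$ is an antisymmetric solution of \eqref{eq:cybe} and $(\mathcal{A},\D_r)$ is a mock-Lie coalgebra; moreover Proposition \ref{pro:qt} gives Eq. \eqref{eq:qt1}, that is $(\D_r\o\id)(r)={r^1}_{(1)}\o{r^1}_{(2)}\o r^2=-r^1\o\br^1\o[r^2,\br^2]$.

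First I would note that the left-hand side of the cosymplectic condition \eqref{eq:csymp1} is exactly the cyclic sum over the three tensor slots of $(\D_r\o\id)(r)={r^1}_{(1)}\o{r^1}_{(2)}\o r^2$. Substituting \eqref{eq:qt1}, the condition \eqref{eq:csymp1} becomes equivalent to the vanishing of $\Sigma:=r^1\o\br^1\o[r^2,\br^2]+[r^2,\br^2]\o r^1\o\br^1+\br^1\o[r^2,\br^2]\o r^1$, the cyclic symmetrization of $r^1\o\br^1\o[r^2,\br^2]$.

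Next I would return to \eqref{eq:cybe}, written as $P_1+P_2+P_3=0$ with $P_1=[r^1,\br^1]\o r^2\o\br^2$, $P_2=r^1\o\br^1\o[r^2,\br^2]$ and $P_3=-r^1\o[\br^1,r^2]\o\br^2$, and apply the cyclic-symmetrization operator to this identity, obtaining $\mathrm{cyc}(P_1)+\mathrm{cyc}(P_2)+\mathrm{cyc}(P_3)=0$. Using the antisymmetry of $r$ (each interchange of the two legs of one copy of $r$ costs a sign), the commutativity of $[~,~]$, and a relabelling of the two copies of $r$, one checks that $\mathrm{cyc}(P_1)=\mathrm{cyc}(P_2)=\mathrm{cyc}(P_3)=\Sigma$. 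Here it is convenient to observe first that, after relabelling the two copies of $r$, commutativity of $[~,~]$ makes the summand $P_2$ invariant under the transposition of the first two slots, so that $\Sigma=\mathrm{cyc}(P_2)$ is invariant under the whole symmetric group on the three slots; this is what lets one absorb the slot permutations relating $\mathrm{cyc}(P_1)$, $\mathrm{cyc}(P_2)$, $\mathrm{cyc}(P_3)$. It then follows that $3\,\Sigma=0$, hence $\Sigma=0$ because $\mathrm{char}\,K=0$, which is exactly \eqref{eq:csymp1}; therefore $(\mathcal{A},\D_r,r)$ is a cosymplectic mock-Lie coalgebra.

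The main obstacle is the bookkeeping in the last step: one must track carefully the signs coming from the antisymmetry of $r$ applied to individual tensor factors, their interplay with the commutativity of the bracket, and the slot permutations identifying the three cyclic sums, in order to establish $\mathrm{cyc}(P_1)=\mathrm{cyc}(P_2)=\mathrm{cyc}(P_3)$. There is no conceptual difficulty beyond this — the argument is the tensor-side dual of the proof of Proposition \ref{pro:de:sym}, with \eqref{eq:qt1}, the antisymmetry of $r$, and \eqref{eq:cybe} playing the roles of \eqref{eq:cqt1}, the skew-symmetry of $\om$, and \eqref{eq:ccybe}.
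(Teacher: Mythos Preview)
Your proof is correct, and the paper's own proof is merely the line ``Straightforward by Eqs.~(\ref{eq:cybe}) and (\ref{eq:cop})'', so there is little to compare; you are using exactly those two ingredients, routed through Proposition~\ref{pro:qt}.

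One remark: your detour through $3\Sigma=0$ and the hypothesis $\mathrm{char}\,K=0$ is unnecessary. The very manipulations you outline (swap the legs of each copy of $r$ using antisymmetry, use commutativity of $[~,~]$, relabel the two copies) in fact show that $P_1$ and $P_3$ are themselves cyclic permutations of $P_2$: after two leg-swaps $P_1=[r^1,\br^1]\o r^2\o\br^2$ becomes $[r^2,\br^2]\o r^1\o\br^1$, and after one leg-swap and a relabelling $P_3=-r^1\o[\br^1,r^2]\o\br^2$ becomes $\br^1\o[r^2,\br^2]\o r^1$. Hence the three summands of the cmLYBe \eqref{eq:cybe} are exactly the three cyclic shifts of $P_2$, so $\Sigma=\mathrm{cyc}(P_2)=P_1+P_2+P_3=0$ directly. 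This is presumably the ``straightforward'' computation the paper has in mind; it avoids applying the cyclic symmetrizer to \eqref{eq:cybe} and then dividing by $3$.
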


 \begin{proof} Straightforward by Eqs.(\ref{eq:cybe}) and (\ref{eq:cop}).
 \end{proof}

 \begin{defi} A {\bf \N mock-Lie coalgebra} is a triple $(\mathcal{A}, \D, S)$, where $(\mathcal{A}, \D)$ is a mock-Lie coalgebra and $S$ is a \N operator on $(\mathcal{A}, \D)$, i.e., the following identity  holds:
 \begin{eqnarray}\mlabel{eq:gd}
 S(x_{(1)})\o S(x_{(2)})+S^2(x)_{(1)}\o S^2(x)_{(2)}=S(S(x)_{(1)})\o S(x)_{(2)}+S(x)_{(1)}\o S(S(x)_{(2)}),~~\forall~x\in \mathcal{A}.
 \end{eqnarray}
 \end{defi}

 \begin{thm}\mlabel{thm:cln} Let $(\mathcal{A}, \D, r)$ be a cosymplectic mock-Lie coalgebra and $\om\in (\mathcal{A}\o \mathcal{A})^*$. If $(\mathcal{A}, \D, \om$, $[~,~]_\om)$ is a dual quasitriangular mock-Lie bialgebra together with the multiplication $[~,~]_\om$ defined in Eq.(\mref{eq:p}) and further $(\mathcal{A}, [~,~]_\om, r, \D_r)$ is a quasitriangular mock-Lie bialgebra together with the comultiplication $\D_r$ defined in Eq.(\mref{eq:cop}).
 Then $(\mathcal{A}, \D, S)$ is a \N mock-Lie coalgebra, where $S:\mathcal{A}\lr \mathcal{A}$ is given by
 \begin{eqnarray}
 &S(x)=r^1\om(r^2, x),~~\forall~x\in \mathcal{A}.&\mlabel{eq:cys}
 \end{eqnarray}
 \end{thm}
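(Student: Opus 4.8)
The statement is the dual (co-algebra) mirror of Theorem~\ref{thm:ln}, so the natural strategy is to dualize the proof of that theorem step by step, replacing $\om(x,r^1)r^2$ by $r^1\om(r^2,x)$ and the cmLYBe by the cosymplectic condition \eqref{eq:csymp1}. First I would record the two hypotheses in usable form: since $(\mathcal{A},\D,\om,[~,~]_\om)$ is dual quasitriangular, Proposition~\ref{pro:cqt} gives \eqref{eq:cqt1} (or \eqref{eq:cqt2}), i.e. $\om([x,y]_\om,z)=-\om(x,z_{(1)})\om(y,z_{(2)})$, with $[~,~]_\om$ as in \eqref{eq:p}; and since $(\mathcal{A},[~,~]_\om,r,\D_r)$ is quasitriangular, Proposition~\ref{pro:qt} gives \eqref{eq:qt2}, i.e. $(\id\o\D_r)(r)=[r^1,\br^1]_\om\o r^2\o\br^2$, while Proposition~\ref{de:qt} tells us $r$ solves the cmLYBe for $[~,~]_\om$. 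These, together with the cosymplectic identity \eqref{eq:csymp1} for $\D$ and $r$, are the only inputs.

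\textbf{Key steps.} The proof of \eqref{eq:gd} amounts to expanding
$S(x_{(1)})\o S(x_{(2)})+S^2(x)_{(1)}\o S^2(x)_{(2)}-S(S(x)_{(1)})\o S(x)_{(2)}-S(x)_{(1)}\o S(S(x)_{(2)})$
using \eqref{eq:cys}, so that every term becomes $r^1\o\br^1$ (or a permutation) multiplied by a product of two $\om$-values applied to components of $\D(x)$ and to $r^2,\br^2$. Concretely, $S(x_{(1)})\o S(x_{(2)})=r^1\o\br^1\,\om(r^2,x_{(1)})\om(\br^2,x_{(2)})$, and using $\D(S(x))=\D_r(x)$-type expressions one rewrites $S^2(x)_{(1)}\o S^2(x)_{(2)}$ and the two mixed terms similarly. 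The plan is: (i) use the dual-quasitriangularity identity \eqref{eq:cqt2} to collapse the $\om(r^2,x_{(1)})\om(\br^2,x_{(2)})$ pattern into a single $\om$ evaluated on $[r^2,\br^2]_\om$ or on $[\,\cdot\,,\cdot\,]_\om$ of $r$-components; (ii) symmetrize using the cosymplectic relation \eqref{eq:csymp1}, which is exactly the coalgebra analogue of the symplectic identity \eqref{eq:symp} used in Theorem~\ref{thm:ln}, to produce a cyclic sum over the three tensor slots; (iii) invoke the cmLYBe \eqref{eq:cybe} for $(\mathcal{A},[~,~]_\om)$ — guaranteed by Proposition~\ref{de:qt} — to see that this cyclic sum vanishes. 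This is literally the $\om([x,y],z)$-to-$0$ argument of Theorem~\ref{thm:ln} read in the mirror, so I expect the three displayed equalities there (labelled \eqref{eq:cqt-sym}, \eqref{eq:symp}, \eqref{eq:cybe} in that proof) to have exact duals here.

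\textbf{Main obstacle.} The real bookkeeping difficulty is the Sweedler-index and tensor-slot matching: \eqref{eq:gd} is an identity in $\mathcal{A}\o\mathcal{A}$, whereas the cosymplectic identity \eqref{eq:csymp1} and the cmLYBe \eqref{eq:cybe} live in $\mathcal{A}\o\mathcal{A}\o\mathcal{A}$, so one must pair one slot against an auxiliary $\om(-,z)$ (equivalently, test \eqref{eq:gd} against an arbitrary $z\in\mathcal{A}$ in one tensor factor, as the proof of Theorem~\ref{thm:ln} does by working with $\om(N(x),\cdot)$-type pairings) to bring everything into $(\mathcal{A}\o\mathcal{A})^*$-land where \eqref{eq:ccybe}/\eqref{eq:cqt2} apply, and then strip the test vector at the end using nondegeneracy/arbitrariness. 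Keeping the coassociativity-type term $\D(r^1)$ versus $\D(r^2)$ straight when computing $\D(S(x))$ — one needs $\D(S(x))=\D(r^1)\om(r^2,x)$ and then the cosymplectic rearrangement — is where sign and index errors are most likely. Once the pairing convention is fixed, each step is a direct transcription of the corresponding step in the proof of Theorem~\ref{thm:ln}, and I would present it in the same four-line ``$\stackrel{(\cdot)}{=}$'' format, citing \eqref{eq:cys}, \eqref{eq:csymp1}, \eqref{eq:cqt2} (or \eqref{eq:cqt1}), and \eqref{eq:cybe} in turn, closing with $0$.
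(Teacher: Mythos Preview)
Your proposal is correct and is exactly the approach the paper takes: the paper's entire proof of this theorem is the single line ``Similar to Theorem~\ref{thm:ln}'', and your plan to dualize that argument step by step---replacing the symplectic identity~\eqref{eq:symp} by the cosymplectic one~\eqref{eq:csymp1}, swapping the roles of the ccmLYBe and the cmLYBe, and checking the co-Nijenhuis identity~\eqref{eq:gd} instead of~\eqref{eq:bja}---is precisely what that sentence instructs. Your outline is in fact considerably more detailed than what the paper supplies.
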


 \begin{proof} Similar to Theorem \ref{thm:ln}.
 \end{proof}

 \section{Deformations and extensions of \N mock-Lie algebras}\label{Sec3} In this section, we explore deformations and extensions of Nijenhuis mock-Lie bialgebras. We  introduce first the concept of a one-parameter formal deformation for a mock-Lie algebra and the Nijenhuis operator, then consider infinitesimal deformations within the cohomology framework of Nijenhuis mock-Lie algebras. Subsequently, we investigate abelian extensions of Nijenhuis mock-Lie algebras equipped with a given representation. We refer to \cite{BB2} for the cohomology and deformations of mock-Lie algebras. The cohomology  theory for  Nijenhuis mock-Lie algebras is very similar, one considers, as 2-cochains, pairs consisting of 2-cochain of a mock-Lie algebra and a linear map. The 2-cocycles and cohomology classes are described below.
 
\subsection{Deformations of Nijenhuis mock-Lie algebras}\label{sec4}
In this section, we study one-parameter formal deformations of Nijenhuis mock-Lie algebra. We denote the bracket $[\cdot,\cdot]$ by $\mu.$

\begin{defi}
A one-parameter formal deformation of a Nijenhuis mock-Lie algebra $(\mathcal{A}_T,\mu)$ is a pair of two power series  $(\mu_t,N_t)$, where 
\[ \mu_t=\sum _{i=0}^{\infty}\mu_it^i, ~ \mu_{i} \in Hom(\mathcal{A}\otimes\mathcal{A},\mathcal{A}),~~~~ N_t= \sum_{i=0}^{\infty} N_it^i,~ N_i \in Hom(\mathcal{A},\mathcal{A}),\]
 such that $(\mathcal{A}[[t]],\mu_t,N_t)$ is a Nijenhuis mock-Lie algebra with $(\mu_0,N_0)=(\mu ,N)$, where $\mathcal{A}[[t]]$, the space of formal power series in $t$ with coefficients in $\mathcal{A}$, which is a $K[[t]]$ module, $K$ being the ground field of $(\mathcal{A},\mu,N)$.
\end{defi}
The above definition holds if and only if, for all $x,y,z \in \mathcal{A}$, the following conditions are satisfied 
\[\mu_t(x,\mu_t(y,z))+\mu_t(z,\mu_t(x,y))+\mu_t(y,\mu_t(z,x))=0,\]
and \[ \mu_t(N_t(x),N_t(y))=N_t(\mu_t(x,N_t(y))+\mu_t(N_t(x),y)-N_t(x,y)).\]

Expanding the above equations and equating the coefficients of $t^n$ from both sides, we have for $n\geq 0$
\begin{align}\label{JacobiDeforCondition}
\sum _{\substack{i+j=n \\i,j\geq 0}} \Big(\mu_i(x,\mu_j (y,z))+\mu_i(z,\mu _j(x,y))+ \mu_i(y, \mu_j (z,x))\Big)=0,
\end{align}
and 
\begin{align}\label{NijenDeforCondition}
\sum_{\substack{i+j+k=n \\ i,j,k \geq 0}} \mu_i(N_j(u),N_k(v))=\sum_{\substack{i+j+k=n \\ i,j,k \geq 0}}\Big(N_i(\mu_j(N_k(u),v))+N_i(\mu_j(u,N_k(v)))-N_iN_k(\mu_j(u,v))\Big).
\end{align}
Observe that for $n=0$, the above conditions are exactly the conditions in the definitions of  mock-Lie  algebra and Nijenhuis operator.

Setting $n=1$ in the equation \eqref{JacobiDeforCondition}, we get 
\begin{equation}
\mu (x,\mu_1(y,z))+\mu_1(x,\mu (y,z))+ \mu (z,\mu_1(x,y))+\mu_1(z,\mu (x,y)) \\
+\mu_1 (y, \mu (z,x))+\mu (y, \mu_1(z,x))=0.\label{CocycleCond1}   
\end{equation} 

Again, setting $n=1$ in \eqref{NijenDeforCondition}, we get 
\begin{align}\nonumber
&\mu_1(N(x_1),N(x_2))+\mu (N_1(x_1),N(x_2))+\mu (N(x_1),N_1(x_2)) \\\nonumber
-&N_1(\mu (N(x_1),x_2)) -N(\mu (N_1(x_1),x_2)) -N(\mu _1 (N(x_1),x_2)) \\ \nonumber
-&N_1(\mu (x_1,N(x_2)))-N(\mu _1 (x_1,N(x_2)))-N(\mu (x_1,T_1(x_2))) \\+&N_1N(\mu(x,y))+NN_1(\mu(x,y))+N^2(\mu_1(x,y))\
= \ 0.\label{CocycleCond2}
\end{align}
Thus, by the above conditions, the pair $(\mu_1,N_1)$ is called  a $2$-cocycle.
\begin{defi}
The infinitesimal of the deformation $(\mu_t, N_t)$ is the pair $(\mu_1, N_1)$. Suppose more  generally that $(\mu_n, N_n)$ is the first non-zero term of $(\mu_t, N_t)$ after $(\mu_0, N_0)$, such $(\mu_n, N_n)$ is called a $n$-infinitesimal of the deformation.
\end{defi}

\begin{thm}
Let $(\mu_t,N_t)$ be a  one-parameter formal deformation of a Nijenhuis mock-Lie algebra $(\mathcal{A},\mu,N)$. Then $n$-infinitesimal of the deformation is a $2$-cocycle.
\end{thm}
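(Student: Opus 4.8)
The plan is to mimic exactly the $n=1$ computation already spelled out in the excerpt, but performed at the lowest surviving order $n$ of a deformation whose first nonvanishing correction terms are $(\mu_n,N_n)$. Concretely, I would start from the two families of identities \eqref{JacobiDeforCondition} and \eqref{NijenDeforCondition}, which hold for every $n\ge 0$ because $(\mathcal A[[t]],\mu_t,N_t)$ is by hypothesis a Nijenhuis mock-Lie algebra, and then specialize to the degree equal to the order of the infinitesimal.

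First I would isolate the degree-$n$ Jacobi identity. Since $(\mu_n,N_n)$ is the first nonzero term after $(\mu_0,N_0)=(\mu,N)$, we have $\mu_i=0$ and $N_i=0$ for $1\le i\le n-1$. Hence in the sum $\sum_{i+j=n}$ appearing in \eqref{JacobiDeforCondition}, the only pairs $(i,j)$ that can contribute are $(0,n)$ and $(n,0)$; all intermediate pairs involve some $\mu_i$ with $1\le i\le n-1$, which vanishes. Collecting these two contributions gives precisely
\begin{align*}
&\mu(x,\mu_n(y,z))+\mu_n(x,\mu(y,z))+\mu(z,\mu_n(x,y))+\mu_n(z,\mu(x,y))\\
&\qquad{}+\mu_n(y,\mu(z,x))+\mu(y,\mu_n(z,x))=0,
\end{align*}
which is the analogue of \eqref{CocycleCond1} with $\mu_1$ replaced by $\mu_n$.

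Next I would do the same bookkeeping for the Nijenhuis compatibility \eqref{NijenDeforCondition} in degree $n$. Here the sum is over triples $i+j+k=n$ with $i,j,k\ge 0$; a triple contributes only if each index used in a $\mu$ or $N$ slot lies in $\{0,n\}$, so the surviving triples are exactly those with one index equal to $n$ and the other two equal to $0$, namely $(n,0,0)$, $(0,n,0)$ and $(0,0,n)$ on each side. Writing out these three terms on the left and the three-times-three terms on the right yields the same relation as \eqref{CocycleCond2} with every subscript $1$ replaced by $n$, which is by definition the second component of the $2$-cocycle condition. Combining the two displayed relations, the pair $(\mu_n,N_n)$ satisfies exactly the $2$-cocycle equations, so the $n$-infinitesimal is a $2$-cocycle.

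The only real obstacle is purely organizational rather than mathematical: one must be careful that the vanishing $\mu_i=N_i=0$ for $1\le i\le n-1$ really does kill every mixed term in both multi-index sums, including the compositions like $N_i N_k$ in \eqref{NijenDeforCondition} where two distinct indices appear inside a single summand — but any such summand with $i+j+k=n$ and not all of $i,j,k$ in $\{0,n\}$ must have at least one index strictly between $0$ and $n$, forcing the corresponding factor to vanish. Once that is observed the result is immediate, and I would present it in two or three lines of prose plus the two displayed equations above.
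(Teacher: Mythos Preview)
Your proposal is correct and matches the paper's intended argument: the paper actually gives no explicit proof of this theorem, treating it as an immediate consequence of the degree-$1$ computation preceding it, and your write-up simply makes that implicit step explicit by observing that the vanishing of $\mu_i,N_i$ for $1\le i\le n-1$ collapses the degree-$n$ identities \eqref{JacobiDeforCondition} and \eqref{NijenDeforCondition} to the same form as \eqref{CocycleCond1} and \eqref{CocycleCond2} with subscript $1$ replaced by $n$.
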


\begin{defi}
Let $(\mu_t,N_t)$ and $(\mu_t^{'},N_t^{'})$ be two one-parameter formal deformations of a Nijenhuis mock-Lie algebra $(\mathcal{A},\mu,N )$. A formal isomorphism from 
$(\mu_t,N_t)$ to $(\mu_t^{'},N_t^{'})$ is a formal power series $\psi _t=\sum _{i=0}\psi_i t^i : \mathcal{A}[[t]] \rightarrow \mathcal{A}[[t]]$, where $\psi_i: \mathcal{A} \rightarrow \mathcal{A}$ are linear maps with $\psi_0$ is the identity map on $\mathcal{A}$ and also the following conditions are satisfied.
\begin{align}\label{morph1}
&\psi_t \circ \mu^{'}_t=\mu_t \circ (\psi_t \otimes \psi_t),\\\label{morph2}
& \psi_t \circ N_t^{'}=N_{t} \circ \psi_t.  
\end{align}
In this case, we say that $(\mu_t,N_t)$ and $(\mu_t^{'},N_t^{'})$  are equivalent.
Note that Equations \eqref{morph1} and \eqref{morph2} can be written as follows respectively:
\begin{align}\label{MorphDefCond}
&\sum_{\substack {i+j=n \\ i,j\geq 0}}\psi _i(\mu_j^{'}(x,y))=\sum_{\substack {i+j+k=n \\ i,j,k\geq 0}}\mu_i(\psi_j(x),\psi_k(y)),~~ x,y \in \mathcal{A},\\
\label{NijMorphCond}
& \sum_{\substack {i+j=n \\ i,j\geq 0}}\psi _i \circ N^{'}_j=\sum_{\substack {i+j=n \\ i,j\geq 0}}  N_i \circ \psi _j.
\end{align}
\end{defi}
Let $\psi_t : (\mu_t,N_t) \rightarrow  (\mu_t^{'},N_t^{'})$ be a formal isomorphism. Now setting $n=1$ in Equations \eqref{MorphDefCond} and \eqref{NijMorphCond}, we get 
\begin{align*}
&\mu^{'}_1(x,y)=\mu_1(x,y)+\mu (x,\psi_1 (y))+\mu (\psi_1(x),y)-\psi_1(\mu (x,y)) ,~~\forall x,y \in \mathcal{A},\\
& N_1^{'}=N_1+N \circ \psi_1-\psi _1 \circ N.
\end{align*}
Therefore, 
the infinitesimals of two equivalent one-parameter formal deformations of Nijenhuis mock-Lie algebra $(\mathcal{A}_N , \mu )$ are in the same cohomology class. 

\subsection{Abelian extensions of Nijenhuis mock-Lie algebras}\label{sec5}
Let $(\mathcal{A}, [~,~], N)$ be a Nijenhuis mock-Lie algebra and $V$ be a vector space. Observe that if $N_V$ is a linear operator on the vector space $V$ and  if we define the bracket by $\mu(x,y)=0$ for all $x,y \in V$. Then $(V,\mu,N_V)$ has a structure of Nijenhuis mock-Lie algebra.
\begin{defi}
 An abelian extension of the Nijenhuis mock-Lie algebra $(\mathcal{A},[~,~],N)$ is a short exact sequence of morphisms of Nijenhuis mock-Lie algebra 
 \[
\begin{tikzcd}
0 \arrow[r] & (V,\mu,N_V) \arrow[r ,"i"] & (\hat{\mathcal{A}},[~,~]_{\wedge},\hat{N}) \arrow[r,"p"] & (\mathcal{A},[~,~],N) \arrow [r] & 0 
\end{tikzcd} ,
\]
that is, there exists a commutative diagram 
\[
\begin{tikzcd}
0 \arrow[r] & V \arrow[r ,"i"] \arrow[d,"N_V"]& \hat{\mathcal{A}} \arrow[d,"\hat{N}"]\arrow[r,"p"] & \mathcal{A} \arrow [r]\arrow[d,"N"]  & 0 \\
 0 \arrow[r] & V \arrow[r ,"i"] & \hat{\mathcal{A}} \arrow [r,"p"]  &  \mathcal{A} \arrow [r]  & 0
\end{tikzcd}
\]
where $\mu (a,b)=0$ for all $a,b \in V.$ In this case we say that $(\hat{\mathcal{A}},[~,~]_{\wedge},\hat{N})$ is an abelian extension of the Nijenhuis mock-Lie algebra $(\mathcal{A},[~,~],N)$ by $(V,\mu,N_V).$
 \end{defi}

\begin{defi}
Let $(\hat{\mathcal{A}},[~,~]_{\wedge_1},\hat{N_1})$ and $(\hat{\mathcal{A}}  ,[~,~]_{\wedge_2},\hat{N_2})$ be two abelian extensions of $(\mathcal{A},[~,~],N)$ by $(V,\mu,N_V)$. Then this two extensions are said to be isomorphic if there exists an isomorphism of Nijenhuis mock-Lie algebra $\xi : (\hat{\mathcal{A}},[~,~]_{\wedge_1},\hat{N_1}) \rightarrow  (\hat{\mathcal{A}}  ,[~,~]_{\wedge_2},\hat{N_2}) $ such that the following diagram is commutative: 
\[
\begin{tikzcd}
0 \arrow[r] & (V,\mu,N_V) \arrow[r ,"i"] \arrow[d,equal]& (\hat{\mathcal{A}}  ,[~,~]_{\wedge_1},\hat{N_1})\arrow[d,"\xi"]\arrow[r,"p"] & (\mathcal{A},[~,~],N) \arrow [r]\arrow[d,equal]  & 0 \\
 0 \arrow[r] & (V,\mu,N_V) \arrow[r ,"i"] & (\hat{\mathcal{A}}  ,[~,~]_{\wedge_2},\hat{N_2}) \arrow [r,"p"]  &  (\mathcal{A},[~,~],N) \arrow [r]  & 0.
\end{tikzcd}
\]
\end{defi}

\begin{defi}
A section of an abelian extension $(\hat{\mathcal{A}}  ,[~,~]_{\wedge},\hat{N})$ of $(\mathcal{A},[~,~],N)$ by $(V, \mu,N_V)$ is a linear map $s : \mathcal{A} \rightarrow \hat{\mathcal{A}}$ such that $p \circ s= id_{\mathcal{A}}.$
 \end{defi}
Let $(\hat{\mathcal{A}}  ,[~,~]_{\wedge},\hat{N})$ be an abelian extension of $(\mathcal{A},[~,~],N)$ by $(V,\mu,N_V)$ with a section $s: \mathcal{A} \rightarrow \hat{\mathcal{A}}$. Define $\bar{\rho}_V: \mathcal{A}  \rightarrow End(V)$  by  $$\bar{\rho}_V(x)(u)=[s(x),u]_{\wedge},\quad \forall x\in \mathcal{A}, u \in V.$$

\begin{thm}
Under the above notations, $(V,\bar{\rho}_V,N_V)$ is a representation of the  Nijenhuis mock-Lie algebra  $(\mathcal{A},[~,~],N).$ Moreover, this representation is independent of the choice of sections.
\end{thm}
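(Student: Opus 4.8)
The plan is to construct the representation data directly from a section and verify the three required identities: the mock-Lie representation relation \eqref{eq:kp} for $\bar\rho_V$, and the Nijenhuis compatibility relation \eqref{eq:1} involving $N_V$; then show independence of the section.

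First I would fix a section $s:\mathcal{A}\to\hat{\mathcal{A}}$ with $p\circ s=\id_\mathcal{A}$. Since $p$ is a morphism of mock-Lie algebras and $\ker p = i(V)$, for all $x,y\in\mathcal{A}$ we have $[s(x),s(y)]_\wedge - s([x,y])\in V$; call this element $\psi(x,y)$ (a $2$-cochain), and similarly $\hat N(s(x)) - s(N(x))\in V$; call it $\chi(x)$. Also, because $V$ is an abelian ideal with $\mu|_V=0$, the bracket $[s(x),u]_\wedge$ lies in $V$ for $u\in V$, so $\bar\rho_V$ is well-defined; and $\hat N|_V = N_V$ since the left square of the commutative diagram says $i\circ N_V = \hat N\circ i$. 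To get \eqref{eq:kp} for $\bar\rho_V$, I would expand the mock-Lie Jacobi identity \eqref{eq:i} in $\hat{\mathcal{A}}$ applied to $s(x), s(y), u$ with $u\in V$: using that $[s(x),s(y)]_\wedge = s([x,y]) + \psi(x,y)$ and that $V$ is abelian (so any bracket of two elements of $V$ vanishes), the terms involving $\psi(x,y)$ bracket against $u$ to zero, and what survives is exactly $\bar\rho_V([x,y])(u) + \bar\rho_V(x)\bar\rho_V(y)(u) + \bar\rho_V(y)\bar\rho_V(x)(u) = 0$. For \eqref{eq:1}, I would apply the Nijenhuis operator identity \eqref{eq:bja} for $\hat N$ in $\hat{\mathcal{A}}$ to the pair $(s(x), u)$ with $u\in V$. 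Expanding $[\hat N(s(x)), \hat N(u)]_\wedge$, $\hat N^2[s(x),u]_\wedge$, $\hat N[\hat N(s(x)),u]_\wedge$, $\hat N[s(x),\hat N(u)]_\wedge$ using $\hat N(s(x)) = s(N(x)) + \chi(x)$, $\hat N|_V = N_V$, and the fact that $[\chi(x),u]_\wedge = 0$ (both lie in the abelian $V$), everything collapses to $\rho(N(x))N_V(v) + N_V^2(\rho(x)v) - N_V(\rho(N(x))v) - N_V(\rho(x)N_V(v)) = 0$, which is \eqref{eq:1} with $\alpha = N_V$ and $\rho = \bar\rho_V$. Thus $(V,\bar\rho_V,N_V)$ is a representation of $(\mathcal{A},[~,~],N)$ by Definition \ref{de:a}.

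For independence of the section, suppose $s'$ is another section; then $p\circ(s-s') = 0$, so $s(x) - s'(x) = i(\phi(x))$ for a linear map $\phi:\mathcal{A}\to V$. Denote by $\bar\rho_V'$ the corresponding map. For $u\in V$, $\bar\rho_V'(x)(u) = [s'(x),u]_\wedge = [s(x) - i(\phi(x)), u]_\wedge = [s(x),u]_\wedge - [i(\phi(x)),u]_\wedge = \bar\rho_V(x)(u)$, since $i(\phi(x))$ and $u$ both lie in $V$ and $V$ is abelian. Hence $\bar\rho_V = \bar\rho_V'$, and in particular the representation does not depend on the choice of section. (One could alternatively phrase independence up to the natural isomorphism of representations, but here one gets literal equality.)

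The main obstacle I expect is purely bookkeeping: correctly tracking which bracket- and $\hat N$-terms land in the abelian ideal $V$ and therefore die, and making sure the surviving terms reassemble into precisely the relations \eqref{eq:kp} and \eqref{eq:1} rather than something off by a sign or a missing symmetrization term. In particular, for the Jacobi identity one must be careful that the two ``mixed'' cyclic terms $[s(y),[s(x),u]_\wedge]_\wedge$ and $[u,[s(x),s(y)]_\wedge]_\wedge$ are handled correctly — the latter uses that $[u, s([x,y])]_\wedge = \bar\rho_V([x,y])(u)$ together with commutativity \eqref{eq:1-1}, and that the $\psi(x,y)$-part contributes nothing. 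No genuinely hard step is involved; the work is organizing the expansion so that the abelian hypothesis is invoked cleanly at each stage.
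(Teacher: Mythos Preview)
Your proposal is correct and follows essentially the same approach as the paper: apply the Jacobi identity of $\hat{\mathcal{A}}$ to $s(x),s(y),u$ for the representation condition, apply the Nijenhuis identity of $\hat N$ to $(s(x),u)$ for the compatibility with $N_V$, and use that two sections differ by an element of the abelian ideal $V$ for independence. The only cosmetic difference is that you name the defects $\psi$ and $\chi$ upfront, whereas the paper introduces them after the proof; the computations are otherwise identical.
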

\begin{proof}
Since $s([x,y])-[s(x),s(y)]_{\wedge} \in V$, for all $x,y \in \mathcal{A},$. Therefore, let $x,y \in \mathcal{A}, u \in V$,  we have
\begin{align*}
&\bar{\rho}_V(x)\bar{\rho}_V(y)(u))+\bar{\rho}_V([x,y])(u)+\bar{\rho}_V(y)\bar{\rho}_V(x)(u) \\
&= \bar{\rho}_V(x)([s(y),u]_{\wedge})+[s([x,y]),u]_{\wedge}+\bar{\rho}_V(y)([s(x),u]_{\wedge})\\
&=[s(x),[s(y),u]_{\wedge}]_{\wedge}+[s([x,y]),u]_{\wedge}+[s(y),[s(x),u]_{\wedge}]_{\wedge}\\
&=[s(x),[s(y),u]_{\wedge}]_{\wedge}+[[s(x),s(y)]_{\wedge},u]_{\wedge}+[s(y),[s(x),u]_{\wedge}]_{\wedge}\\
&=0.
\end{align*}
 Hence, $(V,\bar{\rho}_V)$ is a representation of the mock-Lie algebra $(\mathcal{A}, [~,~]).$
Now, $s(N(x))-\hat{N}(s(x)) \in V$  for all $x\in \mathcal{A}$. Therefore, we have
\begin{align*}
\bar{\rho}_V(N(x))N_V(u)&=[s(N(x)), N_V(u)]_{\wedge}=[\hat{N}(s(x)),\hat{N}(u)]_{\wedge}\\
&=\hat{N}\bigg([\hat{N}(s(x)),u]_{\wedge}+[s(x),\hat{N}(u)]_{\wedge}-\hat{N}([s(x),u]_{\wedge})\bigg)\\
&=N_V\bigg([s(N(x)),u]_{\wedge}+[s(x),N_V(u)]_{\wedge}-N_v(\bar{\rho}_V(x)(u))\bigg)\\
&=N_V\bigg(\bar{\rho}_V(N(x))(u)+\bar{\rho}_V(x)(N_V(u))-N_v(\bar{\rho}_V(x)(u))\bigg).
\end{align*}

Hence, $\bar{\rho}_V(N(x))N_V(u)=N_V\bigg(\bar{\rho}_V(N(x))(u)+\bar{\rho}_V(x)(N_V(u))-N_v(\bar{\rho}_V(x)(u))\bigg)$ for all $x,y \in \mathcal{A}$ and $u\in V.$ Thus $(V,\bar{\rho}_V,N_V)$ is a representation of the  Nijenhuis mock-Lie algebra  $(\mathcal{A},[~,~],N).$
Let  $s_2$ be  another
section, then $s_1(x)-s_2(x)\in V,~\mbox{for all }~ x \in \mathcal{A}.$
Thus, 
$
[s_1(x)-s_2(x),u]_{\wedge}=0$.
Then, two distinct sections give the same representation.
\end{proof}
Now, define two linear maps $\psi : \mathcal{A}\otimes \mathcal{A} \rightarrow V$ and $\chi : \mathcal{A} \rightarrow V$ by  
$
\psi (x \otimes y)=[s(x),s(y)]_{\wedge}-s([x,y])
\quad \textup{and} \quad
\chi (x)=\hat{N}(s(x))-s(N(x))
$ respectively. Let $s_1$ and $s_2$ be two distinct sections. Define $\gamma : \mathcal{A} \rightarrow V$ by
$\gamma(x)=s_1(x)-s_2(x),~\mbox{for all }~ x \in \mathcal{A}$.
Since $\mu (u,v)=0$ for all $ u,v \in V,$
therefore, for all $x,y \in \mathcal{A}, u \in V,$ we have
\begin{align*}
\psi_1(x,y)
&=[s_1(x),s_1(y)]_{\wedge}-s_1([x,y])\\
&=[s_2(x)+\gamma(x),s_2(y)+\gamma(y)]_{\wedge}-(s_2([x,y])+\gamma ([x,y]))\\
&=[s_2(x),s_2(y)]_{\wedge}-s_2([x,y])+[\gamma(x),s_2(y)]_{\wedge}+[s_2(x),\gamma (y)]_{\wedge}-\gamma([x,y])\\
&=\psi_2(x,y)+\delta^1 (\gamma)(x,y).
\end{align*}
Also,
\begin{align*}
\chi_1(x)
&=\hat{N}(s_1(x))-s_1(N(x))\\
&=\hat{N}(s_2(x)+\gamma(x))-s_2(N(x))-\gamma(N(x))\\
&=\chi_2(x)+N_V(\gamma(x))-\gamma (N(x))\\
&=\chi_2(x)-\phi^1(\gamma)(x).
\end{align*}
Therefore, $(\psi_1,\chi_1)-(\psi_2,\chi_2)=(\delta^1 (\gamma),~-\phi^1(\gamma))=d^1(\gamma)$. We say that $(\psi_1,\chi_1)$ and $(\psi_2,\chi_2)$ are in the same    cohomology class of Nijenhuis mock-Lie algebra $(\mathcal{A} ,[~,~], N)$ with  coefficients in $V$. 
 Then we have the following result.
\begin{pro}
The cohomology class of $(\psi,\chi)$ does not depend on the choice of sections.
\end{pro}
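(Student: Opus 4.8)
The plan is to show that changing the section changes the pair $(\psi,\chi)$ only by a coboundary $d^1(\gamma)$, which is exactly what the computation immediately preceding the proposition establishes; so the proof is essentially a matter of packaging that computation. First I would fix two sections $s_1, s_2$ of the abelian extension and set $\gamma = s_1 - s_2 : \mathcal{A}\to V$, noting that $\gamma$ indeed lands in $V$ because $p\circ s_1 = p\circ s_2 = \id_{\mathcal{A}}$. Then I would record the two cochains $(\psi_i,\chi_i)$ built from $s_i$ via $\psi_i(x\o y)=[s_i(x),s_i(y)]_\wedge - s_i([x,y])$ and $\chi_i(x)=\hat N(s_i(x)) - s_i(N(x))$.

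Next I would carry out the substitution $s_1 = s_2 + \gamma$ in both formulas. For $\psi_1$, expanding $[s_2(x)+\gamma(x), s_2(y)+\gamma(y)]_\wedge$ and using that the bracket restricted to $V$ is zero (i.e. $\mu(u,v)=0$ for $u,v\in V$, in particular $[\gamma(x),\gamma(y)]_\wedge = 0$), the cross terms $[\gamma(x),s_2(y)]_\wedge + [s_2(x),\gamma(y)]_\wedge$ together with $-\gamma([x,y])$ assemble into $\delta^1(\gamma)(x,y)$, where $\delta^1$ is the mock-Lie algebra differential with coefficients in the representation $(V,\bar\rho_V)$; hence $\psi_1 = \psi_2 + \delta^1(\gamma)$. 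For $\chi_1$, expanding $\hat N(s_2(x)+\gamma(x)) - s_2(N(x)) - \gamma(N(x))$ and using that $\hat N|_V = N_V$ gives $\chi_1 = \chi_2 + N_V(\gamma(x)) - \gamma(N(x)) = \chi_2 - \phi^1(\gamma)$, where $\phi^1(\gamma)(x) = \gamma(N(x)) - N_V(\gamma(x))$. Combining, $(\psi_1,\chi_1) - (\psi_2,\chi_2) = (\delta^1(\gamma), -\phi^1(\gamma)) = d^1(\gamma)$, so $(\psi_1,\chi_1)$ and $(\psi_2,\chi_2)$ represent the same class in the second cohomology of $(\mathcal{A},[~,~],N)$ with coefficients in $V$. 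Since this holds for any pair of sections, the class of $(\psi,\chi)$ is independent of the choice of section.

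There is no serious obstacle here: every nontrivial identity needed has already been verified in the paragraphs preceding the proposition, so the proof amounts to invoking those computations and observing that $d^1 = (\delta^1, -\phi^1)$ is by definition the Nijenhuis mock-Lie $1$-coboundary operator. The one point that warrants a sentence of care is that the representation $(V,\bar\rho_V,N_V)$ used to define $\delta^1$ and $\phi^1$ does not itself depend on the section — but this is exactly the content of the theorem just proved, so it may simply be cited. Thus I would keep the proof to a few lines, referring back to the displayed computations and concluding that $(\psi_1,\chi_1)$ and $(\psi_2,\chi_2)$ differ by $d^1(\gamma)$.

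\begin{proof}
Let $s_1, s_2$ be two sections of the abelian extension, and put $\gamma = s_1 - s_2$. Since $p\circ s_1 = p\circ s_2 = \id_{\mathcal{A}}$, we have $\gamma(x)\in V$ for all $x\in\mathcal{A}$. By the computations preceding the proposition, using $\mu(u,v)=0$ for $u,v\in V$ and $\hat N|_V = N_V$, one gets $\psi_1 = \psi_2 + \delta^1(\gamma)$ and $\chi_1 = \chi_2 - \phi^1(\gamma)$, so that
\[
(\psi_1,\chi_1)-(\psi_2,\chi_2) = (\delta^1(\gamma),\, -\phi^1(\gamma)) = d^1(\gamma).
\]
Hence $(\psi_1,\chi_1)$ and $(\psi_2,\chi_2)$ lie in the same cohomology class of $(\mathcal{A},[~,~],N)$ with coefficients in $V$; note moreover that the representation $(V,\bar\rho_V,N_V)$ defining $\delta^1$ and $\phi^1$ is independent of the section by the preceding theorem. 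Since the two sections were arbitrary, the class of $(\psi,\chi)$ does not depend on the choice of section.
\end{proof}
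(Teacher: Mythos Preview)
Your proposal is correct and follows precisely the same approach as the paper: the paper itself performs the computations $\psi_1 = \psi_2 + \delta^1(\gamma)$ and $\chi_1 = \chi_2 - \phi^1(\gamma)$ in the paragraphs immediately preceding the proposition, concludes $(\psi_1,\chi_1)-(\psi_2,\chi_2)=d^1(\gamma)$, and then states the proposition without a separate proof. Your write-up simply packages those same computations into a short formal proof, adding the (apt) remark that the representation is section-independent by the preceding theorem.
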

\begin{thm}
Let $V$ be a vector space and $N_V:V \rightarrow V$ be a linear map. Then $(V,\mu,N_V)$ is a Nijenhuis mock-Lie algebra with the bracket $\mu (u,v)=0 $ for all $u,v \in V.$ Then two isomorphic abelian extensions of a Nijenhuis mock-Lie algebra $(\mathcal{A} ,[~,~], N)$ by $(V,\mu,N_V)$ give rise to the same cohomology class.
\end{thm}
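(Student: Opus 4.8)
The plan is to take two isomorphic abelian extensions, exploit the isomorphism $\xi$ to produce a section of one extension from a section of the other, and then show that the resulting $2$-cochains $(\psi,\chi)$ agree up to a coboundary $d^1(\gamma)$, which by the preceding proposition places them in the same cohomology class. First I would fix abelian extensions $(\hat{\mathcal{A}},[~,~]_{\wedge_1},\hat{N}_1)$ and $(\hat{\mathcal{A}},[~,~]_{\wedge_2},\hat{N}_2)$ of $(\mathcal{A},[~,~],N)$ by $(V,\mu,N_V)$, together with an isomorphism of Nijenhuis mock-Lie algebras $\xi$ making the two-row diagram in the definition of isomorphic extensions commute; in particular $\xi\circ i=i$ and $p\circ \xi=p$. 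Choose a section $s_1:\mathcal{A}\to\hat{\mathcal{A}}$ of the first extension (so $p\circ s_1=\id_{\mathcal{A}}$), and set $s_2:=\xi\circ s_1$. Commutativity of the diagram gives $p\circ s_2=p\circ\xi\circ s_1=p\circ s_1=\id_{\mathcal{A}}$, so $s_2$ is a section of the second extension.

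Next I would compute the $2$-cochains attached to these sections. Write $\psi_1(x,y)=[s_1(x),s_1(y)]_{\wedge_1}-s_1([x,y])$, $\chi_1(x)=\hat{N}_1(s_1(x))-s_1(N(x))$, and analogously $\psi_2,\chi_2$ with $s_2$ and the second structure. Using that $\xi$ is an algebra homomorphism, $\xi([a,b]_{\wedge_1})=[\xi(a),\xi(b)]_{\wedge_2}$, and that $\xi$ restricted to $V$ is the identity (because $\xi\circ i=i$ and $\psi_1(x,y)\in V$, $s_2([x,y])=\xi(s_1([x,y]))$ up to the $V$-part), one gets
\begin{align*}
\psi_2(x,y)&=[s_2(x),s_2(y)]_{\wedge_2}-s_2([x,y])\\
&=[\xi(s_1(x)),\xi(s_1(y))]_{\wedge_2}-\xi(s_1([x,y]))\\
&=\xi\big([s_1(x),s_1(y)]_{\wedge_1}-s_1([x,y])\big)=\xi(\psi_1(x,y))=\psi_1(x,y).
\end{align*}
Similarly, using $\xi\circ\hat{N}_1=\hat{N}_2\circ\xi$,
\begin{align*}
\chi_2(x)&=\hat{N}_2(s_2(x))-s_2(N(x))=\hat{N}_2(\xi(s_1(x)))-\xi(s_1(N(x)))\\
&=\xi\big(\hat{N}_1(s_1(x))-s_1(N(x))\big)=\xi(\chi_1(x))=\chi_1(x).
\end{align*}
Hence $(\psi_1,\chi_1)=(\psi_2,\chi_2)$, and in particular they differ by the zero coboundary, so they lie in the same cohomology class.

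To conclude cleanly I would invoke the preceding proposition (the cohomology class of $(\psi,\chi)$ is independent of the choice of section within a fixed extension): starting from an \emph{arbitrary} section $s_1'$ of the first extension and an \emph{arbitrary} section $s_2'$ of the second, compare $s_1'$ with $s_1$ and $s_2'$ with $s_2=\xi\circ s_1$; each comparison shifts the cochain by a $d^1$-coboundary, while the middle comparison of $s_1$ and $s_2=\xi\circ s_1$ is the equality just established. Therefore the cohomology class determined by the second extension equals that determined by the first. The main obstacle is the bookkeeping around the identification of $\xi|_V$ with $\id_V$ and the verification that $\xi$ genuinely sends the $V$-valued cochain $\psi_1(x,y)$ to itself; everything else is a routine diagram chase using that $\xi$ is simultaneously a mock-Lie algebra morphism and intertwines $\hat{N}_1$ with $\hat{N}_2$.
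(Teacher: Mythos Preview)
Your proposal is correct and follows essentially the same approach as the paper: choose a section $s_1$ of the first extension, transport it via $\xi$ to a section $s_2=\xi\circ s_1$ of the second, and use that $\xi$ is a Nijenhuis mock-Lie algebra homomorphism with $\xi|_V=\id_V$ to conclude $(\psi_2,\chi_2)=(\psi_1,\chi_1)$. Your final paragraph invoking the preceding proposition to reduce from arbitrary sections to the particular choice $s_2=\xi\circ s_1$ is a welcome bit of extra care that the paper leaves implicit.
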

\begin{proof}
Let $(\hat{\mathcal{A}} ,[~,~]_{\wedge_1},\hat{N_1})$ and $(\hat{\mathcal{A}} ,[~,~]_{\wedge_2},\hat{N_2})$ be two isomorphic abelian extensions of $(\mathcal{A},[~,~])$ by $(V_N,\mu)$. Let $s_1$ be a section of $(\hat{\mathcal{A}}_{\hat{N_1}},[~,~]_{\wedge_1})$. Thus, we have 
$p_2 \circ (\xi \circ s_1)=p_1 \circ s_1 =id_{\mathcal{A}}$ as $p_2 \circ \xi =p_1$, where $\xi$ is the map between the two abelian extensions. Hence $\xi \circ s_1$ is a section of $(\hat{\mathcal{A}}_{\hat{N_2}},[~,~]_{\wedge_2})$.
Now define $s_2 :=\xi \circ s_1$. Since $\xi$ is a homomorphism of Nijenhuis mock-Lie algebras such that 
$\xi|_{V}=id_V, ~\xi ([s_1(x),u]_{\wedge_1})=[s_2(x),u]_{\wedge_2}$ . Thus, $\xi|_{V}: V \rightarrow V$ is compatible with the induced representations.

Now, for all $x,y \in \mathcal{A}$, we have 
\begin{align*}
\psi_2(x \otimes y)
&=[s_2(x),s_2(y)]_{\wedge_2}-s_2([x,y])\\
&=[\xi(s_1(x)),\xi(s_1(y)])_{\wedge_2}-\xi(s_1([x,y]))\\
&=\xi ([s_1(x),s_1(y)]_{\wedge_1}-s_1([x,y]))\\
&=\xi (\psi_1 (x \otimes y))=\psi_1(x\otimes y),
\end{align*}
and
\begin{align*}
\chi_2(x)
&= \hat{N_2}(s_2(x))-s_2(N(x))\\
&=\hat{N_2}(\xi(s_1(x)))-\xi(s_1(N(x)))\\
&=\xi (\hat{N_1 }(s_1(x))-s_1(N(x)))\\
&=\xi(\chi_1(x))=\chi_1(x).
\end{align*}
Therefore, two isomorphic abelian extensions give rise to the same cohomology class.
\end{proof}


 \section{\N mock-Lie bialgebras}\label{Sec4}
In this section, we introduce and study matched pairs and Manin triples of Nijenhuis mock-Lie algebras, establishing their connection with Nijenhuis mock-Lie bialgebras. Additionally, we explore the class of coboundary Nijenhuis mock-Lie bialgebras through the framework of 
$s$-adjoint-admissible Nijenhuis mock-Lie algebras, the $S$-admissible mock-Lie Yang-Baxter equation and
$\mathcal O$-operators.

\subsection{Matched pairs of Nijenhuis mock-Lie algebras}
 \begin{pro}(\cite[Theorem 3.1]{BCHM})\label{pro:aa} Let $(\mathcal{A},[~,~]_\mathcal{A})$ and $(\mathcal{A'},[~,~]_\mathcal{A'})$ be two mock-Lie algebras, $\rho_\mathcal{A}:\mathcal{A}\longrightarrow End(\mathcal{A'})$, $\rho_\mathcal{A'}:\mathcal{A'}\longrightarrow End(\mathcal{A})$ be two linear maps. Define the bracket $[~,~]_\diamond$ on the direct sum $\mathcal{A} \oplus \mathcal{A'}$ by
 \begin{eqnarray}\label{eq:sff}
 [(a+x),(b+y)]_\diamond:=[a,b]_\mathcal{A} + {\rho_\mathcal{A'}}(y)a +{\rho_\mathcal{A'}}(x)b + [x,y]_\mathcal{A'} +{\rho_\mathcal{A}}(a)y +{\rho_\mathcal{A}}(b)x,
 \end{eqnarray}
 where $a,b\in \mathcal{A} $ and $x,y\in \mathcal{A'}$. Then $(\mathcal{A}\oplus \mathcal{A'} , [~,~]_\diamond)$ is a mock-Lie algebra if and only if $(\mathcal{A}, \mathcal{A'},\rho_\mathcal{A},\rho_\mathcal{A'} )$ is a matched pair of mock-Lie algebras, that is,  $(\mathcal{A'},\rho_\mathcal{A'})$ is a representation of $(\mathcal{A},[~,~]_\mathcal{A})$, $(\mathcal{A},\rho_\mathcal{A})$ is a representation of $(\mathcal{A'},[~,~]_\mathcal{A'})$ and the following two equations hold:
 \begin{eqnarray*}
 &{\rho_\mathcal{A}}(a)[x,y]_\mathcal{A'} + [{\rho_\mathcal{A}}(a)x,y]_\mathcal{A'} + [x,{\rho_\mathcal{A}}(a)y]_\mathcal{A'} +{\rho_\mathcal{A}}({\rho_\mathcal{A}}(y)a)x +{\rho_\mathcal{A}}({\rho_\mathcal{A}}(x)a)y=0,&\label{eq:spf}\\
 &{\rho_\mathcal{A'}}(x)[a,b]_\mathcal{A} + [{\rho_\mathcal{A'}}(x)a,b]_\mathcal{A} + [a,{\rho_\mathcal{A'}}(x)b]_\mathcal{A} +{\rho_\mathcal{A'}}({\rho_\mathcal{A'}}(b)x)a +{\rho_\mathcal{A'}}({\rho_\mathcal{A'}}(a)x)b=0.&\label{eq:skf}
 \end{eqnarray*}
 \end{pro}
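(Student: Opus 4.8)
\textbf{Proof strategy for Proposition \ref{pro:aa}.} This is the matched pair criterion for mock-Lie algebras, and the proof is a direct verification: one simply writes out the mock-Lie axioms for the bracket $[~,~]_\diamond$ defined in Eq.~\eqref{eq:sff} and separates the resulting identity into its ``pure'' and ``mixed'' components. First I would observe that commutativity of $[~,~]_\diamond$ is immediate from the commutativity of $[~,~]_\mathcal{A}$ and $[~,~]_\mathcal{A'}$ together with the symmetric way the actions $\rho_\mathcal{A}, \rho_\mathcal{A'}$ enter Eq.~\eqref{eq:sff}; so the whole content sits in the Jacobi-type identity \eqref{eq:i}. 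The plan is therefore to compute
\[
J(u,v,w):=[u,[v,w]_\diamond]_\diamond+[v,[w,u]_\diamond]_\diamond+[w,[u,v]_\diamond]_\diamond
\]
for $u=a+x$, $v=b+y$, $w=c+z$ with $a,b,c\in\mathcal{A}$ and $x,y,z\in\mathcal{A'}$, expand everything via Eq.~\eqref{eq:sff}, and then collect terms according to how many arguments come from $\mathcal{A}$ versus $\mathcal{A'}$.

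The key step is the bookkeeping: after expansion, $J(u,v,w)$ naturally decomposes into components lying in $\mathcal{A}$ and in $\mathcal{A'}$, and within each component the terms are graded by how many of $a,b,c$ (resp. $x,y,z$) appear. The $(3,0)$-part (three elements of $\mathcal{A}$, landing in $\mathcal{A}$) vanishes precisely because $(\mathcal{A},[~,~]_\mathcal{A})$ is a mock-Lie algebra, and symmetrically the $(0,3)$-part vanishes because $(\mathcal{A'},[~,~]_\mathcal{A'})$ is one. The $(2,1)$-parts — two elements from one factor and one from the other — split further: the pieces landing in $\mathcal{A'}$ assemble, using Eq.~\eqref{eq:kp} for $\rho_\mathcal{A'}$, into exactly the representation condition \eqref{eq:kp} for $(\mathcal{A},\rho_\mathcal{A})$ over $(\mathcal{A'},[~,~]_\mathcal{A'})$ plus the first displayed compatibility equation; and the $(1,2)$-parts landing in $\mathcal{A}$ assemble symmetrically into the representation condition for $(\mathcal{A'},\rho_\mathcal{A'})$ over $(\mathcal{A},[~,~]_\mathcal{A})$ plus the second displayed compatibility equation. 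Running this argument backwards gives the converse. Thus $J\equiv 0$ for all arguments if and only if all five conditions hold, which is the claim.

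The main obstacle is purely organizational rather than conceptual: making sure that, in the $(2,1)$ and $(1,2)$ sectors, the terms cyclically generated by $J$ are matched correctly against the symmetric sums appearing in Eq.~\eqref{eq:kp} and in the two compatibility relations, and that the cyclic symmetry of $J$ is used to combine the three cyclic rotations into a single instance of each condition (so that, e.g., the seemingly asymmetric compatibility equations genuinely capture all the surviving terms). Since the statement is quoted from \cite[Theorem 3.1]{BCHM}, I would in fact just cite that reference for the full computation; but the sketch above is the route one would follow to reprove it from scratch. I expect no genuine difficulty beyond careful index-tracking, exactly as in the analogous matched-pair theorems for Lie algebras.
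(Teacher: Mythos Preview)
Your proposal is correct and matches the paper's own treatment: the paper does not prove this proposition at all but simply cites \cite[Theorem 3.1]{BCHM}, exactly as you suggest doing, and the direct-verification sketch you give (expand the Jacobi sum $J(a+x,b+y,c+z)$, separate into $\mathcal{A}$- and $\mathcal{A'}$-components, and grade by the number of arguments from each factor) is the standard route that \cite{BCHM} follows.
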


 Next we give the notion of a matched pair of Nijenhuis mock-Lie algebras which extends the one of mock-Lie algebras.
 \begin{defi}\label{de:mv} A {\bf matched pair of Nijenhuis mock-Lie algebras} $(\mathcal{A},[~,~]_\mathcal{A},N_\mathcal{A})$ and  $(\mathcal{A'},[~,~]_\mathcal{A'},N_\mathcal{A'})$ is a six-tuple $((\mathcal{A}, N_\mathcal{A}), (\mathcal{A'}, N_\mathcal{A'}),\rho_\mathcal{A}, \rho_\mathcal{A'})$ where $(\mathcal{A},\rho_\mathcal{A} ,N_\mathcal{A})$ is a representation of $(\mathcal{A'},[~,~]_\mathcal{A'},N_\mathcal{A'})$ and $(\mathcal{A'},\rho_\mathcal{A'} ,N_\mathcal{A'})$ is a representation of $(\mathcal{A},[~,~]_\mathcal{A},N_\mathcal{A})$, and $( \mathcal{A},\mathcal{A'},\rho_\mathcal{A}, \rho_\mathcal{A'} )$ is a matched pair of mock-Lie algebras.
 \end{defi}

 \begin{thm}\label{thm:gp} Let $(\mathcal{A},[~,~]_\mathcal{A},N_\mathcal{A})$ and  $(\mathcal{A'},[~,~]_\mathcal{A'},N_\mathcal{A'})$ be two Nijenhuis mock-Lie algebras. Then $((\mathcal{A}, N_\mathcal{A}), (\mathcal{A'}, N_\mathcal{A'}),\rho_\mathcal{A}, \rho_\mathcal{A'})$ is a matched pair of $(\mathcal{A},[~,~]_\mathcal{A},N_\mathcal{A})$ and $(\mathcal{A'},[~,~]_\mathcal{A'},N_\mathcal{A'})$ if and only if $(\mathcal{A}\oplus \mathcal{A'}, [~,~]_\diamond,N_\mathcal{A}+N_\mathcal{A'})$ is a Nijenhuis mock-Lie algebra by defining the multiplication on $\mathcal{A} \oplus \mathcal{A}$ by Eq.\eqref{eq:sff} and linear map $N_\mathcal{A}+N_\mathcal{A'} : \mathcal{A} \oplus\mathcal{A'} \longrightarrow \mathcal{A}\oplus\mathcal{A'}$ by
 \begin{eqnarray}\label{eq:cgy}
 (N_\mathcal{A}+N_\mathcal{A'})(a+x):=N_\mathcal{A}(a)+N_\mathcal{A'}(x),
 \end{eqnarray}
 for all $a,b\in\mathcal{A}$ and $x,y\in\mathcal{A'}$.
 \end{thm}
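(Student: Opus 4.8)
The plan is to reduce the statement to a combination of two facts: the matched-pair criterion for ordinary mock-Lie algebras (Proposition~\ref{pro:aa}) and the Nijenhuis-operator criterion for a representation being compatible with a Nijenhuis operator (Definition~\ref{de:a} together with Theorem~\ref{thm:gp}'s predecessors, notably Proposition~\ref{pro:B}). Concretely, observe that $(\mathcal{A}\oplus\mathcal{A'},[~,~]_\diamond,N_\mathcal{A}+N_\mathcal{A'})$ is a Nijenhuis mock-Lie algebra if and only if (i) $(\mathcal{A}\oplus\mathcal{A'},[~,~]_\diamond)$ is a mock-Lie algebra and (ii) $N_\mathcal{A}+N_\mathcal{A'}$ satisfies the Nijenhuis identity \eqref{eq:bja} for $[~,~]_\diamond$. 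By Proposition~\ref{pro:aa}, (i) holds precisely when $(\mathcal{A},\mathcal{A'},\rho_\mathcal{A},\rho_\mathcal{A'})$ is a matched pair of mock-Lie algebras, which is one of the three clauses in Definition~\ref{de:mv}. So the whole content of the theorem is the equivalence of (ii) with the remaining two clauses of Definition~\ref{de:mv}, namely that $(\mathcal{A},\rho_\mathcal{A},N_\mathcal{A})$ is a representation of $(\mathcal{A'},[~,~]_\mathcal{A'},N_\mathcal{A'})$ and $(\mathcal{A'},\rho_\mathcal{A'},N_\mathcal{A'})$ is a representation of $(\mathcal{A},[~,~]_\mathcal{A},N_\mathcal{A})$.

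The key step is therefore a direct expansion of the Nijenhuis identity
\[
[(N_\mathcal{A}+N_\mathcal{A'})(a+x),(N_\mathcal{A}+N_\mathcal{A'})(b+y)]_\diamond+(N_\mathcal{A}+N_\mathcal{A'})^2[a+x,b+y]_\diamond
\]
\[
=(N_\mathcal{A}+N_\mathcal{A'})\big([(N_\mathcal{A}+N_\mathcal{A'})(a+x),b+y]_\diamond+[a+x,(N_\mathcal{A}+N_\mathcal{A'})(b+y)]_\diamond\big)
\]
using \eqref{eq:sff} and \eqref{eq:cgy}. The left- and right-hand sides each decompose into an $\mathcal{A}$-component and an $\mathcal{A'}$-component. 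Collecting the purely $\mathcal{A}$-valued terms that involve only $a,b$ (i.e.\ $[N_\mathcal{A}a,N_\mathcal{A}b]_\mathcal{A}$ etc.) yields exactly the Nijenhuis identity \eqref{eq:bja} for $N_\mathcal{A}$ on $(\mathcal{A},[~,~]_\mathcal{A})$, which already holds by hypothesis; similarly for $N_\mathcal{A'}$ on $\mathcal{A'}$. The genuinely new relations come from the mixed terms: the $\mathcal{A}$-valued terms involving $\rho_\mathcal{A'}$ applied to $a$ and to $y$ (or $b$ and $x$) collapse to Eq.~\eqref{eq:1} (equivalently \eqref{eq:fxm}/\eqref{eq:ljm} in the $S$-adjoint form) for the triple $(\mathcal{A},\rho_\mathcal{A},N_\mathcal{A})$ relative to $(\mathcal{A'},[~,~]_\mathcal{A'},N_\mathcal{A'})$, and symmetrically the $\mathcal{A'}$-valued mixed terms give Eq.~\eqref{eq:1} for $(\mathcal{A'},\rho_\mathcal{A'},N_\mathcal{A'})$ relative to $(\mathcal{A},[~,~]_\mathcal{A},N_\mathcal{A})$. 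Since the decomposition into $\mathcal{A}$- and $\mathcal{A'}$-components, and within each into terms of a fixed ``type'' in the arguments, is a direct sum, the vanishing of the total expression is equivalent to the vanishing of each piece separately, giving the stated equivalence.

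I would organize the write-up by first invoking Proposition~\ref{pro:aa} to dispatch clause (i), then writing $X:=a+x$, $Y:=b+y$ and systematically expanding both sides of \eqref{eq:bja} for $[~,~]_\diamond$, sorting the resulting terms by their target component ($\mathcal{A}$ or $\mathcal{A'}$) and by which of $\rho_\mathcal{A},\rho_\mathcal{A'},[~,~]_\mathcal{A},[~,~]_\mathcal{A'}$ they are built from. The main obstacle is purely bookkeeping: the expansion of $[~,~]_\diamond$ already has six summands, and nesting it inside the Nijenhuis identity produces a large number of terms, so the real work is to group them correctly and recognize each group as one of Eqs.~\eqref{eq:bja}, \eqref{eq:1}, or (for the cross terms that mix $\rho_\mathcal{A}$ and $[~,~]_\mathcal{A'}$) as consequences of the already-assumed representation axioms \eqref{eq:kp} and of the matched-pair compatibility. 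No conceptual difficulty is expected beyond this careful sorting; the computation is, in the authors' words, straightforward, and I would present it as such, displaying only the grouping that produces Eq.~\eqref{eq:1} for each of the two triples and remarking that the remaining groups vanish by the hypotheses already in force.
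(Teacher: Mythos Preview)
Your approach is correct and matches the paper's: invoke Proposition~\ref{pro:aa} for the mock-Lie part, then expand the Nijenhuis identity for $N_\mathcal{A}+N_\mathcal{A'}$ and sort the resulting terms into the pure-bracket pieces (which vanish by hypothesis via Eq.~\eqref{eq:bja}) and four instances of Eq.~\eqref{eq:1}. One simplification: no ``cross terms mixing $\rho_\mathcal{A}$ and $[~,~]_\mathcal{A'}$'' actually arise, since each summand of $[~,~]_\diamond$ in Eq.~\eqref{eq:sff} involves exactly one of $[~,~]_\mathcal{A}$, $[~,~]_\mathcal{A'}$, $\rho_\mathcal{A}$, $\rho_\mathcal{A'}$, so the bookkeeping is cleaner than you anticipate and neither Eq.~\eqref{eq:kp} nor the matched-pair compatibility conditions enter the Nijenhuis computation.
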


 \begin{proof} It can be proved by Proposition \mref{pro:aa} and the following equality 
 \begin{eqnarray*}
 &&\hspace{-15mm}[(N_\mathcal{A}+N_\mathcal{A'})(a+x),(N_\mathcal{A}+N_\mathcal{A'})(b+y)]_\diamond+(N_\mathcal{A}+N_\mathcal{A'})^2([(a+x),(b+y)]_\diamond)\\
 &&\hspace{-6mm}-(N_\mathcal{A}+N_\mathcal{A'})([(N_\mathcal{A}+N_\mathcal{A'})(a+x),(b+y)]_\diamond)-(N_\mathcal{A}
 +N_\mathcal{A'})([(a+x),(N_\mathcal{A}+N_\mathcal{A'})(b+y)]_\diamond)\\
 &\stackrel {(\ref{eq:cgy})(\ref{eq:sff})(\ref{eq:bja})}{=}&\hspace{-3mm}
 {\rho_\mathcal{A'}}(N_\mathcal{A'}(y))N_\mathcal{A}(a)
 +{\rho_\mathcal{A'}}(N_\mathcal{A'}(x))N_\mathcal{A}(b)+{\rho_\mathcal{A}}(N_\mathcal{A}(a))N_\mathcal{A'}(y)+{\rho_\mathcal{A}}(N_\mathcal{A}(b))N_\mathcal{A'}(x)\\
 &&\hspace{-5mm}+{N_\mathcal{A}}^2({\rho_\mathcal{A'}}(y)a)+{N_\mathcal{A}}^2({\rho_\mathcal{A'}}(x)b)
 +{N_\mathcal{A'}}^2({\rho_\mathcal{A}}(a)y)+{N_\mathcal{A'}}^2({\rho_\mathcal{A}}(b)x)
 -N_\mathcal{A}({\rho_\mathcal{A'}}(y){N_\mathcal{A}}(a))\\
 &&\hspace{-5mm} -N_\mathcal{A}({\rho_\mathcal{A'}}(N_\mathcal{A'}(x))b)-N_\mathcal{A'}({\rho_\mathcal{A}}(N_\mathcal{A}(a))y)
 -N_\mathcal{A'}({\rho_\mathcal{A}}(b)(N_\mathcal{A'}(x)))-N_\mathcal{A}({\rho_\mathcal{A'}}({N_\mathcal{A'} (y)})a) \\
 &&\hspace{-5mm} -N_\mathcal{A}({\rho_\mathcal{A'}}(x) N_\mathcal{A}(b))-N_\mathcal{A'}({\rho_\mathcal{A}}(a)N_\mathcal{A'}(y))-N_\mathcal{A'}({\rho_\mathcal{A'}}(N_\mathcal{A}(b))x),
 \end{eqnarray*}
 where $a,b\in \mathcal{A}, x,y\in \mathcal{A'}$.
 \end{proof}

\subsection{Manin triple of a Nijenhuis mock-Lie algebras} Recall that a bilinear form $\mathfrak{B}$ on a mock-Lie algebra $(\mathcal{A},[~,~])$ is called {\bf invariant} if, for all $x,y,z \in \mathcal{A}$,
 \begin{eqnarray}\label{eq:hi}
 \mathfrak{B}([x,y],z)=\mathfrak{B}(x,[y,z]).
 \end{eqnarray}

 \begin{pro}\label{pro:kl} Let $(\mathcal{A},[~,~],N)$ be a Nijenhuis mock-Lie algebra and $\mathfrak{B}$ be a nondegenerate invariant bilinear form on $(\mathcal{A},[~,~])$. Assume that $\widehat{N}$ is the adjoint linear map of $N$ with respect to $\mathfrak{B}$, characterized by
 \begin{eqnarray}\label{eq:fg}
 \mathfrak{B}(N(x),y)=\mathfrak{B}(x,\widehat{N}(y)), ~ \forall~x, y \in \mathcal{A}.
 \end{eqnarray}
 Then $\widehat{N}$ is adjoint-admissible to $(\mathcal{A},[~,~],N)$, or equivalently, $(\mathcal{A}^*,ad^*,{\widehat{N}}^*)$ is a representation of $(\mathcal{A},[~,~],N)$. Moreover, $(\mathcal{A}^*,ad^*,{\widehat{N}}^*)$ is equivalent to $(\mathcal{A},ad,N)$ as representations of $(\mathcal{A},[~,~],N)$. Conversely, let $(\mathcal{A},[~,~],N)$ be a Nijenhuis mock-Lie algebra and $S: \mathcal{A}\longrightarrow \mathcal{A}$ be a linear map that is adjoint-admissible to $(\mathcal{A},[~,~],N)$. If the resulting representation $(\mathcal{A}^*,ad^*,S^*)$ of $(\mathcal{A},[~,~],N)$ is equivalent to $(\mathcal{A},ad,N)$, then there exists a nondegenerate invariant bilinear form $\mathfrak{B}$ on $(\mathcal{A},[~,~],N)$ such that $S=\widehat{N}$.
 \end{pro}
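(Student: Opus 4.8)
The plan is to deduce everything from transporting the adjoint representation along the isomorphism $\mathcal{A}\cong\mathcal{A}^*$ supplied by $\mathfrak{B}$, so that Lemma \ref{lem:HO} and Example \ref{ex:pl} do the real work. For the direct statement I would introduce the linear map $\Phi:\mathcal{A}\to\mathcal{A}^*$ given by $\langle\Phi(x),y\rangle=\mathfrak{B}(x,y)$, which is bijective exactly because $\mathfrak{B}$ is nondegenerate. Then I would record two intertwining identities: first $\Phi\circ ad(x)=ad^*(x)\circ\Phi$ for all $x\in\mathcal{A}$, which drops out of the commutativity of $[~,~]$ together with invariance \eqref{eq:hi} of $\mathfrak{B}$ and the definition \eqref{eq:sr} of the dual action (namely $\mathfrak B([x,a],y)=\mathfrak B([a,x],y)=\mathfrak B(a,[x,y])=\langle ad^*(x)\Phi(a),y\rangle$); and second $\Phi\circ N=\widehat{N}^*\circ\Phi$, which is immediate from the defining relation \eqref{eq:fg} of $\widehat N$ since $\langle\Phi(N(x)),y\rangle=\mathfrak B(N(x),y)=\mathfrak B(x,\widehat N(y))=\langle\widehat N^*\Phi(x),y\rangle$.

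With these two identities in place, the point is that $(\mathcal{A},ad,N)$ is a representation of $(\mathcal{A},[~,~],N)$ by Example \ref{ex:pl}, and $\Phi$ is a bijective linear map carrying its module action to $ad^*$ and the operator $N$ to $\widehat N^*$. Since the representation axioms \eqref{eq:kp} and \eqref{eq:1} are stable under conjugation by such a bijection, the transported triple $\bigl(\mathcal{A}^*,\Phi\,ad(\cdot)\,\Phi^{-1},\Phi N\Phi^{-1}\bigr)=(\mathcal{A}^*,ad^*,\widehat N^*)$ is again a representation of $(\mathcal{A},[~,~],N)$, and $\Phi$ is by construction an equivalence of it with $(\mathcal{A},ad,N)$ in the sense of Definition \ref{de:a}. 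By Lemma \ref{lem:HO}, the assertion that $(\mathcal{A}^*,ad^*,\widehat N^*)$ is a representation of $(\mathcal{A},[~,~],N)$ is literally the assertion that $\widehat N$ is adjoint-admissible to $(\mathcal{A},[~,~],N)$; so the first three sentences of the statement are all settled at once.

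For the converse I would run the construction backwards. Starting from a linear map $S$ that is adjoint-admissible — equivalently, by Lemma \ref{lem:HO}, from the representation $(\mathcal{A}^*,ad^*,S^*)$ — together with an equivalence of representations which I may take in the form $\phi:(\mathcal{A},ad,N)\to(\mathcal{A}^*,ad^*,S^*)$, so $\phi$ is bijective with $\phi(ad(x)v)=ad^*(x)\phi(v)$ and $\phi(N(v))=S^*(\phi(v))$, I would define $\mathfrak{B}(x,y):=\langle\phi(x),y\rangle$. Nondegeneracy of $\mathfrak{B}$ is the bijectivity of $\phi$; invariance \eqref{eq:hi} comes from $\phi([x,y])=\phi(ad(y)x)=ad^*(y)\phi(x)$ combined with \eqref{eq:sr}, giving $\mathfrak B([x,y],z)=\langle ad^*(y)\phi(x),z\rangle=\langle ad(y)z,\phi(x)\rangle=\mathfrak B(x,[y,z])$; and the relation $\phi(N(x))=S^*(\phi(x))$ unwinds at once to $\mathfrak B(N(x),y)=\langle S^*\phi(x),y\rangle=\langle\phi(x),S(y)\rangle=\mathfrak B(x,S(y))$ for all $x,y$, which says precisely that the $\mathfrak{B}$-adjoint of $N$ is $S$, i.e.\ $S=\widehat N$.

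All the computations here are short and essentially forced once the maps $\Phi$, $\phi$ are named, so I do not anticipate a genuine obstacle. The single point I would flag and state explicitly rather than grind out is the transport-of-structure step: that a bijective linear intertwiner automatically upgrades $(\mathcal{A}^*,ad^*,\widehat N^*)$ from a bare triple to a representation satisfying \eqref{eq:1}. A secondary point worth pinning down is the reading of the hypothesis: ``invariant bilinear form on $(\mathcal{A},[~,~],N)$'' is to be understood as invariance with respect to the bracket as in \eqref{eq:hi}, with $\widehat N$ simply the $\mathfrak{B}$-adjoint of $N$; under this reading no symmetry of $\mathfrak{B}$ is used anywhere in the argument.
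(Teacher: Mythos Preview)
Your argument is correct and the converse half matches the paper almost verbatim. The one organizational difference is in the forward direction: the paper first verifies adjoint-admissibility of $\widehat N$ by a direct computation --- it pairs the Nijenhuis identity \eqref{eq:bja} against an arbitrary $z$ via $\mathfrak B$, then uses invariance \eqref{eq:hi} and the defining relation \eqref{eq:fg} to move all the $N$'s to the right slot, and reads off Eq.~\eqref{eq:sfh} from nondegeneracy --- and only afterwards introduces the map $\psi=\Phi$ to establish the equivalence. You instead set up $\Phi$ first, prove the two intertwining identities, and deduce both admissibility and equivalence in one stroke by transport of structure from Example~\ref{ex:pl}. Your route is slightly more conceptual and avoids one explicit calculation; the paper's route has the minor advantage of displaying concretely how the Nijenhuis identity for $N$ becomes Eq.~\eqref{eq:sfh} for $\widehat N$ under $\mathfrak B$. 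Both rely on exactly the same ingredients (Lemma~\ref{lem:HO}, invariance, nondegeneracy), so the difference is purely expository.
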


 \begin{proof} For all $x,y,z \in \mathcal{A}$, we have
 \begin{eqnarray*}
 0&\stackrel {(\ref{eq:bja})}{=}&\mathfrak{B}([N(x),N(y)]+{N^2}[x,y]-N[N(x),y]-N[x,N(y)],z)\\
 &\stackrel {(\ref{eq:hi})(\ref{eq:fg})}{=}&
 \mathfrak{B}(x,\widehat{N}([N(y),z])+[y,{\widehat{N}}^2 (z)]-\widehat{N}[y,\widehat{N}(z)]-[N(y),\widehat{N}(z)]). 
 \end{eqnarray*}
 Then 
 \begin{eqnarray*}
 \widehat{N}([N(y),z]))+[y,{\widehat{N}}^2 (z)]-\widehat{N}[y,\widehat{N}(z)]-[N(y),\widehat{N}(z)]=0.
 \end{eqnarray*}
 By Lemma \mref{lem:HO}, $(\mathcal{A}^*,ad^*,{\widehat{N}}^*)$ is a representation of  $(\mathcal{A},[~,~],N)$. Next, we define the linear map $\psi:\mathcal{A} \longrightarrow \mathcal{A}^*$ by
 \begin{eqnarray}\label{eq:ik}
 \psi(x)y:=\langle\psi(x),y\rangle:=\mathfrak{B}(x,y),
 \end{eqnarray}
 for all $x,y \in \mathcal{A} $. The nondegeneracy of $\mathfrak{B}$ gives the bijectivity of $\psi$. For all $x,y,z \in \mathcal{A}$, we have
 \begin{eqnarray*}
 &&\psi(ad(x)y)z\stackrel{(\ref{eq:ik})}{=}\mathfrak{B}([x,y],z)\stackrel{(\ref{eq:1-1})}{=}\mathfrak{B}([y,x],z)\stackrel {(\ref{eq:hi})}{=}\mathfrak{B}(y,[x,z])\\
 &&\quad=\mathfrak{B}(y,ad(x)z)\stackrel {(\ref{eq:ik})}{=}\langle\psi(y),ad(x)z\rangle=\langle ad^*(x)\psi(y),z\rangle=ad^*(x)\psi(y)z,
 \end{eqnarray*}
 and similarly, 
 \begin{eqnarray*}
 &&\psi(N(x))y\stackrel{(\ref{eq:ik})}{=}\mathfrak{B}(N(x),y)\stackrel {(\ref{eq:fg})}{=}\mathfrak{B}(x,\widehat{N}(y))\stackrel {(\ref{eq:ik})}{=}\langle\psi(x),\widehat{N} (y)\rangle=\langle{\widehat{N}}^*(\psi(x)),y\rangle={\widehat{N}}^*(\psi(x))y.
 \end{eqnarray*}
 Hence, $(\mathcal{A}^*,ad^*,{\widehat{N}}^*)$ is equivalent to $(\mathcal{A},ad,N)$ as representations of $(\mathcal{A},[~,~],N)$.
 
 Conversely, suppose that $\psi:\mathcal{A}\longrightarrow \mathcal{A}^*$ is the linear isomorphism giving the equivalence between $(\mathcal{A},ad,N)$ and $(\mathcal{A}^*,ad^*,S^*)$. Define a bilinear form $\mathfrak{B}$ on $\mathcal{A}$ by
 \begin{eqnarray*}
 \mathfrak{B}(x,y):=\langle\psi(x),y\rangle,
 \end{eqnarray*}
 for all $x,y \in \mathcal{A}$. Then a similar argument gives the Nijenhuis    algebra $(\mathcal{A},[~,~],N)$ and $\widehat{N}=S$.
 \end{proof}

 \begin{defi}[\cite{BCHM}]\label{de:cf} Let $(\mathcal{A}, [~,~])$ and $(\mathcal{A}^*, [~,~]_{\mathcal{A}^*})$ be two mock-Lie algebras. A {\bf Manin triple of a mock-Lie algebra associated to $(\mathcal{A}, [~,~])$ and $(\mathcal{A}^*, [~,~]_{\mathcal{A}^*})$} is a quadruple $((\mathcal{A}\oplus \mathcal{A}^*,[~,~]_\diamond),\mathcal{A},\mathcal{A}^*,\mathfrak{B}_d)$ such that $(\mathcal{A}, [~,~])$ and $(\mathcal{A}^*, [~,~]_{\mathcal{A}^*})$ are mock-Lie subalgebras of the mock-Lie algebra $(\mathcal{A} \oplus \mathcal{A}^*,[~,~]_\diamond)$, and the natural nondegenerate symmetric bilinear form $\mathfrak{B}_d$ on $(\mathcal{A}\oplus \mathcal{A}^*,[~,~]_\diamond)$ given by 
 \begin{eqnarray}\label{eq:lpo}
 \mathfrak{B}_d(x+a^*,y+b^*)=\langle a^*,y\rangle+\langle b^*,x\rangle,\quad x,y \in \mathcal{A}, a^*,b^* \in \mathcal{A}^*,
 \end{eqnarray}
 is invariant.
 \end{defi}

 Now we extend this notion to Nijenhuis mock-Lie algebras.
 \begin{defi}\label{de:cf} Let $(\mathcal{A}, [~,~], N)$ and $(\mathcal{A}^*, [~,~]_{\mathcal{A}^*}, S^*)$ be two Nijenhuis mock-Lie algebras. A {\bf Manin triple $((\mathcal{A}\oplus \mathcal{A}^*,[~,~]_\diamond,N+S^*)$, $(\mathcal{A},N),(\mathcal{A}^*,S^*),\mathfrak{B}_d)$ of a Nijenhuis mock-Lie algebra associated to $(\mathcal{A}, [~,~], N)$ and $(\mathcal{A}^*, [~,~]_{\mathcal{A}^*}, S^*)$} is a Manin triple $((\mathcal{A}\oplus \mathcal{A}^*, [~,~]_\diamond),\mathcal{A},\mathcal{A}^*,\mathfrak{B}_d)$ of a mock-Lie algebra associated to $(\mathcal{A}, [~,~])$ and $(\mathcal{A}^*, [~,~]_{\mathcal{A}^*})$ such that $((\mathcal{A}\oplus \mathcal{A}^*,[~,~]_\diamond),N+S^*)$ is a Nijenhuis mock-Lie algebra. 
 \end{defi}

 \begin{lem}\label{lem:wb} Let $((\mathcal{A}\oplus \mathcal{A}^*,[~,~]_\diamond,N+S^*),(\mathcal{A},N),(\mathcal{A}^*,S^*),\mathfrak{B}_d)$ be a Manin triple of a Nijenhuis mock-Lie algebra associated to $(\mathcal{A}, [~,~], N)$ and $(\mathcal{A}^*, [~,~]_{\mathcal{A}^*}, S^*)$.
 \begin{enumerate}[(1)]
 \item \label{it:c6} The adjoint $\widehat{N+S^*}$ of $N+S^*$ with respect to $\mathfrak{B}_d$ is $S+N^*$. Furthermore,  $S+N^*$ is adjoint-admissible to $(\mathcal{A}\oplus \mathcal{A}^*,[~,~]_\diamond,N+S^*)$.
 \item \label{it:c7} $S$ is adjoint-admissible to $(\mathcal{A},[~,~],N)$.
 \item \label{it:c5} $N^*$ is adjoint-admissible to $(\mathcal{A}^*,[~,~]_{\mathcal{A}^*},S^*)$.
 \end{enumerate}
 \end{lem}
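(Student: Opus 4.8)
The plan is to establish the three assertions in order, using Proposition~\ref{pro:kl} as the central tool once we have identified the right bilinear form. First I would note that $\mathfrak{B}_d$ is, by Definition~\ref{de:cf}, a nondegenerate symmetric invariant bilinear form on the mock-Lie algebra $(\mathcal{A}\oplus\mathcal{A}^*,[~,~]_\diamond)$, and that $N+S^*$ is a Nijenhuis operator on it. To prove~\eqref{it:c6}, the key computation is the adjointness identity: for $x+a^*, y+b^*\in\mathcal{A}\oplus\mathcal{A}^*$, using~\eqref{eq:lpo} one has
\begin{eqnarray*}
\mathfrak{B}_d\big((N+S^*)(x+a^*),\,y+b^*\big)&=&\mathfrak{B}_d\big(N(x)+S^*(a^*),\,y+b^*\big)\\
&=&\langle S^*(a^*),y\rangle+\langle b^*,N(x)\rangle\\
&=&\langle a^*,S(y)\rangle+\langle N^*(b^*),x\rangle\\
&=&\mathfrak{B}_d\big(x+a^*,\,(S+N^*)(y+b^*)\big),
\end{eqnarray*}
so $\widehat{N+S^*}=S+N^*$ relative to $\mathfrak{B}_d$. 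Then Proposition~\ref{pro:kl}, applied to the Nijenhuis mock-Lie algebra $(\mathcal{A}\oplus\mathcal{A}^*,[~,~]_\diamond,N+S^*)$ together with the nondegenerate invariant form $\mathfrak{B}_d$, immediately yields that $\widehat{N+S^*}=S+N^*$ is adjoint-admissible to $(\mathcal{A}\oplus\mathcal{A}^*,[~,~]_\diamond,N+S^*)$, i.e., that Eq.~\eqref{eq:sfh} holds for $S+N^*$ on $\mathcal{A}\oplus\mathcal{A}^*$.

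For~\eqref{it:c7} and~\eqref{it:c5}, the idea is to \emph{restrict} the adjoint-admissibility identity~\eqref{eq:sfh} obtained in part~\eqref{it:c6}. Since $(\mathcal{A},[~,~])$ and $(\mathcal{A}^*,[~,~]_{\mathcal{A}^*})$ are mock-Lie subalgebras of $(\mathcal{A}\oplus\mathcal{A}^*,[~,~]_\diamond)$, and since $(N+S^*)|_{\mathcal{A}}=N$, $(S+N^*)|_{\mathcal{A}}=S$, evaluating
\begin{eqnarray*}
(S+N^*)\big([(N+S^*)(u),v]_\diamond\big)+[u,(S+N^*)^2(v)]_\diamond=[(N+S^*)(u),(S+N^*)(v)]_\diamond+(S+N^*)\big([u,(S+N^*)(v)]_\diamond\big)
\end{eqnarray*}
on $u,v\in\mathcal{A}$ and using that all brackets appearing stay inside $\mathcal{A}$ gives exactly Eq.~\eqref{eq:sfh} for $S$ on $(\mathcal{A},[~,~],N)$; this proves~\eqref{it:c7}. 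Symmetrically, evaluating on $u,v\in\mathcal{A}^*$ and using $(N+S^*)|_{\mathcal{A}^*}=S^*$, $(S+N^*)|_{\mathcal{A}^*}=N^*$, together with the fact that $\mathcal{A}^*$ is a mock-Lie subalgebra, gives Eq.~\eqref{eq:sfh} with the roles of the Nijenhuis operator and the admissible map filled by $S^*$ and $N^*$ respectively — which is precisely the statement that $N^*$ is adjoint-admissible to $(\mathcal{A}^*,[~,~]_{\mathcal{A}^*},S^*)$, proving~\eqref{it:c5}.

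The only genuinely delicate point is the \emph{closure under restriction}: one must check that in the identity from~\eqref{it:c6}, when both inputs lie in $\mathcal{A}$ (resp. $\mathcal{A}^*$), every intermediate expression also lies in $\mathcal{A}$ (resp. $\mathcal{A}^*$), so that the $\diamond$-bracket may be replaced by $[~,~]_\mathcal{A}$ (resp. $[~,~]_{\mathcal{A}^*}$) and the operator $S+N^*$ by its restriction. This is immediate from $\mathcal{A}$ and $\mathcal{A}^*$ being mock-Lie subalgebras and from the block-diagonal form~\eqref{eq:cgy} of $N+S^*$ and of $S+N^*$, but it is the step that actually uses the Manin triple hypothesis rather than a generic Nijenhuis structure. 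I expect the adjointness computation in~\eqref{it:c6} and this closure verification to be the main, though routine, content; parts~\eqref{it:c7} and~\eqref{it:c5} are then formal consequences. Alternatively, \eqref{it:c7} follows directly from Proposition~\ref{pro:kl} applied to $(\mathcal{A},[~,~],N)$ with the nondegenerate invariant form obtained by restricting the pairing, but deriving it by restriction from~\eqref{it:c6} is cleaner and avoids re-checking nondegeneracy.
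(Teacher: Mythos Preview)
Your proposal is correct and follows essentially the same route as the paper's own proof: the adjointness computation for part~\eqref{it:c6} is identical, and for parts~\eqref{it:c7} and~\eqref{it:c5} the paper also writes out the adjoint-admissibility identity~\eqref{eq:sfh} for $S+N^*$ on $\mathcal{A}\oplus\mathcal{A}^*$ and then specializes by setting $a^*=b^*=0$ (resp.\ $x=y=0$), which is exactly your restriction argument. Your explicit remark about closure under restriction makes the step cleaner than the paper's terse ``taking $a^*=b^*=0$'', but the content is the same.
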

 
 \begin{proof} \ref{it:c6} For all $x,y \in \mathcal{A}, a^*,b^* \in \mathcal{A}^*$, we have
 \begin{eqnarray*}
 \mathfrak{B}_d((N+S^*)(x+a^*),y+b^*)
 &=&\mathfrak{B}_d(N(x)+S^*(a^*),y+b^*)\\
 &\stackrel {(\ref{eq:lpo})}{=}&\langle S^*(a^*),y\rangle+\langle b^*,N(x)\rangle\\
 &=&\langle a^*,S(y)\rangle+\langle N^*(b^*),x\rangle\\
 &\stackrel {(\ref{eq:lpo})}{=}&\mathfrak{B}_d(x+a^*,(S+N^*)(y+b^*)).
 \end{eqnarray*}
 Hence $\widehat{N+S^*}=S+N^*$. Furthermore, by Proposition \mref{pro:kl}, $S+N^*$ is admissible to $(\mathcal{A}\oplus \mathcal{A}^*,[~,~]_\diamond,N+S^*)$.
 
 \ref{it:c7} By \ref{it:c6}, $S+N^*$ is adjoint-admissible to $(\mathcal{A}\oplus \mathcal{A}^*,[~,~]_\diamond,N+S^*)$. Then by Eq.(\ref{eq:fb}) and let $N=N+S^*$, $\beta=S+N^*$, for all $x,y \in \mathcal{A}$ and $a^*,b^* \in \mathcal{A}^*$, we obtain
 \begin{eqnarray*}
 &&(S+N^*)[(N+S^*)(x+a^*),y+b^*]+[x+a^*,(S+N^*)^2 (y+b^*)]\\
 &&-[(N+S^*)(x+a^*),(S+N^*)(y+b^*)]-(S+N^*)[x+a^*,(S+N^*)(y+b^*)]=0.
 \end{eqnarray*}
 Now taking $a^*=b^*=0$  in the equation above gives the admissibility of $S$ to $(\mathcal{A},[~,~],N)$.
 
 \ref{it:c5} Taking $x=y=0$ in the equation above gives the admissibility of $N^*$ to $(\mathcal{A}^*,[~,~]_{\mathcal{A}^*},S^*)$.
 \end{proof}

 \begin{lem}[\cite{BCHM}]\label{lem:io} Let $(\mathcal{A},[~,~])$ be a mock-Lie algebra. Suppose that there is a mock-Lie algebra structure $[~,~]_{\mathcal{A}^*}$ on its dual space $\mathcal{A}^*$. Then there is a Manin triple $((\mathcal{A}\oplus \mathcal{A}^*, [~,~]_\diamond),\mathcal{A},\mathcal{A}^*,\mathfrak{B}_d)$ of a mock-Lie algebra associated to $(\mathcal{A}, [~,~])$ and $(\mathcal{A}^*, [~,~]_{\mathcal{A}^*})$ if and only if $(\mathcal{A},\mathcal{A}^*,ad^*,Ad^*)$ is a matched pair of mock-Lie algebras.
 \end{lem}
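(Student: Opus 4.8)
This is a restatement of a known result (Lemma \ref{lem:io}, attributed to \cite{BCHM}) characterizing when a mock-Lie algebra structure on the dual space fits into a Manin triple. The plan is to prove it by a direct translation of the invariance condition for $\mathfrak{B}_d$ into the matched pair axioms, following the classical Lie bialgebra template.

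\textbf{Outline of the approach.}
First I would set up the bidirectional bracket. Given the mock-Lie algebra $(\mathcal{A},[~,~])$ and a mock-Lie algebra structure $[~,~]_{\mathcal{A}^*}$ on $\mathcal{A}^*$, the representations $ad^*:\mathcal{A}\to End(\mathcal{A}^*)$ and $Ad^*:\mathcal{A}^*\to End(\mathcal{A})$ (the coadjoint action of $\mathcal{A}^*$ on $\mathcal{A}$) are defined by duality as in Eq.\eqref{eq:sr}. Using Proposition \ref{pro:aa} with $\rho_{\mathcal{A}}=ad^*$ and $\rho_{\mathcal{A}^*}=Ad^*$, the bracket $[~,~]_\diamond$ on $\mathcal{A}\oplus\mathcal{A}^*$ is a mock-Lie algebra exactly when $(\mathcal{A},\mathcal{A}^*,ad^*,Ad^*)$ is a matched pair of mock-Lie algebras. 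So the content of the lemma is that, assuming $[~,~]_\diamond$ is well-defined by \eqref{eq:sff}, the quadruple $((\mathcal{A}\oplus\mathcal{A}^*,[~,~]_\diamond),\mathcal{A},\mathcal{A}^*,\mathfrak{B}_d)$ is a Manin triple if and only if that matched pair condition holds; i.e., one must show that \emph{invariance of $\mathfrak{B}_d$ is automatic once $[~,~]_\diamond$ is a mock-Lie bracket restricting correctly to the two subalgebras and built from the coadjoint actions}, and conversely that a Manin triple forces the cross actions to be precisely $ad^*$ and $Ad^*$.

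\textbf{Key steps in order.}
(1) For the ``if'' direction: assume $(\mathcal{A},\mathcal{A}^*,ad^*,Ad^*)$ is a matched pair, so by Proposition \ref{pro:aa}, $(\mathcal{A}\oplus\mathcal{A}^*,[~,~]_\diamond)$ is a mock-Lie algebra, and $\mathcal{A}$, $\mathcal{A}^*$ are visibly subalgebras. It remains to verify invariance $\mathfrak{B}_d([u,v]_\diamond,w)=\mathfrak{B}_d(u,[v,w]_\diamond)$ for $u,v,w\in\mathcal{A}\oplus\mathcal{A}^*$. By symmetry of $\mathfrak{B}_d$ and bilinearity it suffices to check the cases where each argument is homogeneous (in $\mathcal{A}$ or $\mathcal{A}^*$); the purely-$\mathcal{A}$ and purely-$\mathcal{A}^*$ cases vanish since $\mathfrak{B}_d$ pairs $\mathcal{A}$ with $\mathcal{A}^*$, and the mixed cases reduce, via Eq.\eqref{eq:lpo} and the defining duality \eqref{eq:sr} for $ad^*$ and $Ad^*$, to tautological identities such as $\langle ad^*(x)a^*,y\rangle=\langle a^*,[x,y]\rangle$. (2) For the ``only if'' direction: suppose the Manin triple exists. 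Because $\mathfrak{B}_d$ is invariant and nondegenerate and $\mathcal{A},\mathcal{A}^*$ are isotropic subalgebras, decompose $[x,a^*]_\diamond=\,$(component in $\mathcal{A}$)$\,+\,$(component in $\mathcal{A}^*$) and pair against arbitrary elements of $\mathcal{A}$ and $\mathcal{A}^*$; invariance forces the $\mathcal{A}^*$-component to be $ad^*(x)a^*$ and the $\mathcal{A}$-component to be $-Ad^*(a^*)x$ (or $+Ad^*(a^*)x$, depending on the sign convention fixed in \eqref{eq:sff}), so $[~,~]_\diamond$ has exactly the form \eqref{eq:sff} with $\rho_{\mathcal{A}}=ad^*$, $\rho_{\mathcal{A}^*}=Ad^*$. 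Then Proposition \ref{pro:aa} applied in reverse yields that $(\mathcal{A},\mathcal{A}^*,ad^*,Ad^*)$ is a matched pair.

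\textbf{Anticipated main obstacle.}
The genuine calculations are routine dualizations, so the only real subtlety is bookkeeping of signs: the mock-Lie representation axiom \eqref{eq:kp} carries a minus sign (unlike the Lie case), and the bracket \eqref{eq:sff} is written with all-plus cross terms, so one must be careful that the coadjoint actions $ad^*$, $Ad^*$ defined via \eqref{eq:sr} are the ones making \eqref{eq:sff} invariant rather than their negatives; getting a consistent sign convention between \eqref{eq:kp}, \eqref{eq:sr}, \eqref{eq:sff} and \eqref{eq:lpo} is where an error would creep in. Since this is essentially \cite[proof of the Manin triple theorem]{BCHM}, the write-up can legitimately be short: verify invariance case-by-case in the ``if'' direction, extract the cross brackets from nondegeneracy plus invariance in the ``only if'' direction, and invoke Proposition \ref{pro:aa} to close both directions.
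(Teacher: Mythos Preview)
The paper does not supply its own proof of this lemma: it is stated with a citation to \cite{BCHM} and no argument is given. Your proposal is the standard and correct proof strategy for this kind of result---verifying $\mathfrak{B}_d$-invariance case by case in the ``if'' direction and using nondegeneracy plus invariance to identify the cross actions with $ad^*$, $Ad^*$ in the ``only if'' direction, then invoking Proposition~\ref{pro:aa}---so there is nothing to compare against beyond noting that what you wrote is presumably what \cite{BCHM} does.
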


 \begin{thm}\label{thm:hdg} Let $(\mathcal{A},[~,~],N)$ be a Nijenhuis mock-Lie algebra. Suppose that there is a Nijenhuis mock-Lie algebra structure $(\mathcal{A}^*,[~,~]_{\mathcal{A}^*},S^*)$ on its dual space $\mathcal{A}^*$. Then there is a Manin triple $((\mathcal{A}\oplus \mathcal{A}^*,[~,~]_\diamond,N+S^*),(\mathcal{A},N),(\mathcal{A}^*,S^*),\mathfrak{B}_d)$ of a Nijenhuis mock-Lie algebra associated to $(\mathcal{A}, [~,~], N)$ and $(\mathcal{A}^*, [~,~]_{\mathcal{A}^*}, S^*)$ if and only if $((\mathcal{A},N),(\mathcal{A}^*,S^*),ad^*,Ad^*)$ is a matched pair of Nijenhuis mock-Lie algebras.
 \end{thm}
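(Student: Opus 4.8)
The plan is to deduce this from the corresponding statement for plain mock-Lie algebras (Lemma~\ref{lem:io}) together with the compatibility with the Nijenhuis operators worked out in Section~\ref{Sec2}. First I would recall that, by definition, a Manin triple of a Nijenhuis mock-Lie algebra is a Manin triple of mock-Lie algebras $((\mathcal{A}\oplus\mathcal{A}^*,[~,~]_\diamond),\mathcal{A},\mathcal{A}^*,\mathfrak{B}_d)$ with the extra requirement that $((\mathcal{A}\oplus\mathcal{A}^*,[~,~]_\diamond),N+S^*)$ be a Nijenhuis mock-Lie algebra; and that a matched pair of Nijenhuis mock-Lie algebras is a matched pair of mock-Lie algebras $(\mathcal{A},\mathcal{A}^*,ad^*,Ad^*)$ such that, in addition, $(\mathcal{A},ad^*,N)$ is a representation of $(\mathcal{A}^*,[~,~]_{\mathcal{A}^*},S^*)$ and $(\mathcal{A}^*,Ad^*,S^*)$ is a representation of $(\mathcal{A},[~,~],N)$. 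By Lemma~\ref{lem:io} the underlying mock-Lie structures are equivalent on both sides, so the whole statement reduces to proving the equivalence of the two ``extra'' conditions, under the standing hypothesis that $(\mathcal{A},[~,~],N)$ and $(\mathcal{A}^*,[~,~]_{\mathcal{A}^*},S^*)$ are already Nijenhuis mock-Lie algebras and $(\mathcal{A},\mathcal{A}^*,ad^*,Ad^*)$ is a matched pair.

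The key step is then the following observation: the operator $N+S^*$ on $\mathcal{A}\oplus\mathcal{A}^*$ is, by the computation of Lemma~\ref{lem:wb}\ref{it:c6}, self-adjoint-ish with adjoint $S+N^*$ with respect to $\mathfrak{B}_d$, but here we want $N+S^*$ itself to be a Nijenhuis operator on $[~,~]_\diamond$. I would expand the Nijenhuis identity \eqref{eq:bja} for $N+S^*$ on a general pair $a+x, b+y$ with $a,b\in\mathcal{A}$, $x,y\in\mathcal{A}^*$, using formula \eqref{eq:sff} for $[~,~]_\diamond$ and \eqref{eq:cgy} for $N+S^*$ (with $N_\mathcal{A}=N$, $N_{\mathcal{A'}}=S^*$, $\rho_\mathcal{A}=ad^*$, $\rho_{\mathcal{A'}}=Ad^*$). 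Since $N$ is already Nijenhuis on $\mathcal{A}$ and $S^*$ on $\mathcal{A}^*$, the purely-$\mathcal{A}$ and purely-$\mathcal{A}^*$ components vanish automatically; the surviving cross terms live in $\mathcal{A}$ and in $\mathcal{A}^*$ and, after collecting, are precisely the two representation-compatibility equations \eqref{eq:1} demanded of $(\mathcal{A},ad^*,N)$ as a representation of $(\mathcal{A}^*,[~,~]_{\mathcal{A}^*},S^*)$ and of $(\mathcal{A}^*,Ad^*,S^*)$ as a representation of $(\mathcal{A},[~,~],N)$. This is exactly the bookkeeping already displayed in the proof of Theorem~\ref{thm:gp}: there the same expansion shows $(\mathcal{A}\oplus\mathcal{A'},[~,~]_\diamond,N_\mathcal{A}+N_{\mathcal{A'}})$ is Nijenhuis iff the two pieces are representations, and one simply specializes $\mathcal{A'}=\mathcal{A}^*$, $N_{\mathcal{A'}}=S^*$, $\rho_\mathcal{A}=ad^*$, $\rho_{\mathcal{A'}}=Ad^*$.

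So the proof proceeds: assume the Manin triple exists; by Lemma~\ref{lem:io} we get that $(\mathcal{A},\mathcal{A}^*,ad^*,Ad^*)$ is a matched pair of mock-Lie algebras, and by Theorem~\ref{thm:gp} (applied with $\mathcal{A'}=\mathcal{A}^*$) the Nijenhuis condition on $(\mathcal{A}\oplus\mathcal{A}^*,[~,~]_\diamond,N+S^*)$ is equivalent to $(\mathcal{A},ad^*,N)$ and $(\mathcal{A}^*,Ad^*,S^*)$ being representations in the Nijenhuis sense, i.e. to $((\mathcal{A},N),(\mathcal{A}^*,S^*),ad^*,Ad^*)$ being a matched pair of Nijenhuis mock-Lie algebras. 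Conversely, given such a matched pair, Lemma~\ref{lem:io} yields the mock-Lie Manin triple and Theorem~\ref{thm:gp} upgrades $N+S^*$ to a Nijenhuis operator, so we obtain a Manin triple of Nijenhuis mock-Lie algebras. The main obstacle is purely organizational rather than conceptual: one must check that the invariance of $\mathfrak{B}_d$ and the representation structures $ad^*$, $Ad^*$ are genuinely the correct (dual) ones so that Theorem~\ref{thm:gp} applies verbatim, and that no extra compatibility between $N$, $S^*$ and $\mathfrak{B}_d$ sneaks in beyond what Lemma~\ref{lem:wb} already records; I expect this to come out cleanly because $N+S^*$ has been defined precisely so that $\widehat{N+S^*}=S+N^*$ respects the pairing.
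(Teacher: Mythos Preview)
Your proposal is correct and follows essentially the same strategy as the paper: reduce to Lemma~\ref{lem:io} for the underlying mock-Lie Manin triple/matched pair equivalence, and handle the Nijenhuis layer separately. The only minor divergence is in the forward direction: the paper extracts the two representation conditions via Lemma~\ref{lem:wb} (using that $\widehat{N+S^*}=S+N^*$ together with Proposition~\ref{pro:kl}), whereas you invoke Theorem~\ref{thm:gp} directly as an ``if and only if'' to read off the same conditions from the Nijenhuis identity on $\mathcal{A}\oplus\mathcal{A}^*$. Both routes are valid; yours is arguably more streamlined since Theorem~\ref{thm:gp} already packages the computation you need, while the paper's route has the advantage of making explicit the role of $\mathfrak{B}_d$ and the adjoint operator. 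Your caveat at the end---that one must check the bracket $[~,~]_\diamond$ in the Manin triple really is the one from \eqref{eq:sff} with $\rho_\mathcal{A}=ad^*$, $\rho_{\mathcal{A'}}=Ad^*$---is exactly right and is implicitly handled by Lemma~\ref{lem:io}.
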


 \begin{proof} $(\Longrightarrow)$ Let $((\mathcal{A}\oplus \mathcal{A}^*,[~,~]_\diamond,N+S^*),(\mathcal{A},N),(\mathcal{A}^*,S^*),\mathfrak{B}_d)$ be a Manin triple of a Nijenhuis mock-Lie algebra associated to $(\mathcal{A}, [~,~], N)$ and $(\mathcal{A}^*, [~,~]_{\mathcal{A}^*}, S^*)$. Then $((\mathcal{A}\oplus \mathcal{A}^*,[~,~]_\diamond),\mathcal{A},\mathcal{A}^*,\mathfrak{B}_d)$ is a Manin triple of a mock-Lie algebra associated to $(\mathcal{A}, [~,~])$ and $(\mathcal{A}^*, [~,~]_{\mathcal{A}^*})$. By Lemma \mref{lem:io}, $(\mathcal{A},\mathcal{A}^*,ad^*,Ad^*)$ is a matched pair of mock-Lie algebras. Furthermore, by Lemma \mref{lem:wb}, $(\mathcal{A}^*,ad^*,S^*)$ is a representation of $(\mathcal{A},[~,~],N)$ and $(\mathcal{A},Ad^*,N)$ is a representation of $(\mathcal{A}^*,[~,~]_{\mathcal{A}^*},S^*)$. Hence, $((\mathcal{A},N),(\mathcal{A}^*,S^*),ad^*,Ad^*)$ is a matched pair of Nijenhuis mock-Lie algebras.
 
 $(\Longleftarrow)$ If $((\mathcal{A},N),(\mathcal{A}^*,S^*),ad^*,Ad^*)$ is a matched pair of Nijenhuis mock-Lie algebras, then $(\mathcal{A},\mathcal{A}^*$, $ad^*,Ad^*)$ is a matched pair of mock-Lie algebras. By Lemma \mref{lem:io}, $((\mathcal{A}\oplus \mathcal{A}^*,[~,~]_\diamond),\mathcal{A},\mathcal{A}^*,\mathfrak{B}_d)$ is a Manin triple of a mock-Lie algebra associated to $(\mathcal{A}, [~,~])$ and $(\mathcal{A}^*, [~,~]_{\mathcal{A}^*})$. Furthermore, $(\mathcal{A}\oplus \mathcal{A}^*,[~,~]_\diamond, N+S^*)$ is a Nijenhuis mock-Lie algebra by Theorem \ref{thm:gp}. Hence, $((\mathcal{A}\oplus \mathcal{A}^*,[~,~]_\diamond,N+S^*),(\mathcal{A},N),(\mathcal{A}^*,S^*),\mathfrak{B}_d)$ is a Manin triple of a Nijenhuis mock-Lie algebra associated to $(\mathcal{A}, [~,~], N)$ and $(\mathcal{A}^*, [~,~]_{\mathcal{A}^*}, S^*)$.
 \end{proof}

 \subsection{Nijenhuis mock-Lie bialgebras} For a finite dimensional vector space $\mathcal{A}$, $(\mathcal{A}^*,[~,~]_{\mathcal{A}^*},S^*)$ is a Nijenhuis mock-Lie algebra if and only if $(\mathcal{A},\D,S)$ is a Nijenhuis mock-Lie coalgebra, where $\D:\mathcal{A}\longrightarrow \mathcal{A}\o \mathcal{A}$ is the linear dual $[~,~]_{\mathcal{A}^*}:\mathcal{A}^* \otimes \mathcal{A}^* \longrightarrow \mathcal{A}^*$, that is, for all $x\in \mathcal{A}, a^*,b^* \in \mathcal{A}^*$,
 \begin{eqnarray*}
 \langle\D(x),a^* \otimes b^*\rangle=\langle x,[a^*,b^*]_{\mathcal{A}^*}\rangle.
 \end{eqnarray*}
 Moreover, for a linear map $N:\mathcal{A}\longrightarrow \mathcal{A}$, the condition that $N^*$ is adjoint-admissible to the Nijenhuis mock-Lie algebra $(\mathcal{A}^*,[~,~]_{\mathcal{A}^*},S^*)$, that is, for all $a^*,b^* \in \mathcal{A}^*$,
 \begin{eqnarray*}
 &N^*([S^*(a^*),b^*])+[a^*,{N^*}^2(b^*)]=[S^*(a^*),N^*(b^*)]+N^*([a^*,N^*(b^*)]),&
 \end{eqnarray*}
can be rewritten in terms of $\D$ as
 \begin{eqnarray}\label{eq:jy}
 &(S \otimes \id)\D N+(\id \otimes N^2)\D=(S\otimes N)\D+(\id \otimes N)\D N.&
 \end{eqnarray}

 \begin{defi}\label{de:yy} A Nijenhuis mock-Lie bialgebra is a vector space $\mathcal{A}$  together with linear maps $[~,~]: \mathcal{A}\otimes \mathcal{A}\longrightarrow\mathcal{A}$, $\D : \mathcal{A}\longrightarrow \mathcal{A}\otimes \mathcal{A}$, $N, S: \mathcal{A}\longrightarrow \mathcal{A}$ such that
 \begin{enumerate}[(1)]
 \item  $(\mathcal{A},[~,~],\D)$ is a mock-Lie bialgebra.
 \item  $(\mathcal{A},[~,~],N)$ is a Nijenhuis  mock-Lie algebra.
 \item  $(\mathcal{A},\D,S)$ is a Nijenhuis mock-Lie coalgebra.
 \item  $S$ is adjoint-admissible to $(\mathcal{A},[~,~],N)$, that is, Eq.(\ref{eq:sfh}) holds.
 \item  $N^*$ is adjoint-admissible to $(\mathcal{A}^*,\D^*,S^*)$, that is, Eq.(\ref{eq:jy}) holds.
 \end{enumerate}
 We denote the Nijenhuis mock-Lie bialgebra by $(\mathcal{A},[~,~],N,\D,S)$.
 \end{defi}

 \begin{ex}\label{ex:ng} Let $(\mathcal{A},[~,~])$ be a mock-Lie algebra with basis $\mathcal{B}=\{e_1,e_2,e_3,e_4\}$ and the product $[~,~]$ defined by
 \begin{center}
        \begin{tabular}{r|rrrr}
          $[~,~]$ & $e_1$  & $e_2$ & $e_3$ & $e_4$  \\
          \hline
           $e_1$ & $e_2$  & $0$  & $e_4$ & $0$\\
           $e_2$ & $0$  &  $0$  & $0$ & $0$\\
           $e_3$ & $e_4$ & $0$ & $0$ & $0$\\
           $e_4$ & $0$ & $0$ & $0$ & $0$\\
        \end{tabular}.
        \end{center}
        Define
        $$\left\{
            \begin{array}{l}
             N(e_1)=e_1\\
             N(e_2)=e_2+e_3\\
             N(e_3)=e_3\\
             N(e_4)=e_4\\
            \end{array}
            \right.. $$
     Then $(\mathcal{A},[~,~],N)$ is a Nijenhuis mock-Lie algebra. Set
        $$\left\{
            \begin{array}{l}
            \D(e_1)=-e_4\o e_2-e_2\o e_4\\
             \D(e_2)=0\\
             \D(e_3)=0\\
             \D(e_4)=0\\
            \end{array}
            \right.$$
        and 
      $$\left\{
            \begin{array}{l}
             S(e_1)=e_1+\l e_3+\g e_4\\
             S(e_2)=e_2-e_3\\
             S(e_3)=e_3\\
             S(e_4)=e_4\\
            \end{array}
            \right.\quad (\l, \g~\hbox{are parameters}). $$
      Then $(\mathcal{A},\D,S)$ is a Nijenhuis mock-Lie coalgebra.\\ Furthermore, $(\mathcal{A},[~,~],\D,N,S)$ is a Nijenhuis mock-Lie bialgebra.
 \end{ex}

 \begin{lem}[{\cite{BCHM}}]\label{lem:jn}
 Let $(\mathcal{A},[~,~])$ be a mock-Lie algebra. Suppose that there is a mock-Lie algebra structure $[~,~]_{\mathcal{A}^*}$ on its dual space $\mathcal{A}^*$. Then the triple $(\mathcal{A},[~,~],\D)$ is a mock-Lie bialgebra if and only if $(\mathcal{A},\mathcal{A}^*,ad^*,Ad^*)$  is a matched pair of mock-Lie algebras.
 \end{lem}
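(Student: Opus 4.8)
### Proof Proposal for Lemma \ref{lem:jn}

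The plan is to reduce the statement entirely to the known equivalence for mock-Lie bialgebras, i.e.\ to dualization. The key observation is that the defining compatibility condition \eqref{eq:b2} for a mock-Lie bialgebra $(\mathcal{A},[~,~],\D)$ is, after passing to $\mathcal{A}^*$, precisely one of the two matched-pair axioms appearing in Proposition \ref{pro:aa} (with $\mathcal{A}'=\mathcal{A}^*$, $\rho_\mathcal{A}=ad^*$ and $\rho_{\mathcal{A}^*}=Ad^*$), while the remaining matched-pair data—that $(\mathcal{A}^*,ad^*)$ is a representation of $\mathcal{A}$ and $(\mathcal{A},Ad^*)$ is a representation of $\mathcal{A}^*$—are automatic from $(\mathcal{A},[~,~])$ and $(\mathcal{A}^*,[~,~]_{\mathcal{A}^*})$ being mock-Lie algebras, via the fact (cited from \cite{BCHM} just before Lemma \ref{lem:HO}) that $(\mathcal{A}^*,ad^*)$ is a representation whenever $(\mathcal{A},ad)$ is.

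First I would fix a finite basis $\{e_i\}$ of $\mathcal{A}$ with dual basis $\{e^i\}$ of $\mathcal{A}^*$, write $\D(x)=x_{(1)}\o x_{(2)}$, and express $[e^i,e^j]_{\mathcal{A}^*}$ in terms of the structure constants of $\D$ via $\langle x,[a^*,b^*]_{\mathcal{A}^*}\rangle=\langle\D(x),a^*\o b^*\rangle$, exactly as in the paragraph preceding Definition \ref{de:yy}. Then I would unwind the two matched-pair identities of Proposition \ref{pro:aa} for the pair $(\mathcal{A},\mathcal{A}^*,ad^*,Ad^*)$, pairing each against an arbitrary test element of $\mathcal{A}$ (resp.\ $\mathcal{A}^*$). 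Using $\langle ad^*(x)a^*,y\rangle=\langle a^*,ad(x)y\rangle=-\langle a^*,[x,y]\rangle$ together with commutativity \eqref{eq:1-1} of the bracket and cocommutativity \eqref{eq:cml-1} of $\D$, one of the two identities collapses to a trivial statement (it is symmetric in the roles already guaranteed), and the other becomes, term by term, the coefficient form of \eqref{eq:b2}. This is the computational heart of the argument, but it is entirely bookkeeping: transpose $Ad^*$ to $ad$, transpose $ad^*$ to $ad$, and recognize the four summands $-[x,y_{(1)}]\o y_{(2)}$, $-y_{(1)}\o[x,y_{(2)}]$, $-[y,x_{(1)}]\o x_{(2)}$, $-x_{(1)}\o[y,x_{(2)}]$.

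For the converse direction, I would simply read the same computation backwards: if $(\mathcal{A},\mathcal{A}^*,ad^*,Ad^*)$ is a matched pair of mock-Lie algebras, then $(\mathcal{A}^*,[~,~]_{\mathcal{A}^*})$ is a mock-Lie algebra (part of the matched-pair data), so by Proposition \ref{pro:de:cl}\eqref{it:de:cl2} its dual comultiplication $\D$ makes $(\mathcal{A},\D)$ a mock-Lie coalgebra; and the nontrivial matched-pair axiom dualizes back to \eqref{eq:b2}, giving that $(\mathcal{A},[~,~],\D)$ is a mock-Lie bialgebra. I expect the main obstacle to be purely notational: keeping the Sweedler indices and the dual pairings aligned while verifying which of the two matched-pair equations of Proposition \ref{pro:aa} corresponds to \eqref{eq:b2} and which is automatically satisfied. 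Since the statement is quoted directly from \cite{BCHM}, in the write-up it suffices to indicate this dualization and refer to that source, as the authors do for the analogous Lemma \ref{lem:io}.
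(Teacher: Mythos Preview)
The paper gives no proof of this lemma at all: it is stated with the attribution \cite{BCHM} and the authors simply cite that source, exactly as you anticipate in your final sentence. Your dualization sketch is the standard argument and is the right approach; citing \cite{BCHM} is what the paper does.

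Two small corrections if you do write out the details. First, per Eq.~\eqref{eq:sr} there is no minus sign: $\langle ad^*(x)a^*,y\rangle=\langle a^*,[x,y]\rangle$, not $-\langle a^*,[x,y]\rangle$. Second, your claim that one of the two matched-pair identities in Proposition~\ref{pro:aa} ``collapses to a trivial statement'' is not accurate. With $\mathcal{A}'=\mathcal{A}^*$, $\rho_\mathcal{A}=ad^*$, $\rho_{\mathcal{A}'}=Ad^*$, the two identities are exchanged under the swap $\mathcal{A}\leftrightarrow\mathcal{A}^*$, and each one, after pairing, is equivalent to \eqref{eq:b2}; neither is vacuous. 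So the correct phrasing is that the two identities are \emph{dual to one another} and both reduce to the single bialgebra compatibility \eqref{eq:b2}, not that one is automatic.
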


 \begin{thm}\label{thm:hup} Let $(\mathcal{A},[~,~],N)$ be a Nijenhuis mock-Lie algebra. Suppose that there is a  Nijenhuis mock-Lie algebra structure $(\mathcal{A}^*,[~,~]_{\mathcal{A}^*},S^*)$ on its dual space $\mathcal{A}^*$. Then the quintuple $(\mathcal{A},[~,~],N,\D,S)$ is a Nijenhuis mock-Lie bialgebra if and only if $((\mathcal{A},N),(\mathcal{A}^*,S^*),ad^*,Ad^*)$ is a matched pair of Nijenhuis mock-Lie algebras.
 \end{thm}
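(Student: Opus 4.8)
The plan is to match the five axioms of a Nijenhuis mock-Lie bialgebra (Definition~\ref{de:yy}) one-to-one against the content of a matched pair of Nijenhuis mock-Lie algebras (Definition~\ref{de:mv}), using three facts already available: Lemma~\ref{lem:jn}, Lemma~\ref{lem:HO}, and the reformulation of $N^*$-adjoint-admissibility as Eq.~\eqref{eq:jy} displayed immediately before Definition~\ref{de:yy}. The first observation is that both sides of the asserted equivalence are stated under the standing (finite-dimensional) hypothesis that $(\mathcal{A},[~,~],N)$ and $(\mathcal{A}^*,[~,~]_{\mathcal{A}^*},S^*)$ are Nijenhuis mock-Lie algebras; hence axioms (2) and (3) of Definition~\ref{de:yy} hold automatically, and moreover the coadjoint pairs $(\mathcal{A}^*,ad^*)$ and $(\mathcal{A},Ad^*)$ are automatically representations of the respective underlying mock-Lie algebras. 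Consequently, on the bialgebra side only axioms (1), (4), (5) carry information, and on the matched-pair side only the mock-Lie matched-pair condition on $(\mathcal{A},\mathcal{A}^*,ad^*,Ad^*)$ together with the two Nijenhuis-compatibility conditions of Eq.~\eqref{eq:1} --- one for $(\mathcal{A}^*,ad^*,S^*)$ over $(\mathcal{A},[~,~],N)$, one for $(\mathcal{A},Ad^*,N)$ over $(\mathcal{A}^*,[~,~]_{\mathcal{A}^*},S^*)$ --- carry information.

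For the forward direction I would assume $(\mathcal{A},[~,~],N,\D,S)$ is a Nijenhuis mock-Lie bialgebra and chain together the following: axiom (1) together with Lemma~\ref{lem:jn} gives that $(\mathcal{A},\mathcal{A}^*,ad^*,Ad^*)$ is a matched pair of mock-Lie algebras; axiom (4), which is exactly Eq.~\eqref{eq:sfh}, together with the ``in particular'' part of Lemma~\ref{lem:HO} gives that $(\mathcal{A}^*,ad^*,S^*)$ is a representation of the Nijenhuis mock-Lie algebra $(\mathcal{A},[~,~],N)$; and axiom (5), which by the computation preceding Definition~\ref{de:yy} says precisely that $N^*$ is adjoint-admissible to $(\mathcal{A}^*,[~,~]_{\mathcal{A}^*},S^*)$, together with Lemma~\ref{lem:HO} applied to the Nijenhuis mock-Lie algebra $(\mathcal{A}^*,[~,~]_{\mathcal{A}^*},S^*)$, its coadjoint representation $(\mathcal{A}^*,Ad)$, and the linear map $N^*$ (using the canonical identification $\mathcal{A}^{**}\cong\mathcal{A}$, under which $(N^*)^*=N$ and $(Ad)^*=Ad^*$), gives that $(\mathcal{A},Ad^*,N)$ is a representation of $(\mathcal{A}^*,[~,~]_{\mathcal{A}^*},S^*)$. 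These three conclusions are exactly the hypotheses of Definition~\ref{de:mv} for $((\mathcal{A},N),(\mathcal{A}^*,S^*),ad^*,Ad^*)$. The converse is obtained by running each of the three equivalences backwards: axiom (1) is recovered from Lemma~\ref{lem:jn}, axiom (4) from Lemma~\ref{lem:HO}, and axiom (5) from the dual form of Lemma~\ref{lem:HO} followed by the reformulation~\eqref{eq:jy}, while axioms (2) and (3) hold by hypothesis.

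The only step I expect to require genuine care rather than a bare citation is the ``dual'' use of Lemma~\ref{lem:HO}: one must check that applying that lemma verbatim with $\mathcal{A}$ replaced by $\mathcal{A}^*$ --- i.e. to the Nijenhuis mock-Lie algebra $(\mathcal{A}^*,[~,~]_{\mathcal{A}^*},S^*)$, its coadjoint representation $(\mathcal{A}^*,Ad)$, and the map $N^*:\mathcal{A}^*\to\mathcal{A}^*$ --- yields, upon identifying $\mathcal{A}^{**}$ with $\mathcal{A}$, that $(\mathcal{A},Ad^*,N)$ is a representation of $(\mathcal{A}^*,[~,~]_{\mathcal{A}^*},S^*)$ if and only if $N^*$ satisfies the $\mathcal{A}^*$-analogue of Eq.~\eqref{eq:sfh}, and that this analogue, after translating the bracket on $\mathcal{A}^*$ into the comultiplication $\D$ on $\mathcal{A}$, is precisely Eq.~\eqref{eq:jy}. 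This is nothing more than the bracket-to-comultiplication dictionary already recorded just before Definition~\ref{de:yy}, so no computation beyond making that translation explicit is needed; everything else in the proof is bookkeeping of the correspondence of axioms.
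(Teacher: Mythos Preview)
Your proposal is correct and follows essentially the same approach as the paper's own proof: both arguments reduce the equivalence to Lemma~\ref{lem:jn} for axiom~(1), to the adjoint-admissibility criterion (Lemma~\ref{lem:HO} and its dual form encoded in Eq.~\eqref{eq:jy}) for axioms~(4) and~(5), and observe that axioms~(2) and~(3) are part of the standing hypotheses. Your write-up is in fact slightly more explicit than the paper's in spelling out the dual application of Lemma~\ref{lem:HO} and the identification $\mathcal{A}^{**}\cong\mathcal{A}$, which the paper leaves implicit.
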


 \begin{proof}$(\Longrightarrow)$ If $(\mathcal{A},[~,~],N,\D,S)$ is a Nijenhuis mock-Lie bialgebra, then by Definition \mref{de:yy}, we know that $(\mathcal{A},[~,~],\D)$ is a mock-Lie bialgebra and $S$, $N^*$ are admissible to $(\mathcal{A},[~,~],N)$ and $(\mathcal{A}^*,[~,~]_{\mathcal{A}^*}$, $S^*)$ respectively. It means that $(\mathcal{A}^*,ad^*,S^*)$ is a representation of $(\mathcal{A},[~,~],N)$ and $(\mathcal{A},Ad^*,N)$ is a representation of $(\mathcal{A}^*,[~,~]_{\mathcal{A}^*},S^*)$. By Lemma \mref{lem:jn}, $(\mathcal{A},\mathcal{A}^*,ad^*,Ad^*)$ is a matched pair of mock-Lie algebras. Therefore, $((\mathcal{A},N),(\mathcal{A}^*,S^*),ad^*,Ad^*)$ is a matched pair of Nijenhuis mock-Lie algebras.
 
 $(\Longleftarrow)$ If $((\mathcal{A},N),(\mathcal{A}^*,S^*),ad^*,Ad^*)$ is a matched pair of Nijenhuis mock-Lie algebras, then by Definition \mref{de:mv}, $(\mathcal{A},\mathcal{A}^*,ad^*,Ad^*)$ is a matched pair of mock-Lie algebras and $S$, $N^*$ are admissible to $(\mathcal{A},[~,~],N)$ and $(\mathcal{A}^*,[~,~]_{\mathcal{A}^*},S^*)$ respectively. Hence, by Lemma \mref{lem:jn}, $(\mathcal{A},[~,~],N,\D,S)$ is a Nijenhuis mock-Lie bialgebra.
 \end{proof}

 \begin{thm}\label{thm:pog} Let $(\mathcal{A},[~,~],N)$ and $(\mathcal{A}^*,\D^*,S^*)$ be two Nijenhuis mock-Lie algebras. Then the following conditions are equivalent:
 \begin{enumerate}[(1)]
 \item $((\mathcal{A},N),(\mathcal{A}^*,S^*),ad^*,Ad^*)$ is a matched pair of Nijenhuis mock-Lie algebras.
 \item There is a Manin triple $((\mathcal{A}\oplus \mathcal{A}^*,[~,~]_\diamond,N+S^*),(\mathcal{A},N),(\mathcal{A}^*,S^*),\mathfrak{B}_d)$ of a Nijenhuis mock-Lie algebra associated to $(\mathcal{A}, [~,~], N)$ and $(\mathcal{A}^*, [~,~]_{\mathcal{A}^*}, S^*)$.
 \item  $(\mathcal{A},[~,~],\D,N,S)$ is a Nijenhuis mock-Lie bialgebra.
 \end{enumerate}
 \end{thm}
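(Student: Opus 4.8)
The plan is to prove the three-way equivalence by establishing the two implications that have already been isolated as named results in the excerpt, namely Theorem~\ref{thm:hdg} and Theorem~\ref{thm:hup}. Specifically, I would first observe that statement (1) of the present theorem is \emph{literally} the hypothesis-conclusion pair of both of those theorems, so there is essentially nothing new to do: the equivalence $(1)\Leftrightarrow(2)$ is precisely Theorem~\ref{thm:hdg}, and the equivalence $(1)\Leftrightarrow(3)$ is precisely Theorem~\ref{thm:hup}. The only genuine content is to check that the standing hypotheses line up: in Theorem~\ref{thm:pog} we are given that $(\mathcal{A}^*,\D^*,S^*)$ is a Nijenhuis mock-Lie algebra, and by the finite-dimensional duality discussion opening Subsection~4.3 (the passage relating $(\mathcal{A}^*,[~,~]_{\mathcal{A}^*},S^*)$ to $(\mathcal{A},\D,S)$ and the rewriting \eqref{eq:jy}), this is the same data as a Nijenhuis mock-Lie coalgebra structure $(\mathcal{A},\D,S)$ on $\mathcal{A}$, so both cited theorems apply verbatim with $[~,~]_{\mathcal{A}^*}$ taken to be $\D^*$.

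Concretely, I would write: $(1)\Leftrightarrow(3)$ is immediate from Theorem~\ref{thm:hup} once one notes that the comultiplication $\D$ appearing in (3) is by definition the linear dual of the multiplication $[~,~]_{\mathcal{A}^*}=\D^*$ on $\mathcal{A}^*$, and that the two ``adjoint-admissibility'' conditions (4) and (5) of Definition~\ref{de:yy} are exactly the statements that $(\mathcal{A}^*,ad^*,S^*)$ is a representation of $(\mathcal{A},[~,~],N)$ and that $(\mathcal{A},Ad^*,N)$ is a representation of $(\mathcal{A}^*,[~,~]_{\mathcal{A}^*},S^*)$, which together with Lemma~\ref{lem:jn} is what ``matched pair of Nijenhuis mock-Lie algebras'' unwinds to. Likewise $(1)\Leftrightarrow(2)$ is Theorem~\ref{thm:hdg}. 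Transitivity then gives $(2)\Leftrightarrow(3)$ and completes the cycle.

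There is no real obstacle here; the proof is bookkeeping. The one point requiring a sentence of care is the finite-dimensionality: Lemma~\ref{lem:jn}, Lemma~\ref{lem:io}, and the coalgebra-algebra duality all presuppose $\mathcal{A}$ finite-dimensional (so that $\mathcal{A}^{**}\cong\mathcal{A}$ and $\D=([~,~]_{\mathcal{A}^*})^*$ makes sense), and I would state this hypothesis explicitly or inherit it from the ambient convention of Subsection~4.3. Thus the proof reads simply:

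\begin{proof}
By the duality between finite-dimensional Nijenhuis mock-Lie algebras and Nijenhuis mock-Lie coalgebras recalled at the beginning of Subsection~4.3, giving a Nijenhuis mock-Lie algebra structure $(\mathcal{A}^*,\D^*,S^*)$ on $\mathcal{A}^*$ is the same as giving a Nijenhuis mock-Lie coalgebra $(\mathcal{A},\D,S)$, with $[~,~]_{\mathcal{A}^*}=\D^*$. Hence the equivalence $(1)\Leftrightarrow(2)$ is precisely Theorem~\ref{thm:hdg}, applied with $[~,~]_{\mathcal{A}^*}=\D^*$, and the equivalence $(1)\Leftrightarrow(3)$ is precisely Theorem~\ref{thm:hup}. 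Combining these two equivalences yields $(2)\Leftrightarrow(3)$ as well, so all three statements are equivalent.
\end{proof}
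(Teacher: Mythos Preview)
Your proposal is correct and matches the paper's own proof, which simply reads ``It is obvious by Theorem~\ref{thm:hdg} and Theorem~\ref{thm:hup}.'' Your additional remarks on the finite-dimensional duality and the alignment of hypotheses are more careful than the paper itself, but the underlying argument is identical.
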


 \begin{proof}
 It is obvious by Theorem \mref{thm:hdg} and Theorem \mref{thm:hup}.
 \end{proof}

\subsection{Coboundary Nijenhuis mock-Lie bialgebras}
 \begin{defi}\label{de:bh}
 A Nijenhuis mock-Lie bialgebra $(\mathcal{A},[~,~],\D,N,S)$ is called {\bf coboundary} if there exists  $r \in\mathcal{A} \otimes \mathcal{A}$ such that Eq.(\ref{eq:cop}) holds.
 \end{defi}

 \begin{rmk}[\cite{BCHM}]\label{rmk:fm} Let $(\mathcal{A},[~,~])$ be a mock-Lie algebra and $r=\sum\limits_{i} a_i\o b_i \in\mathcal{A}\otimes \mathcal{A}$. Then the map $\D: \mathcal{A}\longrightarrow \mathcal{A} \otimes \mathcal{A}$ defined by Eq.(\ref{eq:cop}) induces a mock-Lie algebra structure on $\mathcal{A}^*$ such that $(\mathcal{A},[~,~],\D)$ is a mock-Lie bialgebra if and only if, for all $x\in \mathcal{A}$,
 \begin{eqnarray}
 &(ad(x)\otimes \id-\id\otimes ad(x))(r+\tau(r))=0,&\label{eq:jio}\\
 &(ad(x)\otimes \id\otimes \id+\id \otimes ad(x)\otimes \id+\id \otimes \id \otimes ad(x))([r_{12},r_{13}]
 +[r_{13},r_{23}]-[r_{12},r_{23}])=0,\quad&\label{eq:jkp}
 \end{eqnarray}
 where $[r_{12},r_{13}]=\sum\limits_{i,j}[a_i , a_j]\otimes b_i \otimes b_j$, $[r_{13},r_{23}]=\sum\limits_{i,j} a_i \otimes a_j\otimes[ b_i , b_j]$, $[r_{12},r_{23}]=\sum\limits_{i,j} a_i \otimes [b_i, a_j] \otimes b_j$.
\end{rmk}

 Suppose that $(\mathcal{A},[~,~],N)$ is an $S$-adjoint-admissible Nijenhuis mock-Lie algebra. To make  $(\mathcal{A},[~,~],\D,N,S)$  a Nijenhuis mock-Lie bialgebra, we only need that $({\mathcal{A}}^*,\D^*,S^*)$ is an $N^*$-adjoint-admissible Nijenhuis mock-Lie algebra, that is, $(\mathcal{A},\D,S)$ is a Nijenhuis Lie coalgebra and Eq.(\ref{eq:jy}) holds.

 \begin{pro}\label{pro:nh} Let $(\mathcal{A},[~,~],N)$ be an $S$-adjoint-admissible Nijenhuis mock-Lie algebra and $r\in \mathcal{A} \otimes \mathcal{A}$. Define a linear map $\D: \mathcal{A}\longrightarrow \mathcal{A} \otimes \mathcal{A}$  by Eq.(\ref{eq:cop}). Then the following assertions hold.
 \begin{enumerate}[(1)]
 \item \label{it:0} Eq.(\ref{eq:gd}) holds if and only if, for all $x\in \mathcal{A}$,
 \begin{eqnarray}\label{eq:mg}
 &&(\id \otimes ad(S(x))-\id \otimes S\circ ad(x))(S\otimes\id -\id \otimes N)(r)\nonumber\\
 &&+(ad(S(x))\otimes \id-S\circ ad(x)\otimes \id)(N\otimes \id-\id \otimes S)(r)=0.
 \end{eqnarray}
 \item  \label{it:7} Eq.(\ref{eq:jy}) holds if and only if, for all $x\in \mathcal{A}$,
 \begin{eqnarray}\label{eq:jkw}
 &&(S\circ ad(x)\otimes \id+ad(N(x))\otimes \id-\id \otimes ad(N(x)) +\id \otimes N\circ ad(x))(S\otimes \id-\id \otimes N)(r)\nonumber\\
 &&+(ad(x)\o N^2-ad(x)\circ S^2 \otimes \id)(r)=0.
 \end{eqnarray}
 \end{enumerate}
 \end{pro}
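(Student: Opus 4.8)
The plan is to prove both assertions by the same mechanism: substitute the explicit formula \eqref{eq:cop} for $\D$ into the coalgebra-level identities \eqref{eq:gd} and \eqref{eq:jy}, expand everything in terms of the bracket on $\mathcal{A}$, and then reorganize the resulting tensor expressions into the compact operator form appearing on the right-hand sides. Throughout I would write $r = r^1\o r^2 = \br^1\o \br^2$ for independent copies and use the representation identity \eqref{eq:kp} for $ad$ (that is, $ad([x,y]) = -ad(x)ad(y)-ad(y)ad(x)$) together with the $S$-adjoint-admissibility hypothesis \eqref{eq:sfh} freely, since these are exactly the structural constraints available.

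For part \ref{it:0}: first I would compute $\D(S(x))$, $\D(S^2(x))$, and the ``slots'' $S(x)_{(1)}\o S(x)_{(2)}$ directly from \eqref{eq:cop}, noting that $\D(S(x)) = [S(x),r^1]\o r^2 - r^1\o[S(x),r^2]$, i.e. $\D S = (ad(S(x))\o\id)(r) - (\id\o ad(S(x)))(r)$ after a harmless index juggle. Then each of the four terms in \eqref{eq:gd} becomes a sum of two tensors built from $r$ by applying $S$ on one leg and an adjoint action on one leg; collecting the eight resulting tensors and pairing the ones sharing the same ``action slot'' should telescope, via \eqref{eq:sfh}, into precisely the two grouped operators $(\id\o ad(S(x)) - \id\o S\circ ad(x))(S\o\id - \id\o N)(r)$ and $(ad(S(x))\o\id - S\circ ad(x)\o\id)(N\o\id - \id\o S)(r)$. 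The key point making the two sides equivalent (rather than merely one implying the other) is that $r$ is an arbitrary element, so the tensor identity for all $x$ is genuinely equivalent to the operator identity for all $x$; no cancellation is lost.

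For part \ref{it:7}: this is the same computation applied to \eqref{eq:jy}, $(S\o\id)\D N + (\id\o N^2)\D = (S\o N)\D + (\id\o N)\D N$. I would rewrite $\D N(x) = (ad(N(x))\o\id)(r) - (\id\o ad(N(x)))(r)$ and $\D(x) = (ad(x)\o\id)(r) - (\id\o ad(x))(r)$, apply the operators $S\o\id$, $\id\o N^2$, $S\o N$, $\id\o N$ to the appropriate copies, and move everything to one side. Grouping terms by which leg carries the ``$r$ with $S$ or $N$ applied'' factor $(S\o\id - \id\o N)(r)$ and which carries $(ad(x)\o N^2 - ad(x)\circ S^2\o\id)(r)$ — again using \eqref{eq:kp}/\eqref{eq:sfh} to merge adjoint-action compositions — should yield exactly \eqref{eq:jkw}.

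The main obstacle is purely bookkeeping: keeping the tensor-factor positions straight through repeated use of cocommutativity \eqref{eq:cml-1} of $\D_r$ (which holds thanks to antisymmetry of $r$, cf. Remark \ref{rmk:fm}, \eqref{eq:jio}) and making sure that when two adjoint actions collide on the same leg they are combined via \eqref{eq:kp} with the correct sign. I expect no conceptual difficulty beyond disciplined index tracking, so in the write-up I would present the expansion of $\D S$ and $\D N$ explicitly, then state that substituting and regrouping — using \eqref{eq:sfh} and \eqref{eq:kp} — gives the claimed equivalences, displaying one of the two groupings in full and leaving the mirror-image one to the reader.
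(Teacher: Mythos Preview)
Your overall strategy---substitute \eqref{eq:cop} into \eqref{eq:gd} and \eqref{eq:jy} and regroup---is exactly what the paper does, but several of the tools you list are the wrong ones, and one essential ingredient is missing.

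First, you never encounter compositions of adjoint actions: each term in $\D(x)$, $\D(S(x))$, $\D(S^2(x))$, $\D(N(x))$ carries a single bracket, and the outer maps $S\o\id$, $\id\o N$, etc.\ introduce no further brackets. So \eqref{eq:kp} plays no role here. Likewise, $r$ is \emph{not} assumed antisymmetric in this proposition, and cocommutativity of $\D_r$ is neither assumed nor used (nor is \eqref{eq:jio} among the hypotheses); drop that from your plan. What actually drives part~\ref{it:0} is just commutativity of the bracket \eqref{eq:1-1} together with the $S$-adjoint-admissibility \eqref{eq:sfh}: applying \eqref{eq:sfh} with the roles $x\to a_i$, $y\to x$ converts the combination $[S^2(x),a_i]\o b_i - S[S(x),a_i]\o b_i$ (and its mirror on the second leg) into expressions involving $N(a_i)$, $N(b_i)$. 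This is the only way $N$ enters \eqref{eq:mg}, so your outline should make that substitution explicit rather than invoking \eqref{eq:kp}.

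The genuine gap is in part~\ref{it:7}: you omit the Nijenhuis identity \eqref{eq:bja}. When you expand $(\id\o N^2)\D(x) - (\id\o N)\D N(x)$ you meet $-a_i\o\big(N^2[x,b_i] - N[N(x),b_i]\big)$, and only \eqref{eq:bja} rewrites this as $-a_i\o\big(N[x,N(b_i)] - [N(x),N(b_i)]\big)$, which then feeds into the $(S\o\id - \id\o N)(r)$ grouping in \eqref{eq:jkw}. The paper uses \eqref{eq:bja} and \eqref{eq:sfh} (the latter on the first-leg terms, producing the $ad(x)\circ S^2\o\id$ piece) in tandem; without \eqref{eq:bja} the second-leg terms will not close up and the equivalence fails.
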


 \begin{proof} \ref{it:0} For all $r=\sum a_i\otimes b_i $ and $x\in \mathcal{A}$, we have
 \begin{eqnarray*}
 &&\hspace{-20mm}(S \otimes S)\D(x)+\D(S^2 (x))-(S \otimes \id)\D(S(x))-(\id \otimes S)\D(S(x))\\
 &\stackrel {(\ref{eq:cop})(\ref{eq:1-1})(\ref{eq:sfh})}{=}&(S \otimes S)([x,a_i]\otimes b_i)-(S \otimes S)(a_i \otimes [x,b_i])+(S\otimes \id)(a_i\otimes [S(x),b_i])\\
 &&-(\id \otimes S)([S(x),a_i]\otimes b_i)+([N(a_i),S(x)]-[S(N(a_i)),x])\otimes b_i\\
 &&+a_i\otimes ([S(N(b_i)),x]-[N(b_i),S(x)])\\ 
 &=&\hspace{-6mm}(\id \otimes ad(S(x))-\id \otimes S\circ ad(x))(S\otimes \id -\id \otimes N)(r)+(ad(S(x))\otimes \id-S\circ ad(x)\otimes \id)\\
 &&(N\otimes \id-\id\otimes S)(r).
 \end{eqnarray*}
 Then, Eq.(\ref{eq:gd}) holds if and only if Eq.(\ref{eq:mg}) holds.

 \ref{it:7} Similarly, we have
 \begin{eqnarray*}
 &&\hspace{-20mm}(S \otimes \id)\D N(x)+(\id \otimes N^2)\D(x)-(S\otimes N)\D(x)-(\id\otimes N)\D N(x)\\
 &\stackrel {(\ref{eq:cop})(\ref{eq:bja})(\ref{eq:sfh})}{=}&(S([x,S(a_i)])+[N(x),S(a_i)]-[x,S^2 (a_i)])\otimes b_i-S(a_i)\otimes [N(x),b_i]+[x,a_i]\otimes N^2(b_i)\\
 &&+a_i\otimes ([N(x),N(b_i)]-N([x,N(b_i)]))-S([x,a_i])\otimes N(b_i)+S(a_i)\otimes N([x,b_i])\\
 &&-[N(x),a_i]\otimes N(b_i)\\ 
 &=&\hspace{-6mm}(S\circ ad(x)\otimes \id+ad(N(x))\otimes \id-\id \otimes ad(N(x)) +\id \otimes N\circ ad(x))(S\otimes \id-\id \otimes N)(r)\\
 &&+(ad(x)\o N^2-ad(x)\circ S^2 \otimes \id)(r).
 \end{eqnarray*} 
 Then, Eq.(\ref{eq:jy}) holds if and only if Eq.(\ref{eq:jkw}) holds.
 \end{proof}

 \begin{rmk}\label{rmk:q}
 If $(S\otimes\id-\id\otimes N)(r)=0$, then $(ad(x)\o N^2-ad(x)\circ S^2 \otimes\id)(r)=0$.
 \end{rmk} 

 \begin{thm}\label{thm:ggg} Let $(\mathcal{A},[~,~],N)$ be an $S$-adjoint-admissible Nijenhuis mock-Lie algebra and $r\in \mathcal{A} \otimes \mathcal{A}$. Define a linear map $\D$ by Eq.(\ref{eq:cop}). Then $(\mathcal{A},[~,~],\D,N,S)$ is a Nijenhuis mock-Lie bialgebra if and only if Eqs.(\ref{eq:jio})-(\ref{eq:jkw}) hold.
 \end{thm}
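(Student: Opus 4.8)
The plan is to unwind Definition~\ref{de:yy} and verify its five defining conditions one at a time, feeding in the hypotheses together with the preparatory Remark~\ref{rmk:fm} and Proposition~\ref{pro:nh}. Since $(\mathcal{A},[~,~],N)$ is assumed to be a Nijenhuis mock-Lie algebra, condition~(2) of Definition~\ref{de:yy} is automatic; and since $(\mathcal{A},[~,~],N)$ is $S$-adjoint-admissible, condition~(4), i.e.\ Eq.~\eqref{eq:sfh}, holds by hypothesis. Thus the whole statement reduces to proving that conditions~(1), (3) and (5) of Definition~\ref{de:yy} are jointly equivalent to the conjunction of Eqs.~\eqref{eq:jio}, \eqref{eq:jkp}, \eqref{eq:mg}, \eqref{eq:jkw}.

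For condition~(1) I would quote Remark~\ref{rmk:fm}: with $\D=\D_r$ given by Eq.~\eqref{eq:cop}, the triple $(\mathcal{A},[~,~],\D)$ is a mock-Lie bialgebra if and only if Eqs.~\eqref{eq:jio} and \eqref{eq:jkp} hold, and in particular this forces $(\mathcal{A},\D)$ to be a mock-Lie coalgebra. Granting that, condition~(3)---that $(\mathcal{A},\D,S)$ be a Nijenhuis mock-Lie coalgebra---reduces to the single extra demand that $S$ be a Nijenhuis operator on the coalgebra $(\mathcal{A},\D)$, i.e.\ that Eq.~\eqref{eq:gd} hold; by part~(1) of Proposition~\ref{pro:nh} this is equivalent to Eq.~\eqref{eq:mg}. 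Finally, recalling from the paragraph preceding Definition~\ref{de:yy} that $(\mathcal{A},\D,S)$ being a Nijenhuis mock-Lie coalgebra is the same as $(\mathcal{A}^*,\D^*,S^*)$ being a Nijenhuis mock-Lie algebra, condition~(5)---that $N^*$ be adjoint-admissible to $(\mathcal{A}^*,\D^*,S^*)$---is exactly Eq.~\eqref{eq:jy}, which by part~(2) of Proposition~\ref{pro:nh} is equivalent to Eq.~\eqref{eq:jkw}.

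Assembling the equivalences gives both directions. If $(\mathcal{A},[~,~],\D,N,S)$ is a Nijenhuis mock-Lie bialgebra then conditions~(1), (3), (5) hold, hence Eqs.~\eqref{eq:jio} and \eqref{eq:jkp} (from~(1) via Remark~\ref{rmk:fm}), Eq.~\eqref{eq:mg} (from~(3) via Proposition~\ref{pro:nh}), and Eq.~\eqref{eq:jkw} (from~(5) via Proposition~\ref{pro:nh}). Conversely, Eqs.~\eqref{eq:jio} and \eqref{eq:jkp} make $(\mathcal{A},[~,~],\D)$ a mock-Lie bialgebra and supply the coalgebra axioms for $(\mathcal{A},\D)$; Eq.~\eqref{eq:mg} then promotes $(\mathcal{A},\D,S)$ to a Nijenhuis mock-Lie coalgebra (condition~(3)); the hypothesis already gives Eq.~\eqref{eq:sfh} (condition~(4)); and Eq.~\eqref{eq:jkw}, via its equivalence with Eq.~\eqref{eq:jy}, gives condition~(5). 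All five conditions of Definition~\ref{de:yy} then hold, so $(\mathcal{A},[~,~],\D,N,S)$ is a Nijenhuis mock-Lie bialgebra.

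I do not anticipate a genuine obstacle: the result is a bookkeeping assembly of Remark~\ref{rmk:fm} and the two parts of Proposition~\ref{pro:nh}, with the $S$-adjoint-admissibility hypothesis absorbing conditions~(2) and (4) of Definition~\ref{de:yy}. The only point that calls for care is the logical order---one should not invoke ``$S$ is a Nijenhuis operator on $(\mathcal{A},\D)$'' or the dual Nijenhuis algebra $(\mathcal{A}^*,\D^*,S^*)$ before the mock-Lie coalgebra axioms \eqref{eq:cml-1}--\eqref{eq:cml} are available, so in the ``if'' direction I would first extract those from Eqs.~\eqref{eq:jio}--\eqref{eq:jkp} and only then appeal to conditions~(3) and (5).
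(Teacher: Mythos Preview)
Your proposal is correct and follows essentially the same route as the paper: reduce Definition~\ref{de:yy} to conditions (1), (3), (5) using the hypotheses, then invoke Remark~\ref{rmk:fm} for (1) and the two parts of Proposition~\ref{pro:nh} for (3) and (5). The paper's own proof is terser---it bundles everything into the claim that the remaining content is ``$(\mathcal{A}^*,\D^*,S^*)$ is an $N^*$-adjoint-admissible Nijenhuis mock-Lie algebra'' and cites only Proposition~\ref{pro:nh}---so your version, which separates out the role of Remark~\ref{rmk:fm} for Eqs.~\eqref{eq:jio}--\eqref{eq:jkp}, is in fact more careful.
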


 \begin{proof} The tuple $(\mathcal{A},[~,~],\D,N,S)$  defines a Nijenhuis mock-Lie bialgebra if and only if $(\mathcal{A}^*,\D^*,S^*)$ is an $N^*$-adjoint-admissible Nijenhuis mock-Lie algebra. By Proposition \mref{pro:nh}, the latter holds if and only if Eqs.(\ref{eq:jio})-(\ref{eq:jkw}) hold.
 \end{proof}

 \begin{cor}\label{cor:gm} Let $(\mathcal{A},[~,~],N)$ be an $S$-adjoint-admissible Nijenhuis mock-Lie algebra and $r \in \mathcal{A}\otimes \mathcal{A}$. Define a linear map $\D: \mathcal{A}\longrightarrow \mathcal{A}\otimes \mathcal{A}$ by Eq.(\ref{eq:cop}). Then $(\mathcal{A},[~,~],\D,N,S)$ is a Nijenhuis mock-Lie bialgebra if Eq.(\ref{eq:jio}) and the following equations hold:
 \begin{eqnarray}
 &[r_{12},r_{13}]+[r_{13},r_{23}]-[r_{12},r_{23}]=0,&\label{eq:uj}\\
 &(N\otimes \id-\id\otimes S)(r)=0,&\label{eq:cg}\\
 &(S\otimes \id-\id\otimes N)(r)=0.&\label{eq:ma}
 \end{eqnarray}
 \end{cor}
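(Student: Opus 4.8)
The plan is to show that the three displayed conditions \eqref{eq:uj}, \eqref{eq:cg}, \eqref{eq:ma} together with \eqref{eq:jio} imply all of \eqref{eq:jio}--\eqref{eq:jkw}, so that Theorem~\ref{thm:ggg} applies directly. Thus I would argue that Eqs.~\eqref{eq:uj}, \eqref{eq:cg}, \eqref{eq:ma} imply Eqs.~\eqref{eq:jkp}, \eqref{eq:mg}, \eqref{eq:jkw}, the remaining hypotheses of Theorem~\ref{thm:ggg}. The first implication is immediate: Eq.~\eqref{eq:jkp} is obtained from Eq.~\eqref{eq:uj} by applying the operator $ad(x)\otimes\id\otimes\id+\id\otimes ad(x)\otimes\id+\id\otimes\id\otimes ad(x)$ to both sides, since this operator annihilates $0$.

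Next I would handle Eq.~\eqref{eq:mg}. Looking at its left-hand side, it is a sum of two terms: one of the form $(\id\otimes\Phi)(S\otimes\id-\id\otimes N)(r)$ and one of the form $(\Psi\otimes\id)(N\otimes\id-\id\otimes S)(r)$, where $\Phi=ad(S(x))-S\circ ad(x)$ and $\Psi=ad(S(x))-S\circ ad(x)$ act on the appropriate tensor leg. By Eq.~\eqref{eq:ma} the first factor $(S\otimes\id-\id\otimes N)(r)$ already vanishes, and by Eq.~\eqref{eq:cg} the factor $(N\otimes\id-\id\otimes S)(r)$ vanishes as well; hence both summands are zero and Eq.~\eqref{eq:mg} holds. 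The same mechanism disposes of Eq.~\eqref{eq:jkw}: its left-hand side is $(\,\text{operator}\,)(S\otimes\id-\id\otimes N)(r)+(ad(x)\otimes N^2-ad(x)\circ S^2\otimes\id)(r)$, the first summand vanishes by Eq.~\eqref{eq:ma}, and the second summand vanishes by Remark~\ref{rmk:q}, which says precisely that $(S\otimes\id-\id\otimes N)(r)=0$ forces $(ad(x)\otimes N^2-ad(x)\circ S^2\otimes\id)(r)=0$.

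Having verified Eqs.~\eqref{eq:jio}, \eqref{eq:jkp}, \eqref{eq:mg}, \eqref{eq:jkw}, I would then invoke Theorem~\ref{thm:ggg} to conclude that $(\mathcal{A},[~,~],\D,N,S)$ is a Nijenhuis mock-Lie bialgebra, completing the proof. The only mild subtlety — and the step I would be most careful about — is the bookkeeping in identifying the two summands of \eqref{eq:mg} (and of \eqref{eq:jkw}) as being built from the expressions $(S\otimes\id-\id\otimes N)(r)$ and $(N\otimes\id-\id\otimes S)(r)$ acting on the correct tensor factor, together with checking that Remark~\ref{rmk:q} really delivers the vanishing of the extra $ad(x)\otimes N^2-ad(x)\circ S^2\otimes\id$ term; once those identifications are made explicit the argument is just ``apply a linear operator to zero.''
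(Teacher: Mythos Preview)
Your proposal is correct and is exactly the argument the paper intends: the corollary is stated without proof because it follows immediately from Theorem~\ref{thm:ggg} once one observes that Eqs.~(\ref{eq:uj}), (\ref{eq:cg}), (\ref{eq:ma}) force Eqs.~(\ref{eq:jkp}), (\ref{eq:mg}), (\ref{eq:jkw}) to hold, with Remark~\ref{rmk:q} supplying the vanishing of the residual term in (\ref{eq:jkw}). Your write-up simply makes this explicit.
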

 We note that if $r$ is anti-symmetric in the sense of $r=-\tau(r)$, then Eq.(\ref{eq:cg}) holds if and only if Eq.(\ref{eq:ma}) holds.

 \begin{defi}\label{de:nj} Let $(\mathcal{A},[~,~],N)$ be a Nijenhuis mock-Lie algebra. Suppose that $r\in \mathcal{A}\otimes \mathcal{A}$ and $S:\mathcal{A}\longrightarrow \mathcal{A}$ is a linear map. Then,  Eq.(\ref{eq:uj}) together with Eqs.(\ref{eq:cg}) and $(\ref{eq:ma})$  is called an {\bf $S$-admissible mock-Lie-Yang-Baxter equation} (abbr. mLYBe) in $(\mathcal{A},[~,~],N)$.
 \end{defi}

 \begin{pro}\label{pro:gth} Let $(\mathcal{A},[~,~],N)$  be an $S$-adjoint-admissible Nijenhuis mock-Lie algebra and $r\in \mathcal{A}\otimes \mathcal{A}$ be an anti-symmetric solution of the $S$-admissible mLYBe in $(\mathcal{A},[~,~],N)$. Then $(\mathcal{A},[~,~],\D,N,S)$ is a coboundary Nijenhuis mock-Lie bialgebra, where the linear map $\D=\D_r$ is defined by Eq.(\ref{eq:cop}).
 \end{pro}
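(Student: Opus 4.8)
The plan is to derive this immediately from Corollary~\ref{cor:gm}. Recall that Corollary~\ref{cor:gm} says that, for an $S$-adjoint-admissible Nijenhuis mock-Lie algebra $(\mathcal{A},[~,~],N)$ and $\D=\D_r$ defined by Eq.(\ref{eq:cop}), the quintuple $(\mathcal{A},[~,~],\D,N,S)$ is a Nijenhuis mock-Lie bialgebra provided Eq.(\ref{eq:jio}) together with Eqs.(\ref{eq:uj}), (\ref{eq:cg}) and (\ref{eq:ma}) hold. So the only thing to check is that all four of these hypotheses are available here.

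First I would observe that the antisymmetry $r=-\tau(r)$ gives $r+\tau(r)=0$, whence $(ad(x)\otimes\id-\id\otimes ad(x))(r+\tau(r))=0$ for every $x\in\mathcal{A}$; thus Eq.(\ref{eq:jio}) holds automatically. Next, by hypothesis $r$ is an antisymmetric solution of the $S$-admissible mLYBe in $(\mathcal{A},[~,~],N)$, which by Definition~\ref{de:nj} means precisely that Eqs.(\ref{eq:uj}), (\ref{eq:cg}) and (\ref{eq:ma}) all hold (and, as noted right after Corollary~\ref{cor:gm}, antisymmetry of $r$ renders Eqs.(\ref{eq:cg}) and (\ref{eq:ma}) equivalent, so in effect only Eq.(\ref{eq:uj}) and one of the two compatibility relations are needed). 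Since $(\mathcal{A},[~,~],N)$ is assumed $S$-adjoint-admissible and $\D=\D_r$ is given by Eq.(\ref{eq:cop}), Corollary~\ref{cor:gm} applies and shows that $(\mathcal{A},[~,~],\D,N,S)$ is a Nijenhuis mock-Lie bialgebra. Finally, because $\D$ is of the form Eq.(\ref{eq:cop}) with respect to the given $r\in\mathcal{A}\otimes\mathcal{A}$, Definition~\ref{de:bh} identifies this bialgebra as \emph{coboundary}, which is the claim.

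I do not expect any genuine obstacle in this argument, since the substantive work is already packaged in Proposition~\ref{pro:nh}, Remark~\ref{rmk:q} and Corollary~\ref{cor:gm}: there, antisymmetry and Eqs.(\ref{eq:uj})--(\ref{eq:ma}) are shown to force the Nijenhuis mock-Lie coalgebra condition Eq.(\ref{eq:mg}), the ``invariance'' condition Eq.(\ref{eq:jkp}), and the $N^{*}$-adjoint-admissibility condition Eq.(\ref{eq:jkw}). If one preferred a proof that does not cite Corollary~\ref{cor:gm}, the single step to carry out would be to re-run those tensor computations (of the type in the proofs of Proposition~\ref{pro:nh} and Theorem~\ref{thm:ggg}) directly; this is routine but a little lengthy, so using Corollary~\ref{cor:gm} is the efficient route.
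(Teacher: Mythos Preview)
Your proposal is correct and matches the paper's own proof, which simply invokes Corollary~\ref{cor:gm} and Definition~\ref{de:nj}. Your additional remark that antisymmetry of $r$ forces Eq.(\ref{eq:jio}) and that Definition~\ref{de:bh} then yields the coboundary label makes the one-line justification fully explicit.
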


 \begin{proof} It is obvious by Corollary \mref{cor:gm} and Definition \mref{de:nj}.
 \end{proof}

 \begin{ex}\label{ex:ng-1} Let $(\mathcal{A},[~,~],\D,N,S)$ be the Nijenhuis mock-Lie bialgebra given in Example \ref{ex:ng}. Then $(\mathcal{A},[~,~],\D,N,S)$ is coboundary with $r= e_2 \o e_3- e_3 \o e_2$.  
 \end{ex}

\subsection{$O$-operators on Nijenhuis mock-Lie algebras}

 For a  vector space $V$, the isomorphism $V\otimes V \cong Hom(V^*, V)$ identifies an $r\in V\otimes V$ with a linear map $T_r: V^*\longrightarrow V$. Thus for $r=\sum\limits_{i} u_i\otimes v_i$, the corresponding map $T_r$ is
 \begin{eqnarray}\label{eq:nm}
 T_r(u^*)=\sum\limits_{i} \langle u^*, u_i\rangle v_i,\quad \forall u^*\in V^*.
 \end{eqnarray} 

 \begin{pro}[\cite{BCHM}]\label{pro:gxc} Let $(\mathcal{A},[~,~])$  be a mock-Lie algebra and $r \in \mathcal{A}\otimes \mathcal{A}$ be anti-symmetric. Then $r$ is a solution of the mLYBe in $(\mathcal{A},[~,~])$ if and only if $T_r$ satisfies
 \begin{eqnarray*}
 [T_r(x^*),T_r(y^*)]=T_r(ad^*(T_r(x^*))y^*+ad^*(T_r(y^*))x^*), \forall x^*, y^*\in \mathcal{A}^*.
 \end{eqnarray*}
 \end{pro}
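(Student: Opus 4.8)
The plan is to turn the tensor form of the mLYBe into the claimed operator identity by contracting two of its three tensor legs against arbitrary functionals. Write $r=\sum_i u_i\o v_i$ as in Eq.(\mref{eq:nm}), so that $T_r(\xi)=\sum_i\langle\xi,u_i\rangle v_i$, and recall from Remark \mref{rmk:fm} (with $a_i,b_i$ there replaced by $u_i,v_i$) that $r$ being a solution of the mLYBe in $(\mathcal{A},[~,~])$ means $[r_{12},r_{13}]+[r_{13},r_{23}]-[r_{12},r_{23}]=0$, where $[r_{12},r_{13}]=\sum_{i,j}[u_i,u_j]\o v_i\o v_j$, $[r_{13},r_{23}]=\sum_{i,j}u_i\o u_j\o[v_i,v_j]$ and $[r_{12},r_{23}]=\sum_{i,j}u_i\o[v_i,u_j]\o v_j$. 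First I would fix the dictionary: from Eq.(\mref{eq:sr}) applied to $\rho=ad$ together with commutativity Eq.(\mref{eq:1-1}), one has $\langle ad^*(x)\xi,w\rangle=\langle\xi,[x,w]\rangle$ for all $x,w\in\mathcal{A}$, $\xi\in\mathcal{A}^*$; and from the anti-symmetry $r=-\tau(r)$ one gets $\sum_i\langle\eta,v_i\rangle u_i=-T_r(\eta)$.

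The core step is to apply $\xi\o\eta\o\id$ to each of the three terms above, for arbitrary $\xi,\eta\in\mathcal{A}^*$. Using only the definitions of $T_r$ and $ad^*$ one finds $(\xi\o\eta\o\id)[r_{13},r_{23}]=[T_r(\xi),T_r(\eta)]$ and $(\xi\o\eta\o\id)[r_{12},r_{23}]=T_r(ad^*(T_r(\xi))\eta)$, with no use of anti-symmetry, while inserting the anti-symmetry of $r$ in the single place it is needed — via $\sum_i\langle\eta,v_i\rangle[u_i,u_j]=[-T_r(\eta),u_j]$ — gives $(\xi\o\eta\o\id)[r_{12},r_{13}]=-T_r(ad^*(T_r(\eta))\xi)$. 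Substituting these into the mLYBe yields, for all $\xi,\eta\in\mathcal{A}^*$,
\[
[T_r(\xi),T_r(\eta)]=T_r\bigl(ad^*(T_r(\xi))\eta+ad^*(T_r(\eta))\xi\bigr),
\]
which is exactly the stated identity. Conversely, if this holds for all $\xi,\eta$, then $\xi\o\eta\o\id$ annihilates the tensor $[r_{12},r_{13}]+[r_{13},r_{23}]-[r_{12},r_{23}]$ for all $\xi,\eta$; since that tensor lies in $W\o W\o W$ for the finite-dimensional subspace $W$ of $\mathcal{A}$ spanned by the finitely many components of $r$ and their brackets, and a tensor in $W^{\o 3}$ is determined by its contractions against two functionals in two of its legs, it must vanish, so $r$ solves the mLYBe.

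The only genuine difficulty is the bookkeeping: keeping the two summation indices and the three tensor slots straight, and applying the anti-symmetry of $r$ in exactly one spot so that the three terms of the tensor equation correspond one-to-one, with the correct signs, to the three terms of the operator equation. Once the dictionary above is fixed, each of the three contractions is a one-line substitution. Since the statement is quoted from \cite{BCHM}, one may of course also simply invoke that reference.
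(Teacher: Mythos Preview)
Your argument is correct. The contraction computation is carried out cleanly, the single use of anti-symmetry is placed where it is needed, and the converse via the finite-dimensional subspace $W$ is the right way to pass back from the operator identity to the tensor equation.

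As for comparison: the paper does not supply a proof at all. Proposition~\ref{pro:gxc} is simply quoted from \cite{BCHM}, with no argument given; your closing remark already anticipates this. So your write-up is strictly more than what the paper offers, and is exactly the standard proof one would expect from the cited source.
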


 \begin{thm}\label{thm:x} Let $(\mathcal{A},[~,~],N)$ be a Nijenhuis mock-Lie algebra and $r\in \mathcal{A}\otimes \mathcal{A}$ be anti-symmetric. Let $S: \mathcal{A}\longrightarrow \mathcal{A}$ be a linear map. Then $r$ is a solution of the $S$-admissible mLYBe in $(\mathcal{A},[~,~],N)$ if and only if $T_r$ satisfies
 \begin{eqnarray}
 &[T_r(x^*),T_r(y^*)]=T_r(ad^*(T_r(x^*))y^*+ad^*(T_r(y^*))x^*), \forall x^*,y^*\in \mathcal{A}^*,&\label{eq:lq}\\
 &N\circ T_r=T_r\circ S^*.&\label{eq:lu}
 \end{eqnarray}
 \end{thm}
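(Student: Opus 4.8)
The plan is to decouple the $S$-admissible mLYBe into a ``Yang--Baxter part'' and a ``compatibility part'' and to translate each into a condition on $T_r$. By Definition~\ref{de:nj}, the $S$-admissible mLYBe in $(\mathcal{A},[~,~],N)$ is the conjunction of Eq.~(\ref{eq:uj}), Eq.~(\ref{eq:cg}) and Eq.~(\ref{eq:ma}); since $r$ is anti-symmetric, Eqs.~(\ref{eq:cg}) and (\ref{eq:ma}) are equivalent (as noted right after Corollary~\ref{cor:gm}), so it suffices to prove separately that Eq.~(\ref{eq:uj}) $\iff$ Eq.~(\ref{eq:lq}) and that Eq.~(\ref{eq:ma}) $\iff$ Eq.~(\ref{eq:lu}).

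For the first equivalence I would simply invoke Proposition~\ref{pro:gxc}: Eq.~(\ref{eq:uj}) is exactly ``the mLYBe'' of that proposition applied to the anti-symmetric element $r$, so Eq.~(\ref{eq:uj}) holds if and only if $T_r$ satisfies Eq.~(\ref{eq:lq}). No new computation is needed for this part.

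For the second equivalence I would write $r=\sum_i u_i\otimes v_i$, use the identification~(\ref{eq:nm}) between $r$ and $T_r$, and apply the partial pairing $(\langle u^*,-\rangle\otimes\id)$ to the element $(S\otimes\id-\id\otimes N)(r)\in\mathcal{A}\otimes\mathcal{A}$. Unwinding the definitions of $T_r$ and of $S^*$ should give, for every $u^*\in\mathcal{A}^*$, the identity $(\langle u^*,-\rangle\otimes\id)\big((S\otimes\id-\id\otimes N)(r)\big)=T_r(S^*(u^*))-N(T_r(u^*))$. Hence Eq.~(\ref{eq:ma}) forces $N\circ T_r=T_r\circ S^*$, which is Eq.~(\ref{eq:lu}). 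Conversely, if Eq.~(\ref{eq:lu}) holds, then this partial pairing vanishes for every $u^*$; writing $(S\otimes\id-\id\otimes N)(r)=\sum_j a_j\otimes b_j$ with the $b_j$ linearly independent, the vanishing of $\sum_j\langle u^*,a_j\rangle b_j$ for all $u^*$ forces each $a_j=0$, and therefore $(S\otimes\id-\id\otimes N)(r)=0$, i.e., Eq.~(\ref{eq:ma}).

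Combining the two equivalences finishes the argument: $r$ is an anti-symmetric solution of the $S$-admissible mLYBe in $(\mathcal{A},[~,~],N)$ if and only if Eqs.~(\ref{eq:uj}) and (\ref{eq:ma}) hold, if and only if Eqs.~(\ref{eq:lq}) and (\ref{eq:lu}) hold. I do not expect any serious obstacle here, since everything is a direct unwinding of definitions together with the already-established Proposition~\ref{pro:gxc}; the only step meriting a moment's care is the converse just given, where one recovers a tensor identity in $\mathcal{A}\otimes\mathcal{A}$ from an identity of linear maps on all of $\mathcal{A}^*$ --- this uses only that $r$ is a finite sum and the injectivity of $w\mapsto\big(u^*\mapsto(\langle u^*,-\rangle\otimes\id)(w)\big)$ from $\mathcal{A}\otimes\mathcal{A}$ into $Hom(\mathcal{A}^*,\mathcal{A})$.
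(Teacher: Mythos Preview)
Your proposal is correct and follows essentially the same approach as the paper: invoke Proposition~\ref{pro:gxc} for the equivalence Eq.~(\ref{eq:uj}) $\Leftrightarrow$ Eq.~(\ref{eq:lq}), and then compute $T_r(S^*(x^*))=\sum_i\langle x^*,S(a_i)\rangle b_i$ and $N(T_r(x^*))=\sum_i\langle x^*,a_i\rangle N(b_i)$ directly to obtain Eq.~(\ref{eq:ma}) $\Leftrightarrow$ Eq.~(\ref{eq:lu}). Your converse step is slightly more explicit than the paper's, which simply uses the identification $\mathcal{A}\otimes\mathcal{A}\cong Hom(\mathcal{A}^*,\mathcal{A})$ stated before Proposition~\ref{pro:gxc}.
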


 \begin{proof} By Proposition \mref{pro:gxc}, $r$ is a solution of the mLYBe in $(\mathcal{A},[~,~])$ if and only if Eq.(\ref{eq:lq}) holds. Moreover, let $r=\sum\limits_{i} a_i \otimes b_i$ and for all $x^* \in \mathcal{A}^*$, we have
 \begin{eqnarray*}
 T_r(S^*(x^*))\stackrel {(\ref{eq:nm})}{=}\sum\limits_{i}\langle S^*(x^*),a_i\rangle b_i=\sum_i\langle x^*,S(a_i)\rangle b_i,
 \end{eqnarray*}
 \begin{eqnarray*}
 N(T_r(x^*))\stackrel {(\ref{eq:nm})}{=}\sum\limits_{i}\langle x^*,a_i\rangle N(b_i).
 \end{eqnarray*}
 Hence, $N\circ T_r=T_r\circ S^*$ if and only if Eq.(\ref{eq:ma}) holds. Hence, the conclusion follows.
 \end{proof}

 According to Theorem \ref{thm:x}, we introduce the following notion of weak $O$-operator.
 
 \begin{defi}\label{de:hc} Let $(\mathcal{A},[~,~],N)$ be a Nijenhuis mock-Lie algebra, $(V,\rho)$ be a representation of $(\mathcal{A},[~,~])$ and $\alpha: V\longrightarrow V$ be a linear map. A linear map $T:V\longrightarrow \mathcal{A}$ is called a {\bf weak $O$-operator associated to $(V,\rho)$ and $\alpha$} if $T$ satisfies
 \begin{eqnarray}
 &[T(u),T(v)]=T(\rho(T(u))v+\rho(T(v))u), \forall u, v \in V,&\label{eq:lz}\\
 &N\circ T=T\circ \alpha.&\label{eq:lm}
 \end{eqnarray}
 If $(V,\rho,\alpha)$ is a representation of $(\mathcal{A},[~,~],N)$, then $T$ is called an {\bf $O$-operator associated to $(V,\rho,\alpha)$}.
 \end{defi}

Theorem \mref{thm:x} can be rewritten in terms of $O$-operators as follows.

 \begin{cor}\label{cor:bbo} Let $(\mathcal{A},[~,~],N)$ be a Nijenhuis mock-Lie algebra, $r\in \mathcal{A}\otimes \mathcal{A}$ be anti-symmetric and $S:\mathcal{A}\longrightarrow \mathcal{A}$ be a linear map. Then $r$ is a solution of the $S$-admissible mLYBe in $(\mathcal{A},[~,~],N)$ if and only if $T_r$ is a weak $O$-operator associated to $(\mathcal{A}^*,Ad^*)$ and $S^*$. In addition, if $(\mathcal{A},[~,~],N)$ is an $S$-adjoint-admissible Nijenhuis mock-Lie algebra, then $r$ is a solution of the $S$-admissible mLYBe in $(\mathcal{A},[~,~],N)$ if and only if $T_r$ is an $O$-operator associated to $(\mathcal{A}^*,Ad^*,S^*)$.
 \end{cor}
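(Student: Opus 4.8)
The plan is to obtain the corollary as a dictionary translation of Theorem~\ref{thm:x} into the language of (weak) $O$-operators from Definition~\ref{de:hc}. First I would record that $(\mathcal{A}^*, ad^*)$ is a representation of the mock-Lie algebra $(\mathcal{A},[~,~])$ --- this is the instance of Eq.~\eqref{eq:sr} applied to the adjoint representation $(\mathcal{A},ad)$ of Example~\ref{ex:pl}, and it is exactly the hypothesis appearing in Lemma~\ref{lem:HO} --- so that it is legitimate to speak of a weak $O$-operator $T:\mathcal{A}^*\longrightarrow\mathcal{A}$ associated to $(\mathcal{A}^*, ad^*)$ and the linear map $S^*$. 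By Definition~\ref{de:hc}, such a $T$ is precisely a linear map satisfying Eq.~\eqref{eq:lz} with $\rho=ad^*$, namely $[T(x^*),T(y^*)]=T(ad^*(T(x^*))y^*+ad^*(T(y^*))x^*)$ for all $x^*,y^*\in\mathcal{A}^*$, together with Eq.~\eqref{eq:lm} with $\alpha=S^*$, namely $N\circ T=T\circ S^*$.

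Next I would invoke Theorem~\ref{thm:x}: for anti-symmetric $r$, the associated map $T_r$ defined by Eq.~\eqref{eq:nm} is a solution of the $S$-admissible mLYBe in $(\mathcal{A},[~,~],N)$ if and only if it satisfies Eqs.~\eqref{eq:lq} and \eqref{eq:lu}. But Eq.~\eqref{eq:lq} is verbatim Eq.~\eqref{eq:lz} for $T=T_r$, $\rho=ad^*$, and Eq.~\eqref{eq:lu} is verbatim Eq.~\eqref{eq:lm} for $T=T_r$, $\alpha=S^*$. Hence ``$r$ is a solution of the $S$-admissible mLYBe'' and ``$T_r$ is a weak $O$-operator associated to $(\mathcal{A}^*, ad^*)$ and $S^*$'' are literally the same pair of conditions, which gives the first equivalence.

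For the ``in addition'' part I would add one further ingredient: the ``in particular'' clause of Lemma~\ref{lem:HO} says that if $(\mathcal{A},[~,~],N)$ is $S$-adjoint-admissible, i.e.\ Eq.~\eqref{eq:sfh} holds, then $(\mathcal{A}^*, ad^*, S^*)$ is a representation of the Nijenhuis mock-Lie algebra $(\mathcal{A},[~,~],N)$. By the last sentence of Definition~\ref{de:hc}, a weak $O$-operator associated to $(\mathcal{A}^*, ad^*)$ and $S^*$ is then exactly an $O$-operator associated to the representation $(\mathcal{A}^*, ad^*, S^*)$. Combining this with the first equivalence yields that, under $S$-adjoint-admissibility, $r$ solves the $S$-admissible mLYBe if and only if $T_r$ is an $O$-operator associated to $(\mathcal{A}^*, ad^*, S^*)$.

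There is no genuine difficulty here; the corollary is a reformulation of Theorem~\ref{thm:x}. The only points deserving a moment of care are bookkeeping ones: confirming, via the remark following Corollary~\ref{cor:gm}, that the anti-symmetry of $r$ is precisely what makes Eqs.~\eqref{eq:cg} and \eqref{eq:ma} equivalent, so that the $S$-admissible mLYBe of Definition~\ref{de:nj} reduces to the two conditions on $T_r$ isolated in Theorem~\ref{thm:x} (the mLYBe part handled by Proposition~\ref{pro:gxc}, the Nijenhuis part being $N\circ T_r=T_r\circ S^*$), and checking that $ad^*$ is indeed a mock-Lie representation so that ``weak $O$-operator associated to $(\mathcal{A}^*, ad^*)$'' is a well-defined notion.
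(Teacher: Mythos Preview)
Your proposal is correct and follows exactly the approach the paper takes: the corollary is presented there as an immediate reformulation of Theorem~\ref{thm:x} in the language of Definition~\ref{de:hc}, with no separate proof given. Your write-up simply makes explicit the dictionary (Eq.~\eqref{eq:lq}$\,\leftrightarrow\,$Eq.~\eqref{eq:lz}, Eq.~\eqref{eq:lu}$\,\leftrightarrow\,$Eq.~\eqref{eq:lm}) and correctly invokes Lemma~\ref{lem:HO} to upgrade ``weak $O$-operator'' to ``$O$-operator'' under $S$-adjoint-admissibility; note only that the paper's statement writes $Ad^*$ where $ad^*$ (as in Theorem~\ref{thm:x} and Lemma~\ref{lem:HO}) is meant.
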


 \begin{lem}[\cite{Rp}]\label{lem:ri} Let $(\mathcal{A},[~,~])$ be a mock-Lie algebra, $(V,\rho)$ be a representation of $(\mathcal{A},[~,~])$ and $T:V\longrightarrow \mathcal{A}$ be a linear map which is identified to an element in $(\mathcal{A}\ltimes_{\rho^*}V^*)\otimes (\mathcal{A}\ltimes_{\rho^*}V^*)$ through the isomorphism  $Hom(V,\mathcal{A})\cong\mathcal{A}\otimes V^*\subseteq (\mathcal{A}\ltimes_{\rho^*}V^*)\otimes (\mathcal{A}\ltimes_{\rho^*}V^*)$. Then $r=T-\tau(T)$ is an anti-symmetric solution of the mLYBe in the semi-direct product mock-Lie algebra $\mathcal{A}\ltimes_{\rho^*}V^*$ if and only if $T$ is an $O$-operator of $(\mathcal{A},[~,~])$ associated to $(V,\rho)$.
 \end{lem}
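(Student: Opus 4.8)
The plan is to deduce the statement from Proposition~\ref{pro:gxc}, applied not to $\mathcal{A}$ but to the semidirect product mock-Lie algebra $\mathfrak{g}:=\mathcal{A}\ltimes_{\rho^*}V^*$. First I would fix a basis $\{e_i\}$ of $V$ with dual basis $\{e_i^*\}$ (everything below is independent of this choice; equivalently one may work through the canonical isomorphism $Hom(V,\mathcal{A})\cong\mathcal{A}\o V^*$) and write the element attached to $T$ as $r_T=\sum_i T(e_i)\o e_i^*\in\mathcal{A}\o V^*\subseteq\mathfrak{g}\o\mathfrak{g}$, so that $r=r_T-\tau(r_T)=-\tau(r)$ is automatically antisymmetric in $\mathfrak{g}\o\mathfrak{g}$. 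Since the mLYBe in $\mathfrak{g}$ is precisely Eq.~\eqref{eq:cybe} read in $\mathfrak{g}$, Proposition~\ref{pro:gxc} (for $\mathfrak{g}$) tells us that $r$ solves the mLYBe in $\mathfrak{g}$ if and only if the associated map $T_r:\mathfrak{g}^*\lr\mathfrak{g}$ from Eq.~\eqref{eq:nm} satisfies
\[
[T_r(\phi),T_r(\psi)]_{\mathfrak{g}}=T_r\big(ad^*_{\mathfrak{g}}(T_r(\phi))\psi+ad^*_{\mathfrak{g}}(T_r(\psi))\phi\big),\qquad\phi,\psi\in\mathfrak{g}^*.
\]

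Next I would make this identity explicit in terms of $T$. Writing $\mathfrak{g}^*=\mathcal{A}^*\oplus V$ (identifying $(V^*)^*$ with $V$), a direct computation from Eq.~\eqref{eq:nm} gives
\[
T_r(a^*+v)=T^*(a^*)-T(v),\qquad a^*\in\mathcal{A}^*,\ v\in V,
\]
where $T^*:\mathcal{A}^*\lr V^*$ is the transpose of $T$; moreover the bracket of $\mathfrak{g}$ has $[V^*,V^*]_{\mathfrak{g}}=0$, and the coadjoint action reads $ad^*_{\mathfrak{g}}(x)(b^*+w)=ad^*(x)b^*+\rho(x)w$ for $x\in\mathcal{A}$, while for $u^*\in V^*$ the element $ad^*_{\mathfrak{g}}(u^*)(b^*+w)\in\mathcal{A}^*$ is the functional $y\mapsto\langle\rho(y)w,u^*\rangle$ (all of this is immediate from the definition of the semidirect product and from Eq.~\eqref{eq:sr}). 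Substituting $\phi=a^*+v$, $\psi=b^*+w$ into the displayed identity and splitting both sides along $\mathfrak{g}=\mathcal{A}\oplus V^*$, the $\mathcal{A}$-component becomes exactly
\[
[T(v),T(w)]=T\big(\rho(T(v))w+\rho(T(w))v\big),
\]
i.e. the defining relation Eq.~\eqref{eq:lz} of an $O$-operator associated to $(V,\rho)$.

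It then remains to show that the $V^*$-component imposes nothing new. By bilinearity and the symmetry $\phi\leftrightarrow\psi$, the $V^*$-component is equivalent to a single identity in $V^*$ (take $w=0$, $a^*=0$), namely $-\rho^*(T(v))T^*(b^*)=-T^*(ad^*(T(v))b^*)+T^*\big(T^*(b^*)\diamond v\big)$, where $u^*\diamond v\in\mathcal{A}^*$ denotes the functional $y\mapsto\langle\rho(y)v,u^*\rangle$; pairing this with an arbitrary $u\in V$ and unwinding the definitions of $T^*$, $ad^*$ and $\rho^*$ collapses it once more to Eq.~\eqref{eq:lz}. Hence the displayed identity of Proposition~\ref{pro:gxc} holds if and only if Eq.~\eqref{eq:lz} does, i.e. if and only if $T$ is an $O$-operator of $(\mathcal{A},[~,~])$ associated to $(V,\rho)$, which is the claim.

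The step I expect to be the main obstacle is the bookkeeping of the last two paragraphs: cleanly splitting $\mathfrak{g}$ and $\mathfrak{g}^*$ into their $\mathcal{A}$/$V^*$ (resp.\ $\mathcal{A}^*$/$V$) summands, keeping track of which terms drop out (e.g. $[V^*,V^*]_{\mathfrak{g}}=0$ and $ad^*_{\mathfrak{g}}(u^*)b^*=0$), and verifying that the $V^*$-component contributes nothing beyond Eq.~\eqref{eq:lz}. A fully self-contained variant avoids $\mathfrak{g}^*$ altogether: expand $[r_{12},r_{13}]+[r_{13},r_{23}]-[r_{12},r_{23}]$ directly in $\mathfrak{g}^{\o 3}$ --- since $r$ lies in $(\mathcal{A}\o V^*)\oplus(V^*\o\mathcal{A})$ only the component types $(\mathcal{A},V^*,V^*)$, $(V^*,\mathcal{A},V^*)$, $(V^*,V^*,\mathcal{A})$ occur --- and check that each of the three components vanishes if and only if Eq.~\eqref{eq:lz} holds; this is the same computation organized by tensor slot rather than by pairing.
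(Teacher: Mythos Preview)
The paper does not supply its own proof of this lemma: it is stated with a citation to \cite{Rp} and used as a black box. So there is no ``paper proof'' to compare against. Your argument is correct and is essentially the standard one. Applying Proposition~\ref{pro:gxc} to $\mathfrak{g}=\mathcal{A}\ltimes_{\rho^*}V^*$, computing $T_r(a^*+v)=T^*(a^*)-T(v)$, and then projecting the operator identity onto the $\mathcal{A}$- and $V^*$-summands is exactly how this equivalence is established in the literature; your identification of the coadjoint pieces $ad^*_{\mathfrak{g}}(x)$ and $ad^*_{\mathfrak{g}}(u^*)$ and the observation $[V^*,V^*]_{\mathfrak{g}}=0$ are all correct.

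One minor comment on the presentation of the $V^*$-component: your phrase ``by bilinearity and the symmetry $\phi\leftrightarrow\psi$'' is a slight shortcut. What actually happens is that in the general case the $V^*$-component splits cleanly into a sum of two terms, one bilinear in $(v,b^*)$ and one bilinear in $(w,a^*)$, with no cross terms; each of these is an instance of your special case $w=0$, $a^*=0$ (resp.\ $v=0$, $b^*=0$), and pairing with $u\in V$ indeed reproduces Eq.~\eqref{eq:lz}. So the reduction is valid, but it would read more cleanly if you stated this decomposition explicitly rather than invoking ``symmetry''. The alternative route you sketch at the end --- expanding $[r_{12},r_{13}]+[r_{13},r_{23}]-[r_{12},r_{23}]$ directly in $\mathfrak{g}^{\otimes 3}$ and checking the three nonzero tensor types --- is equally valid and perhaps slightly more transparent for a reader who has not internalized Proposition~\ref{pro:gxc}.
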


 \begin{thm}\label{thm:fv} Let $(\mathcal{A},[~,~],N)$ be a Nijenhuis mock-Lie algebra, $(V,\rho)$ be a representation of $(\mathcal{A},[~,~])$, $S : \mathcal{A}\longrightarrow \mathcal{A}$ and $\alpha, \beta:V\longrightarrow V$ be linear maps. Then the following conditions are equivalent.
 \begin{enumerate}[(1)]
 \item \label{it:e2} There is a Nijenhuis mock-Lie algebra $(\mathcal{A} \ltimes_\rho V, N+\alpha)$ such that the linear map $S+\beta$ on $\mathcal{A}\oplus V$ is adjoint-admissible to $(\mathcal{A} \ltimes_\rho V, N+\alpha)$.
 \item \label{it:e3} There is a Nijenhuis mock-Lie algebra $(\mathcal{A} \ltimes_{\rho^*} V^*, N+\beta^*)$ such that the linear map $S+\alpha^*$ on $\mathcal{A}\oplus V^*$ is adjoint-admissible to $(\mathcal{A} \ltimes_{\rho^*} V^*, N+\beta^*)$.
 \item \label{it:e1} The following conditions are satisfied:
 \begin{enumerate}[(a)]
 \item \label{it:a1} $(V, \rho, \alpha)$ is a representation of $(\mathcal{A}, [~,~], N)$, that is, Eq.(\ref{eq:1}) holds.
 \item \label{it:a2} $S$ is adjoint-admissible to $(\mathcal{A}, [~,~], N)$, that is, Eq.(\ref{eq:sfh}) holds.
 \item \label{it:a3} $\beta$ is admissible to $(\mathcal{A}, [~,~], N)$ on $(V, \rho)$, that is, Eq.(\ref{eq:fb}) holds.
 \item \label{it:a4} The following equation holds:
 \begin{eqnarray}\label{eq:hjl}
 &\beta(\rho(y)\alpha(u))+\rho(S^2(y))u=\rho(S(y))\alpha(u)+\beta(\rho(S(y))u), \forall y\in \mathcal{A}, u\in V.&
 \end{eqnarray}
 \end{enumerate}
 \end{enumerate}
 \end{thm}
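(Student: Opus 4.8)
The plan is to prove the two equivalences $(1)\Leftrightarrow(3)$ and $(2)\Leftrightarrow(3)$, which together give the statement. In both cases the ``is a Nijenhuis mock-Lie algebra'' clause for the semidirect product will be disposed of by Proposition \ref{pro:B} (together with Lemma \ref{lem:HO} on the dual side), while the ``adjoint-admissible'' clause will be extracted by writing out Eq.~\eqref{eq:sfh} on the relevant semidirect product, expanding the bracket $[~,~]_\star$ via \eqref{eq:m} and the operator via \eqref{eq:d}, and separating the result into its $\mathcal{A}$-valued and $V$-valued (resp.\ $V^*$-valued) components.

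For $(1)\Leftrightarrow(3)$: since $(V,\rho)$ is already a representation of $(\mathcal{A},[~,~])$ and $(\mathcal{A},[~,~],N)$ is already Nijenhuis, the computation carried out in the proof of Proposition \ref{pro:B} shows that $(\mathcal{A}\ltimes_\rho V,[~,~]_\star,N+\alpha)$ is a Nijenhuis mock-Lie algebra if and only if Eq.~\eqref{eq:1} holds, that is, if and only if (a) holds. Granting (a), I would write Eq.~\eqref{eq:sfh} for the operator $S+\beta$ on $\mathcal{A}\ltimes_\rho V$ evaluated at $x+u$ and $y+v$. The $\mathcal{A}$-component reduces to Eq.~\eqref{eq:sfh} for $S$ on $(\mathcal{A},[~,~],N)$, i.e.\ (b). The $V$-component is linear in $(u,v)$, and every term carrying $v$ depends only on $x$ while every term carrying $u$ depends only on $y$, so it separates: the $v$-part is exactly Eq.~\eqref{eq:fb} for $\beta$ on $(V,\rho)$, i.e.\ (c), and the $u$-part is exactly Eq.~\eqref{eq:hjl}, i.e.\ (d). Conversely, (b), (c) and (d) together reassemble the full identity. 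Hence $(1)$ is equivalent to the conjunction (a)$\wedge$(b)$\wedge$(c)$\wedge$(d), which is $(3)$.

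For $(2)\Leftrightarrow(3)$: the argument is the precise dual. By Proposition \ref{pro:B} applied to the triple $(V^*,\rho^*,\beta^*)$, the semidirect product $(\mathcal{A}\ltimes_{\rho^*}V^*,[~,~]_\star,N+\beta^*)$ is a Nijenhuis mock-Lie algebra if and only if $(V^*,\rho^*,\beta^*)$ is a representation of $(\mathcal{A},[~,~],N)$, which by Lemma \ref{lem:HO} is equivalent to Eq.~\eqref{eq:fb}, i.e.\ to (c). Granting (c), I would expand Eq.~\eqref{eq:sfh} for $S+\alpha^*$ on $\mathcal{A}\ltimes_{\rho^*}V^*$ in the same way. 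The $\mathcal{A}$-component is again Eq.~\eqref{eq:sfh} for $S$, i.e.\ (b), and the $V^*$-component separates into a ``$v^*$-part'' and a ``$u^*$-part''. Pairing each of these with an arbitrary $v\in V$ and using \eqref{eq:sr} with the defining property of the transpose, the $v^*$-part, which asserts that $\alpha^*$ is admissible to $(\mathcal{A},[~,~],N)$ on $(V^*,\rho^*)$, dualizes to Eq.~\eqref{eq:1}, i.e.\ (a), while the $u^*$-part, which is Eq.~\eqref{eq:hjl} with $(\rho,\alpha,\beta)$ replaced by $(\rho^*,\beta^*,\alpha^*)$, dualizes back to Eq.~\eqref{eq:hjl}, i.e.\ (d). Hence $(2)$ is equivalent to $(3)$.

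The core of the work is routine multilinear bookkeeping, so I do not expect a serious obstacle. The two points needing care are: first, keeping track of which of the two arguments each term of the $V$-component depends on, so that the split into the (c)-part and the (d)-part is forced rather than merely assumed; and second, on the dual side, checking that the transpose of Eq.~\eqref{eq:hjl} written with $(\rho^*,\beta^*,\alpha^*)$ is Eq.~\eqref{eq:hjl} itself, and that the transpose of the admissibility condition for $\alpha^*$ on $(V^*,\rho^*)$ is Eq.~\eqref{eq:1}.
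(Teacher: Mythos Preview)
Your proposal is correct and follows essentially the same route as the paper: for $(1)\Leftrightarrow(3)$ you use Proposition~\ref{pro:B} to identify the Nijenhuis condition with (a), then expand Eq.~\eqref{eq:sfh} for $S+\beta$ on the semidirect product and separate it into the $\mathcal{A}$-component (b) and the $V$-component, which by setting $u=0$ or $v=0$ yields (c) and (d) respectively---exactly as the paper does. For $(2)\Leftrightarrow(3)$ the paper streamlines slightly by substituting $(V,\rho,\alpha,\beta)\mapsto(V^*,\rho^*,\beta^*,\alpha^*)$ directly into the already-proven equivalence $(1)\Leftrightarrow(3)$ and then only checking that the substituted version of \eqref{eq:hjl} dualizes back to \eqref{eq:hjl}, whereas you re-expand from scratch; but this amounts to the same computation, and your explicit remarks about the dualizations (substituted (a) $\leftrightarrow$ original (c) via Lemma~\ref{lem:HO}, substituted (c) $\leftrightarrow$ original (a)) fill in steps the paper leaves implicit.
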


 \begin{proof} $\ref{it:e2}\Longleftrightarrow\ref{it:e1}$ By Proposition \mref{pro:B}, we have $(\mathcal{A} \ltimes_\rho V,[~,~]_\star, N+\alpha)$ is a Nijenhuis mock-Lie algebra if and only if $(V, \rho, \alpha)$ is a representation of $(\mathcal{A},[~,~],N)$. Let $x,y\in \mathcal{A}$ and $u,v\in V$, then we have
 \begin{eqnarray*}
 (S+\beta)([(N+\alpha)(x+u),y+v])
 &\stackrel{(\ref{eq:d})(\ref{eq:m})}{=}&(S+\beta)([N(x),y]+\rho(N(x))v+\rho(y)\alpha(u))\\
 &\stackrel{(\ref{eq:d})}{=}&S([N(x),y])+\beta(\rho(N(x))v+\rho(y)\alpha(u)),
 \end{eqnarray*}
 \begin{eqnarray*}
 [x+u,(S+\beta)^2(y+v)]\stackrel{(\ref{eq:d}(\ref{eq:m})}{=}[x,S^2(y)]+\rho(x)\beta^2(v)+\rho(S^2(y))u,
 \end{eqnarray*}
 \begin{eqnarray*}
 [(N+\alpha)(x+u),(S+\beta)(y+v)]\stackrel{(\ref{eq:d})(\ref{eq:m})}{=}[N(x),S(y)]+\rho(N(x))\beta(v)+\rho(S(y))\alpha(u),
 \end{eqnarray*}
 \begin{eqnarray*}
 (S+\beta)([x+u,(S+\beta)(y+v)])
 \stackrel{(\ref{eq:d})(\ref{eq:m})}{=}S([x,S(y)])+\beta(\rho(x)\beta(v)+\rho(S(y))u).
 \end{eqnarray*}
 We obtain that
 \begin{eqnarray}\label{eq:cbv}
 &&S([N(x),y])+\beta(\rho(N(x))v+\rho(y)\alpha(u))+[x,S^2(y)]+\rho(x)\beta^2(v)+\rho(S^2(y))u-[N(x),S(y)]\nonumber \\
 &&-\rho(N(x))\beta(v)-\rho(S(y))\alpha(u)-S([x,S(y)])-\beta(\rho(x)\beta(v)-\rho(S(y))u)=0.
 \end{eqnarray}
 Then, Eq.(\ref{eq:cbv}) holds if and only if Eq.(\ref{eq:sfh}) (corresponding to $u=v=0$), Eq.(\ref{eq:fb}) (corresponding to $u=y=0$) and Eq.(\ref{eq:hjl}) (corresponding to  $x=v=0$) hold. Hence, \ref{it:e2} holds if and only if \ref{it:e1} holds.

 $\ref{it:e3}\Longleftrightarrow\ref{it:e1}$ In \ref{it:e2}, take
 \begin{eqnarray*}
 V=V^*, \rho=\rho^*, \alpha=\beta^*, \beta=\alpha^*.
 \end{eqnarray*}
 Then from the above equivalence between \ref{it:e2} and \ref{it:e1}, we have \ref{it:e3} holds if and only if \ref{it:a1}-\ref{it:a3} hold and the following equation holds:
 \begin{eqnarray}\label{eq:cse}
 \alpha^*(\rho^*(y)\beta^*(v^*))+\rho^*(S^2(y))v^*-\rho^*(S(y))\beta^*(v^*)-\alpha^*(\rho^*(S(y))v^*)=0, \forall y\in \mathcal{A}, v^*\in V^*.
 \end{eqnarray}
 For all $y \in \mathcal{A}$, $u\in V$ and $v^* \in V^*$, we have
 \begin{eqnarray*}
 \langle\alpha^*(\rho^*(y)\beta^*(v^*)),u\rangle \stackrel{(\ref{eq:sr})}{=}\langle\rho^*(y)\beta^*(v^*),\alpha(u)\rangle=\langle\beta^*(v^*),\rho(y)\alpha(u)\rangle=\langle v^*,\beta(\rho(y)\alpha(u))\rangle,
 \end{eqnarray*}
 \begin{eqnarray*}
 \langle\rho^*(S^2(y))v^*,u\rangle\stackrel {(\ref{eq:sr})}{=}\langle v^*,\rho(S(y))u\rangle,
 \end{eqnarray*}
 \begin{eqnarray*}
 \langle\rho^*(S(y))\beta^*(v^*),u\rangle\stackrel {(\ref{eq:sr})}{=}\langle\beta^*(v^*),\rho(S(y)u)\rangle=\langle v^*,\beta(\rho(S(y)u))\rangle,
 \end{eqnarray*}
 \begin{eqnarray*}
 \langle\alpha^*\rho^*(S(y))v^*),u\rangle\stackrel {(\ref{eq:sr})}{=}\langle\rho^*S((y))v^*,\alpha(u)\rangle=\langle v^*,\rho(S((y))\alpha(u)\rangle.
 \end{eqnarray*}
 One can get that
 \begin{eqnarray*}
 &&\hspace{-16mm}\langle\alpha^*(\rho^*(y)\beta^*(v^*))+\rho^*(S^2(y))v^*-\rho^*(S(y))\beta^*(v^*)-\alpha^*(\rho^*(S(y))v^*),u\rangle\\
 &\stackrel {(\ref{eq:sr})}{=}&\langle v^*,\beta (\rho(y)\alpha(u))+\rho(S^2(y))u-\rho(S(y))\alpha(u)-\beta(\rho(S(y))u)\rangle.
 \end{eqnarray*}
 Hence, Eq.(\ref{eq:cse}) holds if and only if Eq.(\ref{eq:hjl}) holds. Therefore, \ref{it:e3} holds if and only if \ref{it:e1} holds.
 \end{proof}

 \begin{thm}\label{thm:hnv} Let $(\mathcal{A},[~,~],N)$ be a Nijenhuis mock-Lie algebra,  $(V,\rho)$ be a representation of $(\mathfrak{\mathcal{A}},[~,~])$, $(V^*, \rho^*,\beta^*)$ be a representation of $(\mathcal{A},[~,~],N)$, $S:\mathcal{A}\longrightarrow \mathcal{A}$, $\alpha:V\longrightarrow V$ and $T:V\longrightarrow \mathcal{A}$ be linear maps.
 \begin{enumerate}[(1)]
 \item \label{it:xn} $r=T-\tau(T)$ is an anti-symmetric solution of the $(S+\alpha^*)$-admissible mLYBe in the Nijenhuis mock-Lie algebra $(\mathcal{A}\ltimes_{\rho^*}V^*, N+\beta^*)$ if and only if $T$ is a weak $O$-operator associated to $(V,\rho)$ and $\alpha$, and satisfies $T\circ \beta=S\circ T$.
 \item \label{it:r6} Assume that $(V,\rho,\alpha)$ is a representation of $(\mathcal{A},[~,~],N)$. If $T$ is a weak $O$-operator associated to $(V,\rho,\alpha)$ and $T\circ \beta=S\circ T$, then $r=T-\tau(T)$ is an anti-symmetric solution of the $(S+\alpha^*)$-admissible mLYBe in the Nijenhuis mock-Lie algebra $(\mathfrak{\mathcal{A}}\ltimes_{\rho^*}V^*, N+\beta^*)$. In addition, if $(\mathcal{A},[~,~],N)$ is  $S$-adjoint-admissible and Eq.(\ref{eq:hjl}) holds such that the Nijenhuis mock-Lie algebra $(\mathcal{A}\ltimes_{\rho^*}V^*, N+\beta^*)$ is $(S+\alpha^*)$-adjoint-admissible, then there is a Nijenhuis mock-Lie bialgebra $(\mathcal{A}\ltimes_{\rho^*}V^*, N+\beta^*,\D, S+\alpha^*)$, where the linear map $\D=\D_r$ is defined by Eq.(\ref{eq:cop}) with $r=T-\tau(T)$.
 \end{enumerate}
 \end{thm}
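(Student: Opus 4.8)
The plan is to expand both sides of the claimed equivalence in \ref{it:xn}, match the $(S+\alpha^*)$-admissible mLYBe in $(\mathcal{A}\ltimes_{\rho^*}V^*, N+\beta^*)$ with the three conditions on $T$ component by component, and then obtain \ref{it:r6} by feeding the resulting anti-symmetric solution into the machinery already built up. First I would fix the identification $Hom(V,\mathcal{A})\cong\mathcal{A}\o V^*\subseteq(\mathcal{A}\ltimes_{\rho^*}V^*)^{\o 2}$ and write $T=\sum_i a_i\o v_i^*$, so that $T(v)=\sum_i\langle v_i^*,v\rangle a_i$ and $r=T-\tau(T)=\sum_i(a_i\o v_i^*-v_i^*\o a_i)$; this $r$ is automatically anti-symmetric. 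By Definition \ref{de:nj}, the $(S+\alpha^*)$-admissible mLYBe in the Nijenhuis mock-Lie algebra $(\mathcal{A}\ltimes_{\rho^*}V^*, N+\beta^*)$ is the system consisting of Eq.\eqref{eq:uj} for $r$ in the mock-Lie algebra $\mathcal{A}\ltimes_{\rho^*}V^*$, together with $((N+\beta^*)\o\id-\id\o(S+\alpha^*))(r)=0$ and $((S+\alpha^*)\o\id-\id\o(N+\beta^*))(r)=0$; since $r=-\tau(r)$, the observation following Corollary \ref{cor:gm} lets me drop one of the last two equations.

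Next I would treat the two surviving components separately. For the mLYBe component, Lemma \ref{lem:ri} shows that $r=T-\tau(T)$ solves Eq.\eqref{eq:uj} in $\mathcal{A}\ltimes_{\rho^*}V^*$ if and only if $T$ is an $O$-operator of $(\mathcal{A},[~,~])$ associated to $(V,\rho)$, i.e. Eq.\eqref{eq:lz} holds. For the other component, using that $N+\beta^*$ restricts to $N$ on $\mathcal{A}$ and to $\beta^*$ on $V^*$ (and $S+\alpha^*$ to $S$ and $\alpha^*$), a direct computation gives
\[
((N+\beta^*)\o\id-\id\o(S+\alpha^*))(r)=\sum_i\big(N(a_i)\o v_i^*-a_i\o\alpha^*(v_i^*)\big)+\sum_i\big(v_i^*\o S(a_i)-\beta^*(v_i^*)\o a_i\big),
\]
where the first sum lies in $\mathcal{A}\o V^*$ and the second in $V^*\o\mathcal{A}$, two complementary summands of $(\mathcal{A}\oplus V^*)^{\o 2}$. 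Hence the identity holds iff both sums vanish; transporting the first back through $\mathcal{A}\o V^*\cong Hom(V,\mathcal{A})$ identifies it with $N\circ T-T\circ\alpha$, so its vanishing is exactly Eq.\eqref{eq:lm}, while applying $\tau$ to the second and doing likewise identifies it with $S\circ T-T\circ\beta$, so its vanishing is exactly $T\circ\beta=S\circ T$. Assembling the pieces, $r=T-\tau(T)$ is an anti-symmetric solution of the $(S+\alpha^*)$-admissible mLYBe in $(\mathcal{A}\ltimes_{\rho^*}V^*, N+\beta^*)$ precisely when Eqs.\eqref{eq:lz}, \eqref{eq:lm} hold together with $T\circ\beta=S\circ T$, that is (Definition \ref{de:hc}) precisely when $T$ is a weak $O$-operator associated to $(V,\rho)$ and $\alpha$ satisfying $T\circ\beta=S\circ T$. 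This settles \ref{it:xn}.

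For \ref{it:r6}, I would note that if $(V,\rho,\alpha)$ is a representation of $(\mathcal{A},[~,~],N)$, then a weak $O$-operator associated to $(V,\rho,\alpha)$ is a fortiori a weak $O$-operator associated to $(V,\rho)$ and $\alpha$, so the first assertion is immediate from \ref{it:xn}. For the final statement I would first invoke Proposition \ref{pro:B}: since $(V^*,\rho^*,\beta^*)$ is a representation of $(\mathcal{A},[~,~],N)$ (equivalently, by Lemma \ref{lem:HO}, $\beta$ is admissible to $(\mathcal{A},[~,~],N)$ on $(V,\rho)$), the pair $(\mathcal{A}\ltimes_{\rho^*}V^*, N+\beta^*)$ is a genuine Nijenhuis mock-Lie algebra. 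Combining this with $(V,\rho,\alpha)$ being a representation of $(\mathcal{A},[~,~],N)$, with $S$ adjoint-admissible to $(\mathcal{A},[~,~],N)$, and with Eq.\eqref{eq:hjl}, Theorem \ref{thm:fv} yields that $S+\alpha^*$ is adjoint-admissible to $(\mathcal{A}\ltimes_{\rho^*}V^*, N+\beta^*)$. Then Proposition \ref{pro:gth}, applied to the $(S+\alpha^*)$-adjoint-admissible Nijenhuis mock-Lie algebra $(\mathcal{A}\ltimes_{\rho^*}V^*, N+\beta^*)$ and the anti-symmetric solution $r=T-\tau(T)$ of the $(S+\alpha^*)$-admissible mLYBe, produces the coboundary Nijenhuis mock-Lie bialgebra $(\mathcal{A}\ltimes_{\rho^*}V^*, N+\beta^*, \D_r, S+\alpha^*)$ with $\D_r$ given by Eq.\eqref{eq:cop}.

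I expect the main obstacle to be the bookkeeping in \ref{it:xn}: one must correctly identify $T:V\to\mathcal{A}$ with an element of $(\mathcal{A}\ltimes_{\rho^*}V^*)^{\o 2}$, track how $N+\beta^*$ and $S+\alpha^*$ act slot by slot on $r$, and then separate the resulting tensor identity into its $\mathcal{A}\o V^*$ and $V^*\o\mathcal{A}$ components — the latter requiring a $\tau$-flip before it can be read as a condition on $T$. Once that decomposition is in place, everything else is a matter of invoking Lemmas \ref{lem:ri} and \ref{lem:HO}, Proposition \ref{pro:B}, Theorem \ref{thm:fv}, and Proposition \ref{pro:gth}.
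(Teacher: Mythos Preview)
Your proposal is correct and follows essentially the same approach as the paper: you invoke Lemma~\ref{lem:ri} for the mLYBe component, then decompose $((N+\beta^*)\otimes\id-\id\otimes(S+\alpha^*))(r)$ into its $\mathcal{A}\otimes V^*$ and $V^*\otimes\mathcal{A}$ parts to extract $N\circ T=T\circ\alpha$ and $S\circ T=T\circ\beta$, exactly as the paper does (the paper works with a fixed basis $\{e_i\}$ of $V$ rather than a generic tensor $\sum_i a_i\otimes v_i^*$, but the computation is identical). Your treatment of \ref{it:r6} spells out explicitly the roles of Proposition~\ref{pro:B}, Theorem~\ref{thm:fv}, and Proposition~\ref{pro:gth}, which the paper compresses into the single line ``It follows from \ref{it:xn} and Theorem~\ref{thm:fv}'', but the logic is the same.
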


 \begin{proof} \ref{it:xn} By Lemma \mref{lem:ri}, $r$ satisfies Eq.(\ref{eq:uj}) if and only if Eq.(\ref{eq:lz}) holds. Let $\{e_1,e_2,\cdots, e_n\}$ be a basis of $V$ and $\{e_1^*,e_2^*,\cdots, e_n^*\}$ be the dual basis. Then $T=\sum_{i=1}^n T(e_i)\otimes e_i^*\in (\mathcal{A}\ltimes_{\rho^*}V^*)\otimes \mathcal{A}\ltimes_{\rho^*}V^*$. Hence
 \begin{eqnarray*}
 r=T-\tau(T)=\sum_{i=1}^n (T(e_i)\otimes e_i^*-e_i^*\otimes T(e_i).
 \end{eqnarray*}
 Note that
 \begin{eqnarray*}
 ((N+\beta^*)\otimes \id)(r)=\sum\limits_{i=1}^{n}(N(T(e_i))\otimes e_i^*-\beta^*(e_i^*)\otimes T(e_i)),
 \end{eqnarray*}
 \begin{eqnarray*}
 (\id\otimes(S+\alpha^*))(r)=\sum\limits_{i=1}^{n}(T(e_i)\otimes \alpha(e_i^*)-e_i^*\otimes S(T(e_i))).
 \end{eqnarray*}
 Further,
 \begin{eqnarray*}
 \sum_{i=1}^n \beta^*(e_i^*)\otimes T(e_i)
 &=&\sum_{i=1}^n \sum_{j=1}^n \langle\beta^*(e_i^*),e_j\rangle e_j^*\otimes T(e_i)\\
 &=&\sum_{j=1}^n e_j^*\otimes  \sum_{i=1}^n \langle e_i^*,\beta(e_j)\rangle T(e_i)\\
 &=&\sum_{i=1}^n e_i^*\otimes T(\sum_{i=1}^n\langle\beta(e_i),e_j^*\rangle e_j)=\sum_{i=1}^n e_i^*\otimes T(\beta(e_i)),
 \end{eqnarray*}
 and similarly, $\sum_{i=1}^n T(e_i)\otimes \alpha(e_i^*)=\sum_{i=1}^n T(\alpha(e_i))\otimes e_i^*$. Therefore $((N+\beta^*)\otimes \id)(r)=(\id\otimes(S+\alpha^*))(r)$ if and only if $N\circ T=T\circ \alpha$ and $S\circ T=T\circ \beta$. Hence the conclusion follows.
 
 \ref{it:r6} It follows from \ref{it:xn} and Theorem \mref{thm:fv}.
 \end{proof}

\end{document}